\numberwithin{equation}{section}
\begin{document}

\title[Preserver problems for the logics associated to Hilbert spaces]
{Preserver problems for the logics associated to Hilbert spaces and related Grassmannians}
\author{Mark Pankov}
\subjclass[2010]{15A86, 06C15, 81P10}
\keywords{standard quantum logic, Hilbert Grassmannian, compatibility relation}
\address{Mark Pankov: Faculty of Mathematics and Computer Science, 
University of Warmia and Mazury, Olsztyn, Poland}
\email{pankov@matman.uwm.edu.pl}

\begin{abstract}
We consider the standard quantum logic ${\mathcal L}(H)$ associated to a complex Hilbert space $H$, i.e.
the  lattice of closed subspaces of $H$ together with the orthogonal complementation. 
The orthogonality and compatibility relations are defined for any logic.
In the standard quantum logic, they have a simple interpretation in terms of operator theory.
For example, two closed subspaces (propositions in the logic ${\mathcal L}(H)$) are compatible if and only if 
the projections on these subspaces commute. 
We present both classical and more resent results on transformations of ${\mathcal L}(H)$ and the associated Grassmannians
which preserve the orthogonality or compatibility relation.
The first result in this direction was classical Wigner's theorem.
\end{abstract}

\newtheorem{theorem}{Theorem}[section]
\newtheorem{lemma}{Lemma}[section]
\newtheorem{prop}{Proposition}[section]
\newtheorem{rem}{Remark}[section]
\newtheorem{exmp}{Example}[section]
\newtheorem{prob}{Problem}[section]
\newtheorem{cor}{Corollary}[section]

\maketitle

\section{Introduction}
A {\it logic} is a lattice together with an addition operation known as {\it negation} and satisfying some axioms,
elements of a logic are called {\it propositions}.
By G. Birkhoff and J. Von Neumann \cite{B-VonN}, the logical structure of quantum mechanics corresponds to
the logic whose propositions are subspaces of a finite-dimensional Hilbert space and whose negation is the orthogonal complementation.
Currently,  the {\it standard quantum logic} is the logic formed by closed subspaces of an arbitrary (not necessarily finite-dimensional) complex Hilbert space,
as above, the negation is the orthogonal complementation.
We strongly recommend the short problem book \cite{Cohen} as a quick introduction to the topic and \cite{HB-QL,Pykacz,Var} for more information.
The readers can ask whether interesting logics related to a more general class of vector spaces?
We only note that any logic formed by all closed subspaces of an infinite-dimensional complex Banach space is the standard quantum logic 
(S. Kakutani and G. W. Mackey \cite{KakutaniMackey}).

\subsection{Lattices of closed subspaces}
The lattice consisting of all subspaces of a vector space (not necessarily finite-dimensional) is well-studied.
See, for example, \cite{Baer}.
By the Fundamental Theorem of Projective Geometry \cite[Section III.1, p.44]{Baer},
every automorphism of this lattice is induced by a semilinear automorphism of the corresponding vector space
if the dimension of the vector space is not less than $3$.
For a $2$-dimensional vector space any bijective transformation of the set of $1$-dimensional subspaces gives a lattice automorphism.

Similarly,
every automorphism of the lattice of closed subspaces of an infinite-dimensional normed vector space 
is induced by an invertible bounded linear or conjugate-linear operator
(the second possibility is realized only for the complex case).
For finite-dimensional complex normed spaces this statement fails, since every subspace is closed and 
there are non-bounded semilinear automorphisms associated to non-continuous automorphisms of the field of complex numbers.

In the infinite-dimensional case, 
the description of automorphisms for the lattice of closed subspaces is not a simple consequence of the Fundamental Theorem of Projective Geometry. 
It was first given by G. W. Mackey \cite{Mackey} for real normed spaces.
The complex case easily follows from one technical result obtained by S. Kakutani and G. W. Mackey in \cite{KakutaniMackey}
(this fact is noted in \cite{FillmoreLongstaff}).

The lattice of closed subspaces of a Hilbert space $H$ can be decomposed into the disjoint sum of the following components called {\it Grassmannians}:
\begin{enumerate}
\item[$\bullet$] ${\mathcal G}_{k}(H)$ consisting of all $k$-dimensional subspaces 
and ${\mathcal G}^{k}(H)$ consisting of all closed subspaces of codimension $k$ for every natural $k<\dim H$.
Each ${\mathcal G}^{k}(H)$ can be identified with ${\mathcal G}_{k}(H)$ by the orthogonal complementation.
\item[$\bullet$] ${\mathcal G}_{\infty}(H)$ formed by all closed subspaces whose dimension and codimension both are infinite
(if $H$ is infinite-dimensional).
\end{enumerate}
The Grassmannian ${\mathcal G}_{\infty}(H)$ is partially ordered by the inclusion relation.
Every automorphism of this partially ordered set can be uniquely extended to an automorphism of the lattice of closed subspaces
(M. Pankov \cite{Pankov2, Pankov3}).

\subsection{Automorphisms of the logics associated to Hilbert spaces}
Let $H$ be a complex Hilbert space.
The inner product on $H$ and the associated norm will be denoted by $\langle\cdot,\cdot \rangle$ and $||\cdot ||$, respectively.
We use the standard symbol $\perp$ to denote the orthogonality relation and write $X^{\perp}$ for the orthogonal complement of $X$.

Consider the logic ${\mathcal L}(H)$ formed by closed subspaces of $H$.
Every automorphism of this logic satisfies the following conditions:
\begin{enumerate}
\item[$\bullet$] it is a lattice automorphism,
\item[$\bullet$] it preserves the orthogonal complementation.
\end{enumerate}
The first property implies that such an automorphism is induced by an invertible bounded linear or conjugate-linear operator if $H$ is infinite-dimensional,
and it is defined by a semilinear automorphism if the dimension of $H$ is finite and not less than $3$.
It is not difficult to prove that every semilinear automorphism sending orthogonal vectors to orthogonal vectors is unitary or anti-unitary up to a scalar multiply.
Therefore, the second property guarantees that every automorphism of the logic is induced by an unitary or anti-unitary operator 
if the dimension of $H$ is not less than $3$. 
The statement fails if the dimension is equal to $2$
(consider the set formed by all pairs of orthogonal $1$-dimensional subspaces, 
any bijective transformation of this set induces an automorphism of the logic).

In quantum mechanics, so-called {\it pure states} are identified with $1$-dimensional subspaces of $H$.
The {\it transition probability} between two pure states $P,P'\in {\mathcal G}_{1}(H)$ is equal to
$|\langle x,x' \rangle|$, where $x$ and $x'$ are unit vectors belonging to $P$ and $P'$, respectively.
Classical Wigner's theorem states that every bijective transformation of ${\mathcal G}_{1}(H)$
preserving the transition probability is induced by an unitary or anti-unitary operator.
This is one of basic results of the mathematical foundations of quantum mechanics. 
Note that there is an analogue of this statement for not necessarily bijective transformations \cite{Geher}.
Wigner's theorem can be extended on other Grassmannians.
See \cite{GeherSemrl, Molnar}, where transformations of Grassmannians preserving the principal angles and the gap metric are determined.
Transformations preserving structures related to quantum mechanics are also investigated in \cite[Chapter 2]{Molnar-book}.

Pairs of orthogonal elements from ${\mathcal G}_{1}(H)$ correspond to the case of zero transition probability.
U. Uhlhorn \cite{Uhlhorn} reproved Wigner's theorem in terms of quantum logic as follows.
Every bijective transformation of ${\mathcal G}_{1}(H)$ preserving the orthogonality relation in both directions
is induced by an unitary or anti-unitary operator.
The assumption that the dimension of $H$ is not less than $3$ cannot be omitted.
By M. Gy\"ory \cite{Gyory} and P. {\v S}emrl \cite{Semrl}, 
the same holds for orthogonality preserving (in both directions) bijective transformations of
${\mathcal G}_{\infty}(H)$ and ${\mathcal G}_{k}(H)$ if the dimension of $H$ is greater than $2k$.
In the case when the dimension is equal to $2k$, the statement fails
(as above, we can take any bijective transformation of the set of all pairs of orthogonal $k$-dimensional subspaces).
If the dimension is less than $2k$, then there exist no orthogonal pairs of $k$-dimensional subspaces.

It is not difficult to show that 
every bijective transformation of ${\mathcal G}_{\infty}(H)$ preserving the orthogonality relation in both directions is an automorphism of 
the corresponding partially ordered set.
Therefore, the mentioned above statement concerning orthogonality preserving transformations of ${\mathcal G}_{\infty}(H)$ 
can be obtained from the fact that every automorphism of this partially ordered set is extendable to an automorphism of the lattice of closed subspaces.
The proof of the same statement for ${\mathcal G}_{k}(H)$ will be based on Chow's theorem \cite{Chow}
which describes automorphisms of Grassmann graphs.

\subsection{Compatibility relation}
So-called {\it compatibility relation} is defined for any logic. 
In classical logics, any two propositions are compatible.
Two propositions $X$ and $Y$ in the logic ${\mathcal L}(H)$ are compatible 
if there are propositions $X',Y'$ such that $X\cap Y, X',Y'$ are mutually orthogonal and 
$$X=(X\cap Y)+X',\;\;\;Y=(X\cap Y)+Y'.$$
Any two incident or orthogonal elements of ${\mathcal L}(H)$ are compatible, 
i.e. the inclusion and orthogonal relations both are contained in the compatibility relation.

Every closed subspace of $H$ can be identified with the (orthogonal) projection on this subspace.
Two closed subspaces are compatible if and only if the corresponding projections commute.
This observation can be generalized as follows. 

An {\it observable} in quantum mechanics  is a measure $\mu$ defined on the $\sigma$-algebra of Borel subsets in ${\mathbb R}$
which takes values in the logic ${\mathcal L}(H)$ and such that $\mu (E),\mu(F)$ are orthogonal for any pair of disjoint Borel subsets $E,F$.
Two observables $\mu$ and $\lambda$ are called {\it compatible} if $\mu (E)$ and $\lambda (F)$ are compatible for any pair of Borel subsets $E,F$.
By the spectral theorem, there is a one-to-one correspondence between observables and self-adjoint operators on $H$.
Two observables are compatible if and only if the corresponding operators commute (von Neumann's theorem).

A set consisting of mutually compatible propositions will be called {\it compatible}.
For every orthogonal basis of $H$ any two closed subspaces spanned by subsets of this basis are compatible.
Every maximal compatible subset of ${\mathcal L}(H)$  consists of all closed subspaces spanned by subsets of a certain orthogonal basis for $H$.
The family of all such subsets coincides with the family of maximal classical logics contained in ${\mathcal L}(H)$.
The logic ${\mathcal L}(H)$ together with the family of maximal classical sublogics is a structure similar 
to the Tits buildings of general linear groups.
A {\it building} is a combinatoric construction defined for any group admitting so-called ${\rm BN}$-pair.
This is an abstract simplicial complex together with a family of distinguished subcomplexes called {\it apartments},
see \cite{Tits}.
The building for the group ${\rm GL}_{n}({\mathbb C})$ is formed by all subspaces of ${\mathbb C}^{n}$
and every apartment is formed by all subspaces spanned by subsets of a certain basis for ${\mathbb C}^{n}$.
Maximal compatible subsets of the logic ${\mathcal L}({\mathbb C}^{n})$ correspond to the apartments defined by orthogonal bases.

The automorphism group of the logic ${\mathcal L}(H)$ is a proper subgroup in
the group of all bijective transformations preserving the compatibility relation in both directions.
Consider, for example, the orthogonal complementation $X\to X^{\perp}$ or 
any transformation which transposes some $X\in {\mathcal L}(H)$ with $X^{\perp}$ and leaves fixed all other elements. 
By L. Moln\'ar and P. \v{S}emrl \cite{MolnarSemrl}, if $f$ is a bijective transformation of ${\mathcal L}(H)$
preserving the compatibility relation in both directions, then there is an automorphism $g$ of the logic ${\mathcal L}(H)$
such that for every $X\in {\mathcal L}(H)$ we have either 
$$f(X)=g(X)\;\mbox{ or }\;f(X)=g(X)^{\perp}.$$
The same holds for bijective transformations of ${\mathcal G}_{\infty}(H)$ preserving the compatibility relation in both directions;
this is a simple modification of L. Plevnik's result  \cite{Plevnik}.
Bijective transformations of ${\mathcal G}_{k}(H)$ preserving the compatibility relation in both directions were described by M. Pankov \cite{Pankov4}. 
If $H$ is infinite-dimensional, then every such transformation can be uniquely extended to an automorphism of the logic ${\mathcal L}(H)$.
The  same statement also is proved for the case when the dimension of $H$ is finite and distinct from $2k$ (except one case of small dimension).
In the case when the dimension of $H$ is equal to $2k\ge 8$, there is a result similar to 
the description of compatibility preserving transformations of ${\mathcal G}_{\infty}(H)$.

It was noted above that the compatibility relation is closely connected to the concept of apartment.  
Let ${\mathcal G}_{k}(V)$ be the Grassmannian formed by $k$-dimensional subspaces of a vector space $V$.
For every basis of this vector space  the associated {\it apartment} of ${\mathcal G}_{k}(V)$ consists of all $k$-dimensional subspaces spanned by subsets of this basis.
In the case when $V$ is finite-dimensional, 
apartments of ${\mathcal G}_{k}(V)$ are the intersections of ${\mathcal G}_{k}(V)$  with apartments of  the building for the group ${\rm GL}(V)$.
By \cite{Pankov-book1}, every apartments preserving bijective transformation of ${\mathcal G}_{k}(V)$ 
is induced by a semilinear automorphism of $V$ or a semilinear isomorphisms of $V$ to the dual vector space $V^{*}$
(the second possibility can be realized only in the case when the dimension of $V$ is equal to $2k$).

Similarly, for every orthogonal basis of $H$ 
the associated {\it orthogonal apartment} of ${\mathcal G}_{k}(H)$ is formed by all $k$-dimensional subspaces spanned by subsets of this basis.
Orthogonal apartments can be characterized as maximal compatible subsets of ${\mathcal G}_{k}(H)$.
Therefore, a bijective transformation of ${\mathcal G}_{k}(H)$ preserves the compatibility relation in both directions if and only if 
it preserves the family of orthogonal apartments in both directions.
The description of such transformations is based on some modifications of the methods applied to apartments preserving transformations in \cite{Pankov-book1}.

The compatibility and orthogonality relations have simple interpretations in terms of operator theory. 
Let us identify every closed subspace with the projection on this subspace.
Since two closed subspaces are compatible if and only if the corresponding projections commute, 
compatibility preserving transformations of the standard quantum logic and the associated Grassmannians 
can be considered as commutativity preserving transformations of the corresponding sets of projections.
Similarly, two closed subspaces are orthogonal if and only if the composition of the associated projections is zero.
Therefore, our results concerning orthogonality and compatibility preserving transformations can be reformulated in terms of 
the discipline known as {\it preserver problems} on operator structures.
This area describes transformations of operator spaces (sometimes, as in our case, operator sets) which preserve various types of relations,
see \cite{Molnar-book}.

\section{Lattices of closed subspaces}
\subsection{Lattices}
Let us start from some general definitions.
Let $(X,\le)$ be a partially ordered set. Let also $Y$ be a subset of $X$.
An element $x\in X$ is an {\it upper bound} of $Y$ if $y\le x$ for every $y\in Y$.
An upper bound $x$ of $Y$ is said to be its {\it least upper bound} if $x\le x'$ for every upper bound $x'$ of $Y$.
Dually, $x\in X$ is a {\it lower bound} of $Y$ if $x\le y$ for every $y\in Y$.
We say that a lower bound $x$ of $Y$ is the {\it greatest lower bound} of $Y$ if $x'\le x$ for every lower bound $x'$ of $Y$. 

The partially ordered set $(X,\le)$ is called a {\it lattice} if for any two elements $x,y\in X$
the subset $\{x,y\}$ has the least upper bound denoted by $x\vee y$ and the greatest lower bound denoted by $x\wedge y$.
This lattice is {\it bounded} if it contain the {\it least} element $0$ and the {\it greatest} element $1$
satisfying $0\le x \le 1$ for every $x\in X$.

An {\it isomorphism} between partially ordered sets $(X,\le)$ and $(X',\le)$ is a bijective mapping $f:X\to X'$ preserving 
the order in both  directions, i.e.
$$x\le y\;\Longleftrightarrow\;f(x)\le f(y)$$
for all $x,y\in X$.
If our partially ordered sets are lattices and $f:X\to X'$ is an isomorphism between them,  then
$$f(x\vee y)=f(x)\vee f(y)\;\mbox{ and }\;f(x\wedge y)=f(x)\wedge f(x)$$
for all $x,y\in X$.
Isomorphisms of bounded lattices  preserve the least and greatest elements.

\subsection{Lattices of subspaces of vector spaces}
Let $V$ be a vector space over a field.
Denote by ${\mathcal L}(V)$ the set of all subspaces of $V$ which is partially ordered by the inclusion relation $\subseteq$.
This is a bounded lattice. 
For any two subspaces $X,Y$ the least upper bound $X\vee Y$ and the greatest lower bound $X\wedge Y$
coincide with the sum $X+Y$ and the intersection $X\cap Y$, respectively. 
The least element is $0$ and the greatest element is $V$.
In the case when $\dim V=1$, the lattice is trivial, i.e. it consists of the least element and the greatest element only.
For this reason, we will always suppose that $\dim V\ge 2$.
For every natural $k<\dim V$
the lattice ${\mathcal L}(V)$ contains the following two subsets:
\begin{enumerate}
\item[$\bullet$] the Grassmannian ${\mathcal G}_{k}(V)$ formed by $k$-dimensional subspaces,
\item[$\bullet$] the Grassmannian ${\mathcal G}^{k}(V)$ formed by subspaces of codimension $k$.
\end{enumerate}
If $\dim V=n$ is finite, then ${\mathcal G}^{k}(V)$ coincides with ${\mathcal G}_{n-k}(V)$.
In the case when $V$ is infinite-dimensional, there is also the subset ${\mathcal G}_{\infty}(V)$
consisting of all subspaces whose dimension and codimension both are infinite.
This subset is homogeneous, i.e. for any two elements of ${\mathcal G}_{\infty}(V)$
there is a linear automorphism of $V$ transferring one of them to the other, 
only in the case when the dimension of $V$ is the smallest infinite cardinal number $\aleph_{0}$.

Let $V$ and $V'$ be vector spaces over fields ${\mathbb F}$ and ${\mathbb F}'$, respectively. 
We say that a mapping $L:V\to V'$ is {\it semilinear} if 
$$L(x+y)=L(x)+L(y)$$
for all vectors $x,y\in V$ and there is an isomorphism $\sigma:{\mathbb F}\to {\mathbb F}'$ such that 
$$L(ax)=\sigma(a)L(x)$$
for every vector $x\in V$ and every scalar $a\in {\mathbb F}$.
This mapping is linear if the fields are coincident  and $\sigma$ is  identity. In the general case, it is said to be $\sigma$-{\it linear}.
There are semilinear mappings associated to non-surjective field homomorphisms \cite{FaureFrolicher-book,Pankov-book2},
but we do not consider them here.
Semilinear bijections are called {\it semilinear isomorphisms}.
Every semilinear isomorphism $L:V\to V'$ induces an isomorphism of the lattice ${\mathcal L}(V)$ to the lattice ${\mathcal L}(V')$.
Every non-zero scalar multiple of $L$ is a semilinear isomorphism 
which induces the same lattice isomorphism.
Conversely, if two semilinear isomorphisms define the same isomorphism between the lattices,
then one of them is a scalar multiple of the other.

Suppose that $L:V\to V'$ and $L':V\to V'$ are semilinear isomorphisms which induce 
the same bijection of ${\mathcal G}_{1}(V)$ to ${\mathcal G}_{1}(V')$, i.e. we have $L(P)=L'(P)$ for every $P\in {\mathcal G}_{1}(V)$.
Then for every non-zero vector $x\in V$ there is a scalar $a_{x}$ such that $$L'(x)=a_{x}L(x).$$
If $x,y\in V$ are linearly independent, then 
$$a_{x}L(x)+a_{y}L(y)=L'(x+y)=a_{x+y}(L(x)+L(y))$$
and $a_{x}=a_{x+y}=a_{y}$, since $L(x)$ and $L(y)$ are linearly independent.
If $y$ is a scalar multiple of $x$, then we take any vector $z\in V$ such that $x,z$ are linearly independent 
(this is possible, since $\dim V\ge 2$)
and establish that $a_{x}=a_{z}=a_{y}$. So, we have $a_{x}=a_{y}$ for any two non-zero vectors $x,y\in V$
which means that $L'$ is a scalar multiple of $L$.

In the case when $\dim V\ge 3$, every isomorphism between the lattices ${\mathcal L}(V)$ and ${\mathcal L}(V')$ is induced by a semilinear isomorphism.
This is a simple consequence of the Fundamental Theorem of Projective Geometry which will be given below.

For every $2$-dimensional subspace $S\subset V$
the set formed by all $1$-dimensional subspaces contained in $S$, i.e. ${\mathcal G}_{1}(S)$, is called a {\it line} of ${\mathcal G}_{1}(V)$.
In the case when $\dim V\ge 3$, the Grassmannian ${\mathcal G}_{1}(V)$ together with all such lines is known as the {\it projective space} associated to $V$.
We denote this projective space by $\Pi_{V}$.
For $\dim V=2$ there is only one line and we exclude this case.

An {\it isomorphism} of the projective space $\Pi_{V}$ to the projective space $\Pi_{V'}$
is a bijection $f:{\mathcal G}_{1}(V)\to{\mathcal G}_{1}(V')$ such that $f$ and $f^{-1}$ send lines to lines.
 
\begin{theorem}[The Fundamental Theorem of Projective Geometry]\label{theorem-FTPG}
Suppose that the dimensions of $V$ and $V'$ both are not less than $3$.
Then every isomorphism of $\Pi_{V}$ to $\Pi_{V'}$ is induced by a semilinear isomorphism $L:V\to V'$
and any other semilinear isomorphism inducing this isomorphism of projective spaces is a scalar multiple of $L$.
\end{theorem}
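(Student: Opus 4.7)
My plan is the classical coordinatization argument. The uniqueness part is essentially contained in the paragraph immediately preceding the theorem: if $L,L':V\to V'$ are semilinear isomorphisms inducing the same bijection $f$, then for each nonzero $x\in V$ there is a scalar $a_x$ with $L'(x)=a_x L(x)$, and the calculation with linearly independent $x,y$ forces $a_x=a_y$ (the linearly dependent case uses a third vector, for which $\dim V\ge 2$ suffices). So I concentrate on existence.

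Fix a basis $\{e_i\}_{i\in I}$ of $V$ and choose any nonzero vector $e'_i\in f(\langle e_i\rangle)$ for each $i$. Since $f$ sends lines to lines, it preserves the relation ``three $1$-dimensional subspaces lie in a common $2$-dimensional subspace'', hence also the relation of linear independence of triples. For distinct $i,j$ the point $f(\langle e_i+e_j\rangle)$ lies on the projective line through $\langle e'_i\rangle$ and $\langle e'_j\rangle$, so after a common rescaling I may arrange $f(\langle e_i+e_j\rangle)=\langle e'_i+e'_j\rangle$ for all $i\ne j$; consistency of the rescalings across different pairs of indices is guaranteed by $\dim V\ge 3$, which allows comparisons inside a plane spanned by three basis directions.

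Next I define the accompanying field map $\sigma:\mathbb{F}\to\mathbb{F}'$. For each $a\in\mathbb{F}$ and each pair $i\ne j$, the subspace $\langle e_i+ae_j\rangle$ lies on the projective line through $\langle e_i\rangle$ and $\langle e_j\rangle$, so $f(\langle e_i+ae_j\rangle)=\langle e'_i+\sigma_{ij}(a)e'_j\rangle$ for a unique scalar $\sigma_{ij}(a)\in\mathbb{F}'$. A check performed in the plane $\langle e_i,e_j,e_k\rangle$ using a third basis index $k$ (again exploiting $\dim V\ge 3$) shows that $\sigma_{ij}(a)$ is independent of the chosen pair, giving a well-defined $\sigma$ with $\sigma(0)=0$ and $\sigma(1)=1$. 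The main obstacle is to verify that $\sigma$ is a field isomorphism. Additivity $\sigma(a+b)=\sigma(a)+\sigma(b)$ is forced by applying $f$ to an appropriate coplanarity relation among $\langle e_i+ae_j\rangle$, $\langle e_i+be_j\rangle$ and a third point encoding their sum inside $\langle e_i,e_j,e_k\rangle$, while multiplicativity $\sigma(ab)=\sigma(a)\sigma(b)$ follows from the collinearity of $\langle e_j+ae_k\rangle$, $\langle e_i+be_j\rangle$ and $\langle e_i+abe_k\rangle$ inside the plane $\langle e_i,e_j+ae_k\rangle$. Bijectivity of $\sigma$ is automatic because $f^{-1}$ is also a projective isomorphism and yields $\sigma^{-1}$ by the same recipe.

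Having $\sigma$, I define $L:V\to V'$ by $L\bigl(\sum_i a_i e_i\bigr)=\sum_i\sigma(a_i)e'_i$. This is $\sigma$-linear by construction and bijective, since $\{e'_i\}$ is a basis of $V'$ (its $1$-dimensional spans are $f$-images of a basis of $\Pi_V$, and one uses $f^{-1}$ to see nothing is missed). To finish, I check that $L$ induces $f$ on every $1$-dimensional subspace, not only on those obtained from one or two basis directions: writing $\langle v\rangle$ with $v=a_1e_{i_1}+\cdots+a_me_{i_m}$, I induct on $m$ using additivity and multiplicativity of $\sigma$ together with line preservation to reduce to the already-known two-term case, which in turn is the defining property of $\sigma$. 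The uniqueness paragraph then closes the proof.
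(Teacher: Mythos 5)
Your proposal is correct in outline, but note that the paper does not actually prove Theorem \ref{theorem-FTPG}: its ``proof'' is a citation to \cite{Baer} (and to \cite{FaureFrolicher,FaureFrolicher-book,Havlicek,Pankov-book1} for more general versions), and only the uniqueness-up-to-scalar part is argued in the text, in the paragraph preceding the theorem, exactly as you reuse it. What you supply is the classical coordinatization argument that those references carry out, so your route is the standard one rather than a new one; its merit here is that it is self-contained and applies verbatim to infinite-dimensional $V$, since every verification takes place inside the span of at most three basis vectors. Three spots in your sketch need care when written out. First, the simultaneous normalization $f(\langle e_i+e_j\rangle)=\langle e'_i+e'_j\rangle$ for \emph{all} pairs is not something you can impose by independent rescalings; the clean organization is to normalize each $e'_j$ against a fixed base index $i_0$ and then \emph{prove} the identity for the remaining pairs by a computation in the plane $\langle e_{i_0},e_i,e_j\rangle$, e.g.\ using $(e_{i_0}+e_i)-(e_{i_0}+e_j)=e_i-e_j$; this is the consistency you allude to. Second, the collinearity you invoke for multiplicativity actually produces $\langle e_i-ab\,e_k\rangle$, since $(e_i+be_j)-b(e_j+ae_k)=e_i-ab\,e_k$, and the relevant line is the one spanned by $e_j+ae_k$ and $e_i+be_j$; the signs must be tracked (or the maps $\sigma_{ij}$ defined with a minus sign) for $\sigma(ab)=\sigma(a)\sigma(b)$ to come out correctly. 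Third, in the final induction, knowing that $f(\langle v\rangle)$ lies on the image of one line through points where $f$ and $L$ are already known to agree does not determine it; you must exhibit $\langle v\rangle$ as the intersection of two distinct such lines (for instance the line through $\langle v-a_me_{i_m}\rangle$ and $\langle e_{i_m}\rangle$ and the line through $\langle e_{i_1}\rangle$ and $\langle v-a_1e_{i_1}\rangle$), so that $f(\langle v\rangle)=\langle L(v)\rangle$ follows because two distinct lines meet in at most one point. Likewise, linear independence and spanning of $\{e'_i\}$ should be justified by the fact that $f$ and $f^{-1}$ preserve lines and hence projective spans of arbitrary finite sets, not only of triples. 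With these details filled in, your argument is a complete proof of the statement the paper delegates to the literature.
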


\begin{proof}
See, for example, \cite[Section III.1, p.44]{Baer}.
We also refer \cite{FaureFrolicher-book, Pankov-book1} or the original research articles \cite{FaureFrolicher,Havlicek}
for a more general version of this result.
\end{proof}

\begin{rem}\label{rem2-1}{\rm
If $V$ and $V'$ are of the same finite dimension not less than $3$, then every bijection of ${\mathcal G}_{1}(V)$ to ${\mathcal G}_{1}(V')$
sending lines to subsets of lines is an isomorphism of $\Pi_{V}$ to $\Pi_{V'}$ \cite[Theorem 2.26]{Artin}.
}\end{rem}

\begin{cor}\label{cor-FTPG}
If $\dim V\ge 3$, then every isomorphism $f$ of the lattice ${\mathcal L}(V)$ to the lattice ${\mathcal L}(V')$ 
is induced by a semilinear isomorphism $L:V\to V'$
and any other semilinear isomorphism inducing $f$ is a scalar multiple of $L$.
\end{cor}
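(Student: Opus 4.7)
The plan is to reduce the corollary to the Fundamental Theorem of Projective Geometry by restricting $f$ to $1$-dimensional subspaces and verifying that this restriction is an isomorphism of projective spaces.

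First I would observe that any lattice isomorphism preserves purely order-theoretic invariants. In particular, the atoms of ${\mathcal L}(V)$ (elements covering $0$) are exactly the members of ${\mathcal G}_{1}(V)$, and the elements covering an atom are precisely the $2$-dimensional subspaces. Since $f$ is a lattice isomorphism, it restricts to a bijection $f_1:{\mathcal G}_{1}(V)\to {\mathcal G}_{1}(V')$ and to a bijection between the collections of $2$-dimensional subspaces. Moreover $f_1$ preserves the property ``contained in a common $2$-dimensional subspace,'' so $f_1$ sends the line ${\mathcal G}_{1}(S)$ onto the line ${\mathcal G}_{1}(f(S))$ for every $2$-dimensional $S\subset V$. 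Because $f^{-1}$ enjoys the same properties, $f_1$ is an isomorphism $\Pi_{V}\to \Pi_{V'}$. (Note that the existence of a chain of length $3$ in ${\mathcal L}(V')$ forced by $f$ guarantees $\dim V'\ge 3$, so Theorem~\ref{theorem-FTPG} applies.)

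Next I would invoke Theorem~\ref{theorem-FTPG} to obtain a semilinear isomorphism $L:V\to V'$ with $L(P)=f(P)$ for every $P\in {\mathcal G}_{1}(V)$, unique up to a non-zero scalar multiple. The remaining task is to extend this agreement from ${\mathcal G}_{1}(V)$ to all of ${\mathcal L}(V)$. For any $X\in {\mathcal L}(V)$ one has
$$X=\bigvee\{P:P\in {\mathcal G}_{1}(X)\}$$
as a least upper bound in the lattice ${\mathcal L}(V)$. Because $f$ is an order isomorphism it preserves arbitrary suprema that exist in the partially ordered set, so
$$f(X)=\bigvee\{f(P):P\in {\mathcal G}_{1}(X)\}=\bigvee\{L(P):P\in {\mathcal G}_{1}(X)\}=L(X),$$
the last equality holding because $L$ is a semilinear bijection and thus its induced lattice map also realizes the same supremum in ${\mathcal L}(V')$. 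Thus $f$ is induced by $L$.

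Finally, uniqueness of $L$ up to a scalar multiple follows from the argument given earlier in the section: any two semilinear isomorphisms $V\to V'$ that agree on ${\mathcal G}_{1}(V)$ necessarily differ by a non-zero scalar, and $\dim V\ge 3>2$ poses no issue here. The only step that requires real care, rather than a pure order-theoretic reading, is the identification of atoms and height-$2$ elements in a way that allows lines to be recovered from the lattice structure; once that is in hand, the Fundamental Theorem does all the work and the extension to arbitrary closed subspaces is automatic from preservation of suprema.
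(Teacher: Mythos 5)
Your proposal is correct and takes essentially the same route as the paper: restrict $f$ to ${\mathcal G}_{1}(V)$, check it is an isomorphism $\Pi_{V}\to\Pi_{V'}$, apply Theorem \ref{theorem-FTPG}, and extend the agreement to all of ${\mathcal L}(V)$ using that every subspace is the join of the $1$-dimensional subspaces it contains (the paper phrases this last step as ${\mathcal G}_{1}(f(X))=f({\mathcal G}_{1}(X))=L({\mathcal G}_{1}(X))={\mathcal G}_{1}(L(X))$, which is the same idea as your preservation-of-suprema argument), with uniqueness exactly as you say. Only a cosmetic slip: the corollary concerns arbitrary subspaces of a vector space, so the closing reference to ``closed subspaces'' should read ``subspaces.''
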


\begin{proof}
Since $f$ preserves the inclusion relation in both directions, 
it transfers every ${\mathcal G}_{k}(V)$ to ${\mathcal G}_{k}(V')$.
Therefore, $\dim V'\ge 3$ and the restriction of $f$ to ${\mathcal G}_{1}(V)$ is an isomorphism of $\Pi_{V}$ to $\Pi_{V'}$.
By Theorem \ref{theorem-FTPG}, there exists a semilinear isomorphism $L:V\to V'$ such that 
$$f(X)=L(X)\;\mbox{ for every }\;X\in {\mathcal G}_{1}(V).$$
For any subspace $X\subset V$ we have 
$${\mathcal G}_{1}(f(X))=f({\mathcal G}_{1}(X))=L({\mathcal G}_{1}(X))={\mathcal G}_{1}(L(X))$$
which implies that $f(X)$ coincides with $L(X)$.
If $f$ is also induced by a semilinear isomorphism $L':V\to V'$, then $L(X)=L'(X)$ for every $X\in {\mathcal G}_{1}(V)$
and Theorem \ref{theorem-FTPG} implies that $L'$ is a scalar multiple of $L$.
\end{proof}

In the case when $\dim V=2$, every bijective transformation of ${\mathcal L}(V)$ preserving $0$ and $V$ is an automorphism of 
the lattice and the above statement fails.

\subsection{Linear and conjugate-linear operators}
The automorphism group of the field of real numbers is trivial and all semilinear mappings between real vector spaces are linear.
The automorphism group of the field of complex numbers contains the conjugation $a\to \overline{a}$
and infinitely many other automorphisms.

\begin{exmp}{\rm
Using Zorn's lemma and \cite[Chapter V, Theorem 2.8]{Lang}, we can show that
every automorphism of a field can be extended to an automorphism of any algebraically closed extension of this field
(see, for example, \cite[Section 1.1]{Pankov-book2}).
The field ${\mathbb Q}(\sqrt{p})$ ($p$ is a prime number) is contained in the algebraically closed field ${\mathbb C}$.
Consider the automorphism of ${\mathbb Q}(\sqrt{p})$ sending every $v+w\sqrt{p}$ to $v-w\sqrt{p}$
and extend it to an automorphism of ${\mathbb C}$.
Any such extension is not identity on ${\mathbb R}$ which implies that it is different from the conjugation.
}\end{exmp}

\begin{lemma}\label{lemma2-1}
Every continuous automorphism of the field ${\mathbb C}$ is identity or the conjugation.
\end{lemma}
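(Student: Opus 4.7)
The plan is to exploit the fact that any field automorphism is the identity on the prime subfield $\mathbb{Q}$, and then upgrade this from $\mathbb{Q}$ to $\mathbb{R}$ using continuity and density, at which point only the image of $i$ remains to be determined.

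First, I would recall that for any field automorphism $\sigma$ of $\mathbb{C}$, one has $\sigma(1)=1$; induction on $n$ gives $\sigma(n)=n$ for every $n\in\mathbb{N}$, then $\sigma(-n)=-n$ follows from additivity, and finally $\sigma(m/n)=m/n$ for $m,n\in\mathbb{Z}$, $n\neq 0$, follows from multiplicativity. So $\sigma$ fixes $\mathbb{Q}$ pointwise. This part requires no continuity hypothesis.

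Next, I would invoke continuity to extend this to $\mathbb{R}$. Given $r\in\mathbb{R}$, pick a sequence $q_n\in\mathbb{Q}$ with $q_n\to r$ in $\mathbb{C}$. By continuity $\sigma(q_n)\to\sigma(r)$, but $\sigma(q_n)=q_n\to r$, so $\sigma(r)=r$. Hence $\sigma$ is the identity on $\mathbb{R}$.

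Finally, write an arbitrary $z\in\mathbb{C}$ as $z=a+bi$ with $a,b\in\mathbb{R}$. Then
\[
\sigma(z)=\sigma(a)+\sigma(b)\sigma(i)=a+b\sigma(i),
\]
so everything reduces to computing $\sigma(i)$. Since $\sigma(i)^2=\sigma(i^2)=\sigma(-1)=-1$, we have $\sigma(i)\in\{i,-i\}$, giving $\sigma(z)=z$ in the first case and $\sigma(z)=\overline{z}$ in the second. No step is really an obstacle; the only subtle point is making explicit that continuity is used to pass from $\mathbb{Q}$ to $\mathbb{R}$, since on $\mathbb{Q}$ the automorphism is forced to be trivial without any topological assumption, whereas on $\mathbb{R}$ discontinuous automorphisms of $\mathbb{C}$ exist (as indicated by the preceding example).
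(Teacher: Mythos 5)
Your proof is correct and follows essentially the same route as the paper's: fix $\mathbb{Q}$ pointwise by the field axioms, use continuity and the density of $\mathbb{Q}$ in $\mathbb{R}$ to fix $\mathbb{R}$, and conclude from $\sigma(\mathbf{i})^2=-1$ that $\sigma$ is the identity or the conjugation. You merely spell out the density argument and the final decomposition $z=a+b\mathbf{i}$ that the paper leaves implicit.
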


\begin{proof}
If $\sigma$ is an automorphism of ${\mathbb C}$, then the restriction of $\sigma$ to ${\mathbb Q}$ is identity.
In the case when $\sigma$ is continuous, its restriction to ${\mathbb R}$ is identity.
It is clear that $\sigma({\mathbf i})=\pm {\mathbf i}$ and we get the claim.
\end{proof}

Let $H$ and $H'$ be complex Hilbert spaces.
A semilinear mapping $L:H\to H'$ is {\it bounded} if there is a nonnegative real number $a$ such that 
$$||L(x)||\le a||x||$$
for all vectors $x\in H$.
The smallest number $a$ satisfying this condition is called the {\it norm} of $L$ and denoted by $||L||$.

\begin{prop}\label{prop2-1}
For every bounded semilinear mapping of $H$ to $H'$ the associated automorphism of the field ${\mathbb C}$ is identity or the conjugation.
\end{prop}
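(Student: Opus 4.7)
The plan is to reduce the statement to Lemma \ref{lemma2-1} by showing that boundedness of $L$ forces the associated field automorphism $\sigma$ of $\mathbb{C}$ to be continuous. We may assume $L$ is nonzero (otherwise $L(ax) = 0 = \sigma(a)\cdot 0$ and the choice of $\sigma$ is conventionally taken to be identity), so there exists $x \in H$ with $L(x) \neq 0$.

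The first step is to extract a bound on $\sigma$ from the bound on $L$. For every $a \in \mathbb{C}$, semilinearity gives $L(ax) = \sigma(a)L(x)$, hence
$$|\sigma(a)|\cdot \|L(x)\| \;=\; \|L(ax)\| \;\le\; \|L\|\cdot |a|\cdot \|x\|.$$
Setting $C = \|L\|\cdot \|x\|/\|L(x)\|$, this gives the estimate $|\sigma(a)| \le C|a|$ for all $a \in \mathbb{C}$.

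The second step is to upgrade this to continuity of $\sigma$ on all of $\mathbb{C}$. Since $\sigma(0)=0$, the above estimate shows continuity at $0$. Because $\sigma$ is additive, continuity at $0$ propagates: $\sigma(a+h)-\sigma(a) = \sigma(h) \to 0$ as $h \to 0$, so $\sigma$ is continuous everywhere. Finally, Lemma \ref{lemma2-1} identifies $\sigma$ as either the identity or the conjugation, completing the argument.

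There is no real obstacle here; the only subtlety is ensuring the constant $C$ in the estimate is finite, which is precisely what the choice of a vector $x$ with $L(x) \neq 0$ guarantees, and the standard passage from continuity at a point to continuity everywhere for additive maps. If one wished to be fully pedantic about the $L=0$ case, one would simply declare $\sigma$ to be the identity by convention, since the semilinearity relation imposes no constraint on $\sigma$ in that case.
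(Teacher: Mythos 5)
Your proof is correct, and it takes a slightly more direct route than the paper does. The paper deduces Proposition \ref{prop2-1} from Lemma \ref{lemma2-2}, whose hypothesis is only that $\sigma$ maps every null sequence to a \emph{bounded} sequence; the proof of that lemma then needs a subsequence argument (choosing $a''_n$ with $na''_n\to 0$ and using $\sigma(n)=n$) to upgrade boundedness to continuity at $0$, and finally invokes Lemma \ref{lemma2-1}. You bypass Lemma \ref{lemma2-2} entirely: from $\|L(ax)\|=|\sigma(a)|\,\|L(x)\|\le \|L\|\,|a|\,\|x\|$ you get the stronger estimate $|\sigma(a)|\le C|a|$, which gives continuity at $0$ immediately, and additivity plus Lemma \ref{lemma2-1} finishes. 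What your argument buys is economy for this particular statement; what the paper's formulation buys is reusability, since the weak null-sequence hypothesis of Lemma \ref{lemma2-2} is precisely what can be verified later in the proof of Lemma \ref{lemma2-4}, where the semilinear map is not known to be bounded and only a contradiction-style boundedness check is available. Your explicit treatment of the degenerate case $L=0$ (where $\sigma$ is not determined by $L$ at all) is a reasonable pedantic remark; the paper silently ignores it as well, so it is not a point of divergence.
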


This is a simple consequence of the following.

\begin{lemma}\label{lemma2-2}
If $\sigma$ is an automorphism of the field ${\mathbb C}$ such that for every sequence of complex numbers $\{a_{n}\}_{n\in {\mathbb N}}$
converging to $0$ the sequence $\{\sigma(a_{n})\}_{n\in {\mathbb N}}$ is bounded, then $\sigma$ is identity or the conjugation.
\end{lemma}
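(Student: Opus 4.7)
The plan is to deduce from the boundedness hypothesis that $\sigma$ is continuous on $\mathbb{C}$ and then invoke Lemma \ref{lemma2-1}. The whole argument hinges on using the fact that a field automorphism of $\mathbb{C}$ is automatically $\mathbb{Q}$-linear.

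First I would unpack the hypothesis into a uniform statement: there exist $M>0$ and $\delta_0>0$ such that $|a|\le\delta_0$ implies $|\sigma(a)|\le M$. Indeed, if no such neighborhood of $0$ existed, then for every $n$ one could pick $a_n$ with $|a_n|<1/n$ and $|\sigma(a_n)|>n$, producing a sequence $a_n\to 0$ whose image $\sigma(a_n)$ is unbounded, contradicting the hypothesis.

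Next I would upgrade this uniform bound to a Lipschitz estimate at $0$ by rational scaling. Given a nonzero $a\in\mathbb{C}$ with $|a|$ small, pick a positive rational $q$ with $q|a|\le\delta_0$ and $q|a|$ as close to $\delta_0$ as one wishes; since $\sigma$ fixes $\mathbb{Q}$, one has $\sigma(qa)=q\sigma(a)$, so
\[
|\sigma(a)|=\frac{|\sigma(qa)|}{q}\le\frac{M}{q}.
\]
Letting $q\to\delta_0/|a|$ gives $|\sigma(a)|\le (M/\delta_0)\,|a|$ for all sufficiently small $a$. In particular $\sigma$ is continuous at $0$, and then additivity of $\sigma$ propagates continuity to every point of $\mathbb{C}$.

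With continuity established, Lemma \ref{lemma2-1} immediately forces $\sigma$ to be the identity or the conjugation. The only nontrivial step is the passage from pointwise boundedness on a neighborhood of $0$ to the linear estimate $|\sigma(a)|\le C|a|$; this is where the $\mathbb{Q}$-linearity of $\sigma$ does the real work, since without it one has no way to exploit scaling by small denominators to squeeze $a$ into the region where $\sigma$ is controlled.
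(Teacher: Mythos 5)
Your proof is correct and runs on the same engine as the paper's: reduce to continuity at $0$ via additivity and Lemma \ref{lemma2-1}, then play the fact that $\sigma$ fixes the rationals against the boundedness hypothesis by scaling. The only difference is packaging: you first extract a uniform bound on a neighborhood of $0$ and convert it into the Lipschitz estimate $|\sigma(a)|\le (M/\delta_0)|a|$ directly, whereas the paper argues by contradiction, choosing a subsequence with $na''_{n}\to 0$ and using $|\sigma(na''_{n})|=n|\sigma(a''_{n})|$ to violate boundedness.
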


\begin{proof}
By Lemma \ref{lemma2-1}, we need to show that $\sigma$ is continuous.
Since $\sigma$ is additive, it is sufficient to establish that $\sigma$ is continuous in $0$. 
Indeed, if $a_{n}\to a$, then $(a_{n}-a)\to 0$ and $\sigma(a_{n}-a)\to 0$ implies that $\sigma(a_{n})\to a$.

If a sequence $\{a_{n}\}_{n\in {\mathbb N}}$ converges  to $0$ and $\{\sigma(a_{n})\}_{n\in {\mathbb N}}$ is not converging to $0$,
then $\{a_{n}\}_{n\in {\mathbb N}}$ contains a subsequence $\{a'_{n}\}_{n\in {\mathbb N}}$ such that 
the inequality $|\sigma(a'_{n})|>a$ holds for a certain real number $a>0$ and all natural $n$.
In the sequence $\{a'_{n}\}_{n\in {\mathbb N}}$, we  choose a subsequence $\{a''_{n}\}_{n\in {\mathbb N}}$ satisfying $na''_{n}\to 0$.
Recall that $\sigma$ is an automorphism of ${\mathbb C}$ and we have $\sigma(n)=n$ for every natural $n$. 
Then $$|\sigma(na''_{n})|=n|\sigma(a''_{n})|>na$$ and 
the sequence $\{\sigma(na''_{n})\}_{n\in {\mathbb N}}$ is unbounded which contradicts our assumption.
\end{proof}

Linear mappings of $H$ to $H'$ will be called {\it linear operators}.
A semilinear mapping of $H$ to $H'$ is said to be a {\it conjugate-linear operator} if 
the associated automorphism of ${\mathbb C}$ is the conjugation.
If a linear or conjugate-linear operator $A:H\to H'$ is invertible, 
then the operators $A$ and $A^{-1}:H'\to H$ are of the same type, i.e. both are linear or conjugate-linear.

\begin{exmp}{\rm
It is well-known that every linear operator on the Hilbert space ${\mathbb C}^n$ is bounded. 
The mapping 
$$x=(x_{1},\dots,x_{n})\to \overline{x}=(\overline{x}_1,\dots,\overline{x}_n)$$
is an invertible bounded conjugate-linear operator on ${\mathbb C}^{n}$.
Every conjugate-linear operator on ${\mathbb C}^{n}$ is of type $x\to \overline{A(x)}$, where $A$ is a linear operator.
Therefore, each conjugate-linear operator on ${\mathbb C}^n$ is bounded.
}\end{exmp}

\begin{exmp}\label{exmp-conjlin}{\rm
Let $B=\{e_{i}\}_{i\in I}$ be an orthonormal basis of $H$. 
There is the unique conjugate-linear operator $C_{B}$ which leaves fixed every vector from this basis.
If $J$ is a countable or finite subset of $I$ and 
$x=\sum_{j\in J}a_{j}e_{j}$,
then 
$$C_{B}(x)=\sum_{j\in J}\overline{a}_{j}e_{j}.$$
Every conjugate-linear operator $A':H'\to H$ can be presented as the composition $C_{B}A$, where $A:H'\to H$ is a linear operator.
The operator $C_B$ is invertible bounded.
}\end{exmp}

We will exploit the following well-known operator properties:
\begin{enumerate}
\item[$\bullet$]
A linear operator is bounded if and only if transfers bounded subsets to bounded subsets.
\item[$\bullet$]
By the bounded inverse theorem, for every invertible bounded linear operator $A$ the inverse operator $A^{-1}$ is bounded.
\end{enumerate}
Using the operator $C_{B}$ from Example \ref{exmp-conjlin}, we can show that the same statements hold for conjugate-linear operators.

If $A:H\to H'$ is a bounded linear operator, then
for  every vector $y\in H'$ the mapping $$x\to \langle A(x), y\rangle$$ is a bounded linear functional on $H$
and, by Riesz's representation theorem, there exists the unique vector $A^{*}(y)\in H$ such that 
$$\langle A(x),y\rangle=\langle x, A^{*}(y)\rangle$$
for all vectors $x\in H$.
The mapping $A^{*}:H'\to H$ is a bounded linear operator and $||A^{*}||=||A||$.
This operator is known as {\it adjoint} to $A$. 

Now, we suppose that $A:H\to H'$ is a bounded conjugate-linear operator.
For every vector $y\in H'$ the mapping $$x\to \overline{\langle A(x),y\rangle}$$ is a bounded linear functional on $H$ 
and there is the unique vector $A^{*}(y)\in H$ such that 
$$\overline{\langle A(x),y\rangle}=\langle x,A^{*}(y)\rangle$$
for all vectors $x\in H$. 
We get a conjugate-linear operator $A^{*}:H'\to H$ which will be called {\it adjoint} to $A$. 
As above, the operator $A^{*}$ is bounded and $||A^{*}||=||A||$.

For every linear or conjugate-linear bounded operator $A:H\to H'$ we have $A^{**}=A$.
Also, $A^{*}$ is invertible if and only if $A$ is invertible.
In this case, the operators $(A^{-1})^{*}$ and $(A^{*})^{-1}$ are coincident.

An invertible linear operator $U$ on $H$  is {\it unitary} if for all vectors $x,y\in H$ we have 
$$\langle U(x),U(y)\rangle=\langle x,y\rangle.$$
An invertible conjugate-linear operator $U$ on $H$ is said to be {\it anti-unitary} if 
$$\langle U(x),U(y)\rangle=\overline{\langle x,y\rangle}$$
for all vectors $x,y\in H$.
In each of these cases, $||U(x)||=||x||$ for every vector $x\in H$.
In particular, unitary and anti-unitary operators are bounded and transfer orthonormal bases to orthonormal bases.
An invertible bounded linear operator $U$ is unitary if and only if  $U^{-1}=U^{*}$.
Similarly, an invertible bounded conjugate-linear operator $U$ is anti-unitary if and only if the same equality holds.

\begin{exmp}{\rm
The normalized Fourier transform on the Hilbert space $L^{2}({\mathbb R})$ is an unitary operator.
The operator $C_{B}$ from Example \ref{exmp-conjlin} is anti-unitary
and every anti-unitary operator on $H$ can be presented as the composition $C_{B}U$, where $U$ is an unitary operator on $H$.
}\end{exmp}

\begin{exmp}\label{exmp-proj}{\rm
A linear operator $P$ on $H$ is called an {\it idempotent} if $P^{2}=P$.
The latter equality implies that the restriction of $P$ to the image ${\rm Im}(P)$ is identity and
for every $x\in H$ the vector $x-P(x)$ belongs to the kernel ${\rm Ker}(P)$.
This means that $H$ is the direct sum of the subspaces ${\rm Ker}(P)$ and ${\rm Im}(P)$, i.e.
every vector $x\in H$ can be uniquely presented as the sum of $y\in {\rm Ker}(P)$ and $z\in {\rm Im}(P)$
such that $P(x)=z$.
The operator ${\rm Id}_{H}-P$ also is an idempotent and
$${\rm Ker}({\rm Id}_{H}-P)={\rm Im}(P),\;\;{\rm Im}({\rm Id}_{H}-P)={\rm Ker}(P).$$
Conversely, if $H$ is the direct sum of subspaces $S$ and $U$,
then for every vector $x\in H$ there are the unique $x_{S}\in S$ and $x_{U}\in U$ such that  $x=x_{S}+x_{U}$ and 
the operators $x\to x_{S}$ and $x\to x_{U}$ are idempotents.
If an idempotent $P$ is bounded, then the subspaces ${\rm Ker}(P)$ and ${\rm Im}(P)$ are closed
and the adjoint operator $P^{*}$ also is an idempotent.
Bounded self-adjoint idempotents are called {\it projections}. 
An idempotent $P$ is a projection if and only if the subspaces ${\rm Ker}(P)$ and ${\rm Im}(P)$ are orthogonal.
The Hilbert space $L^{2}({\mathbb R})$ is the orthogonal direct sum of the subspace of even functions and the subspace of odd functions;
the corresponding projections are 
$$f\to\frac{f(x)+f(-x)}{2}\;\mbox{ and }\;f\to\frac{f(x)-f(-x)}{2}.$$
}\end{exmp}

\begin{exmp}\label{exmp-inv}{\rm
A non-identity operator $S$  is called an {\it involution} if $S^{2}={\rm Id}_{H}$.
For every idempotent $P$ the operator $S={\rm Id}_{H}-2P$ is an involution;
the restriction of $S$ to ${\rm Ker}(P)$ is identity and $S(x)=-x$ for all vectors $x\in {\rm Im}(P)$.
Conversely, for every involution $S$ the operator $\frac{1}{2}({\rm Id}_{H}-S)$ is an idempotent.
So, there is a one-to-one correspondence between idempotents and involutions.
An involution is unitary if and only if the associated idempotent is a projection. 
Every unitary involution is self-adjoint.
}\end{exmp}

\subsection{Lattices of closed subspaces of Hilbert spaces}
Let $H$ be a complex Hilbert space.
Denote by ${\mathcal L}(H)$ the set of all closed subspaces of $H$ which is partially ordered by the inclusion relation $\subseteq$.
This is a bounded lattice whose least element is $0$ and whose greatest element is $H$.
For any two closed subspaces $X,Y$ the greatest lower bound is the intersection $X\cap Y$ 
and the least upper bound is $X\dotplus Y$, i.e. the minimal closed subspace containing $X+Y$.

\begin{exmp}{\rm
The sum $X+Y$ of closed subspaces $X$ and $Y$ is not necessarily closed.
Let $\{e_{n},f_{n}\}_{n\in \mathbb N}$ be an orthonormal basis for $H$.
Suppose that $X$ is the closed subspace spanned by all $e_{n}$ and 
$Y$ is the closed subspace spanned by all $f_{n}+ne_{n}$.
Every vector from the basis belongs to $X+Y$ which means that $X\dotplus Y$ coincides with $H$.
However, $X+Y$ is a proper subspace of $H$.
Indeed, the vector defined by a series $\sum^{\infty}_{n=1}a_{n}f_{n}$
belongs to $X+Y$ if and only if  the series 
$\sum^{\infty}_{n=1}a_{n}ne_{n}$
is convergent (we leave all details for the readers).
On the other hand, we can state that $X+Y$ is closed if one of the following possibilities is realized:
\begin{enumerate}
\item[$\bullet$] at least one of the subspaces is finite-dimensional,
\item[$\bullet$] $(X\cap Y)^{\perp}\cap X$ and $(X\cap Y)^{\perp}\cap Y$ are orthogonal, in particular,
if $X,Y$ are orthogonal.
\end{enumerate}
}\end{exmp}

As above, we write ${\mathcal G}_{k}(H)$ for the Grassmannian formed by $k$-dimensional subspaces of $H$.
Since all finite-dimensional subspaces of $H$ are closed,  every such Grassmannian  is contained in ${\mathcal L}(H)$.
Let ${\mathcal G}^{k}(H)$ be the Grassmannian consisting of closed subspaces whose codimension is equal to $k$.
In the case when $\dim V=n$ is finite, ${\mathcal G}^{k}(H)$ coincides with ${\mathcal G}_{n-k}(H)$.

If $H$ is infinite-dimensional, then we write ${\mathcal G}_{\infty}(H)$ for the set of all closed subspaces 
whose dimension and codimension both are infinite.
This subset is homogeneous, i.e. for any two elements of ${\mathcal G}_{\infty}(H)$
there is an invertible bounded linear operator on $H$ transferring one of them to the other, 
only in the case when $H$ is separable.

Let $H$ and $H'$ be complex Hilbert spaces.
Every invertible bounded linear or conjugate-linear operator $A:H\to H'$ induces an isomorphism of the lattice 
${\mathcal L}(H)$ to the lattice ${\mathcal L}(H')$ and
any non-zero scalar multiple of $A$ gives the same lattice isomorphism.

The mapping $X\to X^{\perp}$ is a bijective transformation of ${\mathcal L}(H)$ reversing the inclusion relation.
It sends every ${\mathcal G}_{k}(H)$ to ${\mathcal G}^{k}(H)$ and conversely.

\begin{prop}\label{prop2-2}
For every invertible bounded linear or conjugate-linear operator $A:H\to H'$ the mapping of ${\mathcal L}(H)$ to ${\mathcal L}(H')$ defined as
$$X\to A(X^{\perp})^{\perp}$$
is the lattice isomorphism induced by $(A^{*})^{-1}=(A^{-1})^{*}$.
\end{prop}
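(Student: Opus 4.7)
My plan is to reduce the assertion to the key identity
$$A(S)^{\perp}=(A^{*})^{-1}(S^{\perp})$$
valid for any closed subspace $S\subset H$. Once this is established, taking $S=X^{\perp}$ and using that $X^{\perp\perp}=X$ for closed $X$ immediately gives
$$A(X^{\perp})^{\perp}=(A^{*})^{-1}(X^{\perp\perp})=(A^{*})^{-1}(X),$$
so the map $X\to A(X^{\perp})^{\perp}$ is literally the map induced by $(A^{*})^{-1}$ on $\mathcal{L}(H)$. The identity $(A^{*})^{-1}=(A^{-1})^{*}$ has already been recorded in the excerpt; moreover, the adjoint preserves the ``type'' of the operator (linear or conjugate-linear) and both it and its inverse are bounded and invertible, so $(A^{*})^{-1}$ is an invertible bounded linear or conjugate-linear operator and therefore induces an isomorphism of $\mathcal{L}(H)$ onto $\mathcal{L}(H')$ by the general remarks preceding the proposition.

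The central step is therefore the identity for $A(S)^{\perp}$. In the linear case this is a one-line calculation with the defining property of the adjoint: $y\in A(S)^{\perp}$ iff $\langle A(x),y\rangle=0$ for every $x\in S$, iff $\langle x,A^{*}(y)\rangle=0$ for every $x\in S$, iff $A^{*}(y)\in S^{\perp}$, iff $y\in (A^{*})^{-1}(S^{\perp})$. In the conjugate-linear case the same chain works provided I insert a complex conjugate: by the definition $\overline{\langle A(x),y\rangle}=\langle x,A^{*}(y)\rangle$ given in the excerpt, the condition $\langle A(x),y\rangle=0$ is equivalent to $\langle x,A^{*}(y)\rangle=0$, and the rest of the argument proceeds unchanged. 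I should also note once, at the start, that $A(X^{\perp})$ is closed (because $A$ and $A^{-1}$ are bounded, hence homeomorphisms, and $X^{\perp}$ is closed), so both $A(X^{\perp})^{\perp}$ and $(A^{*})^{-1}(X)$ lie in $\mathcal{L}(H')$ and the map is well defined.

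I do not expect any real obstacle: the proposition is essentially a repackaging of the adjoint's defining property, together with the closedness observation and the two facts quoted from the previous subsection (the identification $(A^{*})^{-1}=(A^{-1})^{*}$ and the fact that invertible bounded linear or conjugate-linear operators induce lattice isomorphisms between the lattices of closed subspaces). The only mild care needed is in the conjugate-linear sub-case, where the appearance of a conjugate in the definition of $A^{*}$ must not be forgotten, though it is cosmetic: the equation ``$=0$'' is invariant under conjugation, so the computation collapses to the same chain of equivalences as in the linear case.
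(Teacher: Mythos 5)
Your proposal is correct and follows essentially the same route as the paper: the paper's proof is exactly the observation that $\langle y,x\rangle=\langle A(y),(A^{-1})^{*}(x)\rangle$ (or its conjugate), i.e.\ that orthogonality is transported by $A$ and $(A^{-1})^{*}=(A^{*})^{-1}$, which is the same use of the adjoint's defining property that drives your identity $A(S)^{\perp}=(A^{*})^{-1}(S^{\perp})$. Your version merely makes explicit the appeal to $X^{\perp\perp}=X$ and the closedness/boundedness remarks, which the paper leaves implicit.
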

\begin{proof}
If $x,y\in H$, then $\langle y,x \rangle=\langle A^{-1}A(y), x\rangle$ is equal to 
$$\langle A(y), (A^{-1})^{*}(x)\rangle\;\mbox{ or }\;\overline{\langle A(y), (A^{-1})^{*}(x)\rangle}.$$
In other word, $y$ is orthogonal to $x$ if and only if $A(y)$ is orthogonal to $(A^{-1})^{*}(x)$.
This implies that $A(X^{\perp})^{\perp}$ coincides with $(A^{-1})^{*}(X)$ for every closed subspace $X\subset H$.
\end{proof}

The main result of this section is the following.

\begin{theorem}[G. W. Mackey \cite{Mackey}, S. Kakutani and G. W. Mackey \cite{KakutaniMackey}]\label{theorem-mackey}
Suppose that $H$ and $H'$ are infinite-dimensional. 
Then every isomorphism of the lattice ${\mathcal L}(H)$ to the lattice ${\mathcal L}(H')$ is induced by 
an invertible bounded linear or conjugate-linear operator $A: H\to H'$
and any other operator inducing this lattice isomorphism is a scalar multiple of $A$.
\end{theorem}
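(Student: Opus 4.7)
The plan is to reduce to the Fundamental Theorem of Projective Geometry and then exploit the preservation of closedness encoded in the lattice isomorphism, in three steps. First I would extract a semilinear isomorphism. Since $f$ is inclusion-preserving, it sends atoms of $\mathcal{L}(H)$ (the $1$-dimensional subspaces) to atoms and maps $2$-dimensional subspaces, which are automatically closed, to $2$-dimensional subspaces, hence lines of $\Pi_{H}$ to lines of $\Pi_{H'}$. Theorem \ref{theorem-FTPG} then supplies a semilinear isomorphism $L\colon H \to H'$ with some field automorphism $\sigma$, unique up to nonzero scalar, inducing $f$ on $\mathcal{G}_{1}(H)$; the $1$-dimensional-comparison argument from the proof of Corollary \ref{cor-FTPG} upgrades this to $f(X) = L(X)$ for every $X \in \mathcal{L}(H)$, so $L$ and $L^{-1}$ both carry closed subspaces onto closed subspaces.

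Next I would force $\sigma$ to be the identity or the complex conjugation; this is the technical heart of the argument and the step I expect to be the main obstacle. The natural input is that for every nonzero $v \in H'$ the hyperplane $v^{\perp}$ is closed, so $L^{-1}(v^{\perp}) = f^{-1}(v^{\perp})$ is closed in $H$, making the $\sigma$-semilinear functional $\phi_{v}\circ L$, where $\phi_{v}(z)=\langle z,v\rangle$, have closed kernel. The catch is that a semilinear functional with closed kernel need not be continuous when $\sigma$ is wild: writing $\phi_{v}\circ L = \sigma \circ \pi$ with $\pi$ the canonical continuous linear projection onto a $1$-dimensional quotient of $H$, continuity of $\phi_{v}\circ L$ reduces to continuity of $\sigma$ on a copy of $\mathbb{C}$, which cannot be read off from closed hyperplanes alone. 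Rescuing the step therefore requires using that $L$ preserves \emph{all} closed subspaces, not only hyperplanes, as in the technical result of Kakutani and Mackey \cite{KakutaniMackey}. Once that argument delivers a sequence $a_{n}\to 0$ in $\mathbb{C}$ along which $\{\sigma(a_{n})\}$ is bounded, Lemma \ref{lemma2-2} yields that $\sigma$ is the identity or the conjugation, so $L$ is linear or conjugate-linear.

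With $\sigma$ tamed, the boundedness step is routine. Each $\phi_{v}\circ L$ is now a linear or conjugate-linear functional on $H$ with closed kernel, hence continuous. For fixed $x\in H$,
\[
\sup_{\|v\|\le 1}|\phi_{v}(L(x))| = \sup_{\|v\|\le 1}|\langle L(x),v\rangle| = \|L(x)\| < \infty,
\]
so the Banach--Steinhaus theorem applied to the family $\{\phi_{v}\circ L : \|v\|\le 1\}$ furnishes a constant $M$ with $\|\phi_{v}\circ L\| \le M$ for every $v$ in the unit ball of $H'$, whence $\|L(x)\| \le M\|x\|$ and $L$ is bounded. Uniqueness of $L$ up to a nonzero scalar is inherited from the uniqueness clause in Theorem \ref{theorem-FTPG}.
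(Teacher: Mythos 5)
Your steps 1 and 3 are essentially sound: the reduction via Theorem \ref{theorem-FTPG} together with the argument of Corollary \ref{cor-FTPG} is exactly how the paper begins, and your Banach--Steinhaus argument for boundedness is a legitimate variant of the paper's ``weakly bounded implies bounded'' computation (the paper instead decomposes each $x\in H$ along a hyperplane $y^{\perp}$ whose image under the operator is the orthogonal complement of the given vector of $H'$; both routes are routine once $L$ is known to be linear or conjugate-linear).

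The gap is step 2, which you yourself call the heart of the argument and then do not prove: you only defer to ``the technical result of Kakutani and Mackey.'' Moreover, your diagnosis of what that result needs is wrong. You assert that continuity of $\sigma$ ``cannot be read off from closed hyperplanes alone'' and that one must use that $L$ preserves \emph{all} closed subspaces; in fact the Kakutani--Mackey lemma (Lemma \ref{lemma2-4} in the paper) assumes only that $L$ sends every closed subspace of codimension $1$ to a closed subspace. What your individual-functional observation is missing is not more subspaces but the infinite-dimensional encoding trick of Lemma \ref{lemma2-3}: taking $x_{n}=2^{n}e_{n}$, every bounded sequence $(b_{n})$ is realized as $b_{n}=\langle x_{n},x\rangle$ for a single vector $x$. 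If $\{\sigma(a_{n})\}$ were unbounded along some null sequence, one passes to a subsequence with $|\sigma(b_{n})|\ge n\,\|L(x_{n})\|$, writes $x_{n}=y_{n}+b_{n}x'$ with $y_{n}\in x^{\perp}$ and $x'=x/\|x\|^{2}$, and obtains $L(-y_{n}/b_{n})\to L(x')$; the closedness of the single image $L(x^{\perp})$ then forces $x'\in x^{\perp}$, a contradiction. Note also that Lemma \ref{lemma2-2} requires $\{\sigma(a_{n})\}$ to be bounded for \emph{every} sequence $a_{n}\to 0$, not merely for one sequence as your phrasing suggests. Without supplying this argument (or an equivalent), your proposal does not establish that $\sigma$ is the identity or the conjugation, and the remaining steps have nothing to stand on.
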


For the finite-dimensional case this statement fails.
If $H$ is finite-dimensional, then ${\mathcal L}(H)$ consists of all subspaces of $H$
and there are the automorphisms of ${\mathcal L}(H)$ induced by unbounded semilinear automorphisms of $H$, i.e.
semilinear automorphisms associated to non-continuous automorphisms of the field ${\mathbb C}$.

If $H$ is infinite-dimensional, then ${\mathcal G}_{\infty}(H)$ is partially ordered by the inclusion relation,
but it is not a lattice (there exist pairs $X,Y\in {\mathcal G}_{\infty}(H)$ such that 
$X\cap Y$ is finite-dimensional or $X\dotplus Y$ is of finite codimension).
The next result concerns isomorphisms between such partially ordered sets.

\begin{theorem}[M. Pankov \cite{Pankov2,Pankov3}]\label{theorem-pank1}
Suppose that $H$ and $H'$ are infinite-dimensional.
Then every isomorphism of the partially ordered set ${\mathcal G}_{\infty}(H)$ to the partially ordered set ${\mathcal G}_{\infty}(H')$
can be uniquely extended to an isomorphism of the lattice ${\mathcal L}(H)$ to the lattice ${\mathcal L}(H')$.
\end{theorem}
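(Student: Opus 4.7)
My plan is to construct the extension by canonical recovery formulas expressing each closed subspace of $H$ as a lattice meet or join of surrounding ${\mathcal G}_\infty(H)$-elements, then transport these formulas through $f$. The backbone will be two recovery lemmas. Lemma~A: for every finite-dimensional $X \subset H$,
\[
X = \bigwedge\{Y \in {\mathcal G}_\infty(H) : X \subseteq Y\}.
\]
To prove it, given $v \notin X$, I decompose $H = X \oplus X^\perp$, set $w = v - P_X(v) \neq 0$ in $X^\perp$, pick inside $\langle w\rangle^\perp \cap X^\perp$ a closed subspace $W$ of infinite dimension and infinite codimension in $H$, and observe that $Y := X \oplus W$ lies in ${\mathcal G}_\infty(H)$, contains $X$, and excludes $v$ because $w \perp W$ forces $w \notin W$. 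Lemma~B: for closed $X$ of finite codimension,
\[
X = \bigvee\{Y \in {\mathcal G}_\infty(H) : Y \subseteq X\},
\]
which follows from Lemma~A by orthogonal complementation, since $Y \mapsto Y^\perp$ preserves ${\mathcal G}_\infty(H)$ and reverses inclusions.

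Using these lemmas I define $\tilde f : {\mathcal L}(H) \to {\mathcal L}(H')$ piecewise by $\tilde f(X) = f(X)$ for $X \in {\mathcal G}_\infty(H)$, $\tilde f(0)=0$, $\tilde f(H)=H'$, and for the remaining $X$
\[
\tilde f(X) = \bigwedge\{f(Y) : Y \in {\mathcal G}_\infty(H),\; X \subseteq Y\} \quad\text{if $X$ is finite-dimensional,}
\]
\[
\tilde f(X) = \bigvee\{f(Y) : Y \in {\mathcal G}_\infty(H),\; Y \subseteq X\} \quad\text{if $X$ has finite codimension.}
\]
Both formulas reduce to $f(X)$ on ${\mathcal G}_\infty(H)$ because $X$ is itself extremal in its indexing family, so the piecewise definition is consistent. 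Uniqueness of the extension is then immediate: any lattice isomorphism extending $f$ preserves arbitrary meets and joins, so by Lemmas~A and~B it must coincide with $\tilde f$.

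Next I verify order-preservation by case analysis: if $X_1 \subseteq X_2$ in ${\mathcal L}(H)$, the indexing family for $X_2$ is a sub- or super-family of that for $X_1$, directly forcing $\tilde f(X_1) \subseteq \tilde f(X_2)$ within each pure type, and the mixed case (finite-dimensional $X_1$, finite-codimensional $X_2$) is handled by inserting $Z \in {\mathcal G}_\infty(H)$ with $X_1 \subseteq Z \subseteq X_2$, which exists because $X_2/X_1$ is infinite-dimensional. Bijectivity and order-reflection I then establish by constructing $\widetilde{f^{-1}}$ by the same recipe from $f^{-1}$ and proving $\widetilde{f^{-1}} \circ \tilde f = \mathrm{id}_{{\mathcal L}(H)}$. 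For $X \in {\mathcal G}_\infty(H)$ this is trivial, and for finite-dimensional $X$ the inclusion $\widetilde{f^{-1}}(\tilde f(X)) \subseteq X$ drops out of the definitions and Lemma~A, since every $Y \supseteq X$ in ${\mathcal G}_\infty(H)$ gives $f(Y)$ in the family indexing $\widetilde{f^{-1}}(\tilde f(X))$. The reverse inclusion requires that every $Y' \in {\mathcal G}_\infty(H')$ with $Y' \supseteq \tilde f(X)$ satisfies $f^{-1}(Y') \supseteq X$, or contrapositively, that $Y \in {\mathcal G}_\infty(H)$ with $X \not\subseteq Y$ forces $\tilde f(X) \not\subseteq f(Y)$; and this is equivalent to the assertion that $\tilde f$ carries ${\mathcal G}_k(H)$ into ${\mathcal G}_k(H')$ for every finite $k$.

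The main obstacle is exactly this dimension-preservation. The bare order on ${\mathcal G}_\infty$ only distinguishes ``$Y_1 \cap Y_2$ infinite-dimensional'' from ``$Y_1 \cap Y_2$ finite-dimensional'' via existence or nonexistence of a common lower bound in ${\mathcal G}_\infty$, and does not at first glance separate the different finite dimensions. To refine this I exploit the order-definable covering relation on ${\mathcal G}_\infty$, where $Z$ covers $W$ iff $W \subsetneq Z$ with no element of ${\mathcal G}_\infty$ strictly between them, equivalently $\dim Z/W = 1$; this is preserved by $f$. Using chains of covering pairs descending from a common $Y_0 \supseteq X$, together with paired configurations $(Y_1, Y_2)$ with $Y_1 \cap Y_2 = X$ constructed from split orthonormal bases, I would track the exact dimension of $\bigwedge f(Y)$ on the $H'$-side and match it to $\dim X$. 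This is the delicate combinatorial core of the argument; once it is in hand, the symmetric composition identity, and hence the lattice isomorphism property of $\tilde f$, follow at once.
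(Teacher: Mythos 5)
Your reductions are mostly sound: Lemmas A and B are correct, the piecewise definition of $\tilde f$ is consistent, uniqueness of the extension does follow because a lattice isomorphism between the complete lattices ${\mathcal L}(H)$ and ${\mathcal L}(H')$ preserves arbitrary meets and joins, and order preservation together with the identity $\widetilde{f^{-1}}\circ\tilde f=\mathrm{id}$ would indeed finish the argument. But the entire proof funnels into one claim: that for a $k$-dimensional $X$ the meet $\bigwedge\{f(Y): Y\in{\mathcal G}_\infty(H),\ X\subseteq Y\}$ is again $k$-dimensional (equivalently, that the order isomorphism $f$ ``remembers'' the finite dimension of intersections which are invisible inside the poset ${\mathcal G}_\infty(H)$, since such pairs simply have no common lower bound there). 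At exactly this point you stop proving and start describing intentions: the covering relation you invoke lives entirely inside ${\mathcal G}_\infty(H)$, whereas the subspace $X$ you want to recover does not, and nothing in your outline shows even that the meet on the $H'$-side is nonzero, let alone that it has the right dimension; the phrase ``I would track the exact dimension \dots once it is in hand'' is an acknowledgement that the delicate core is missing. As written, this is a genuine gap, and it is precisely where the real content of the theorem lies.

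For comparison, the paper does not attempt a purely order-theoretic dimension count. It first proves (Lemma \ref{lemma2-5}) that for every $X\in{\mathcal G}_\infty(H)$ the restriction of $f$ to the elements of ${\mathcal G}_\infty(H)$ contained in $X$ is induced by an invertible bounded linear or conjugate-linear operator $A_X:X\to f(X)$; this is where Theorem \ref{theorem-FTPG} and Theorem \ref{theorem-mackey} enter, applied inside $X$ after dualizing by orthogonal complements. It then patches the local operators together (Lemma \ref{lemma2-6} handles pairs $X,Y$ with finite-dimensional intersection) to obtain a single bounded semilinear isomorphism $A:H\to H'$ with $f(X)=A(X)$ for all $X\in{\mathcal G}_\infty(H)$, after which the extension and its uniqueness are immediate; in particular the fact you need --- that $\bigcap\{f(Y):Y\supseteq P\}$ is one-dimensional for one-dimensional $P$ --- falls out as $A(P)$. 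If you wish to keep your meet/join framework, the cleanest repair is to import this local-operator-and-patching argument at the point where your sketch trails off, rather than trying to reconstruct the dimension data combinatorially from covering chains in ${\mathcal G}_\infty(H)$.
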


\begin{rem}\label{rem-projections}{\rm
As above, we suppose that $H$ is infinite-dimensional.
Let ${\mathcal I}(H)$ be the set consisting of all bounded idempotents on $H$.
This is a partially ordered set: for $P,Q\in {\mathcal I}(H)$ we have $P\le Q$ if 
$${\rm Im}(P)\subset {\rm Im}(Q)\;\mbox{ and }\;{\rm Ker}(Q)\subset {\rm Ker}(P).$$
Since every closed subspace $X\subset H$ can be identified with the projection whose image is $X$,
the lattice ${\mathcal L}(H)$ is contained in the partially ordered set ${\mathcal I}(H)$.
By P. G. Ovchinikov \cite{Ovch}, every automorphism of the partially ordered set ${\mathcal I}(H)$ is of type 
$$P\to APA^{-1}\;\mbox{ or }\;P\to AP^{*}A^{-1},$$
where $A$ is an invertible bounded linear or conjugate-linear operator on $H$.
L. Plevnik \cite{Plevnik} considered the partially ordered set ${\mathcal I}_{\infty}(H)$ 
formed by all idempotents from ${\mathcal I}(H)$ whose image and kernel both are infinite-dimensional.
One of his results states that every automorphism of this partially ordered set 
can be uniquely extended to an automorphism of the partially ordered set ${\mathcal I}(H)$. 
}\end{rem}

\subsection{Proof of Theorem \ref{theorem-mackey}}
In this and the next sections, we will suppose that $H$ and $H'$ are infinite-dimensional complex Hilbert spaces.

\begin{lemma}\label{lemma2-3}
There is a sequence of vectors $\{x_{n}\}_{n\in {\mathbb N}}$ in $H$ satisfying the following condition:
for every bounded sequence of complex numbers $\{a_{n}\}_{n\in {\mathbb N}}$
there exists a vector $x\in H$
such that $\langle x_{n},x\rangle =a_{n}$ for every $n$.
\end{lemma}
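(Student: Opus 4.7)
The plan is to construct the sequence explicitly using an orthonormal sequence, exploiting the fact that we have freedom in choosing the norms $\|x_n\|$ to grow as quickly as we wish.

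Since $H$ is infinite-dimensional, pick any orthonormal sequence $\{e_{n}\}_{n\in{\mathbb N}}\subset H$ and set
$$x_{n}=n\,e_{n}.$$
Given a bounded sequence $\{a_{n}\}_{n\in{\mathbb N}}$ of complex numbers with $M=\sup_{n}|a_{n}|<\infty$, define (with the conjugation placed to match the sesquilinearity convention of $\langle\cdot,\cdot\rangle$)
$$x=\sum_{n=1}^{\infty}\frac{\overline{a_{n}}}{n}\,e_{n}.$$
Convergence of this series in $H$ follows from
$$\sum_{n=1}^{\infty}\left|\frac{a_{n}}{n}\right|^{2}\le M^{2}\sum_{n=1}^{\infty}\frac{1}{n^{2}}<\infty,$$
so $x\in H$. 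A direct computation using $\langle e_{k},e_{n}\rangle=\delta_{kn}$ then yields $\langle x_{n},x\rangle=a_{n}$ for every $n$, as required.

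There is essentially no obstacle: the only content of the lemma is the observation that by letting $\|x_{n}\|$ diverge fast enough (here, like $n$), the coefficients needed to realise any bounded target sequence $\{a_{n}\}$ become small enough (like $1/n$) for the formal series defining $x$ to converge in $H$. The exponent $2$ in $\sum 1/n^{2}<\infty$ is what allows $M$ to be arbitrary. Any sequence $\{c_{n}\}$ of positive reals with $\sum 1/c_{n}^{2}<\infty$ would work in place of $\{n\}$, so the construction is quite flexible.
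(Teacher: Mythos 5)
Your construction is correct and is essentially the same as the paper's: the paper takes an orthonormal sequence $\{e_{n}\}$ and sets $x_{n}=2^{n}e_{n}$ with $x=\sum_{n}\overline{a}_{n}2^{-n}e_{n}$, while you use the weights $n$ instead of $2^{n}$; both choices satisfy $\sum 1/c_{n}^{2}<\infty$, which is all that is needed.
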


\begin{proof}
Let $\{e_{n}\}_{n\in {\mathbb N}}$ be a sequence formed by unit mutually orthogonal vectors of $H$.
We set $x_{n}=2^{n}e_{n}$.
Then for every bounded sequence of complex numbers $\{a_{n}\}_{n\in {\mathbb N}}$ the vector
$$x=\sum^{\infty}_{n=1}\frac{\overline{a}_{n}}{2^{n}}e_{n}$$
is as required.
\end{proof}

\begin{lemma}\label{lemma2-4}
If a semilinear isomorphism $L:H\to H'$ sends every closed subspace of codimension $1$ to a closed subspace,
then $L$ is linear or conjugate-linear.
\end{lemma}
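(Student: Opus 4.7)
The plan is a contradiction argument. Let $\sigma$ be the field automorphism of $\mathbb{C}$ associated with $L$, and suppose $\sigma$ is neither identity nor conjugation. By Lemma~\ref{lemma2-1}, $\sigma$ is not continuous, so Lemma~\ref{lemma2-2} furnishes a bounded sequence $\{a_n\}$ of complex numbers for which $\{\sigma(a_n)\}$ is unbounded. The goal is to exhibit a $u\in H$ such that $L(u^\perp)$ fails to be closed, contradicting the hypothesis.

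The first step is a translation of the closedness condition. If $L(u^\perp) = w^\perp$ for some $w\in H'$, then the two $\sigma$-semilinear functionals $x\mapsto \langle L(x),w\rangle$ and $x\mapsto \sigma(\langle x,u\rangle)$ have the same codimension-one kernel on $H$, hence
$$\langle L(x), w\rangle = \lambda\,\sigma(\langle x, u\rangle)\qquad (x\in H)$$
for some $\lambda\ne 0$. Consequently, $|\sigma(\langle x_n,u\rangle)|$ must be bounded whenever $\|L(x_n)\|$ is bounded; the contradiction will come by exhibiting $u$ and $\{x_n\}$ for which this fails.

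The principal step (and the main obstacle) is to produce an orthonormal sequence $\{e_n\}\subset H$ with $\|L(e_n)\|\le 2^{-n}$, so that putting $x_n := 2^n e_n$ (as in Lemma~\ref{lemma2-3}) also gives $\|L(x_n)\|\le 1$, since $\sigma$ fixes $2^n$. I would construct $\{e_n\}$ inductively. Having chosen $e_1,\dots,e_{n-1}$, set $W_n = \{e_1,\dots,e_{n-1}\}^\perp$. By the hypothesis and the bijectivity of $L$,
$$L(W_n) = L(e_1^\perp)\cap\cdots\cap L(e_{n-1}^\perp)$$
is closed in $H'$ as an intersection of closed hyperplanes, so $L|_{W_n}$ is a semilinear bijection between infinite-dimensional Hilbert spaces. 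If $L|_{W_n}$ were bounded below, its inverse would be a bounded semilinear bijection and, by Proposition~\ref{prop2-1}, would force $\sigma^{-1}$ (hence $\sigma$) to be identity or conjugation, contradicting our assumption. Thus $\inf\{\|L(v)\|:v\in W_n,\,\|v\|=1\}=0$, and we may choose a unit $e_n\in W_n$ with $\|L(e_n)\|\le 2^{-n}$.

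With $\{x_n\}$ so chosen, Lemma~\ref{lemma2-3} supplies $u\in H$ satisfying $\langle x_n,u\rangle = a_n$ for every $n$. Assuming $L(u^\perp)$ closed and applying the identity from the first step at $x=x_n$ gives
$$|\lambda|\,|\sigma(a_n)| = |\langle L(x_n),w\rangle| \le \|L(x_n)\|\,\|w\| \le \|w\|,$$
contradicting the unboundedness of $\{\sigma(a_n)\}$. Therefore $L(u^\perp)$ is not closed, contradicting the hypothesis; so $\sigma$ is identity or conjugation, and $L$ is linear or conjugate-linear.
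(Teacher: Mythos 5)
Your proof is correct, but it takes a genuinely different route from the paper's. Both arguments share the same skeleton: reduce via Lemma \ref{lemma2-2} to the behaviour of $\sigma$ on a null sequence, and use the interpolation sequence $x_{n}=2^{n}e_{n}$ from Lemma \ref{lemma2-3} (note that you really use the construction in its proof, valid for any orthonormal sequence, rather than its bare statement, and that the appeal to Lemma \ref{lemma2-1} is redundant, since the contrapositive of Lemma \ref{lemma2-2} already furnishes a null, hence bounded, sequence with unbounded $\sigma$-image). The differences lie in how the closedness hypothesis is exploited and how $\|L(x_{n})\|$ is controlled. The paper keeps an arbitrary orthonormal sequence and simply chooses a subsequence $b_{n}$ of the bad sequence with $|\sigma(b_{n})|\ge n\|L(x_{n})\|$; writing $x_{n}=y_{n}+b_{n}x'$ with $y_{n}\perp x$, it concludes that $L(x')$ lies in the closure of $L(x^{\perp})$, and closedness plus injectivity of $L$ give the contradiction --- a sequential argument using the closed image of a single hyperplane. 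You instead (i) construct a special orthonormal sequence with $\|L(e_{n})\|\le 2^{-n}$ via the dichotomy ``$L|_{W_{n}}$ bounded below or not'' combined with Proposition \ref{prop2-1} (a correct and rather elegant step; the closedness of $L(W_{n})$ only serves to apply Proposition \ref{prop2-1} verbatim between Hilbert spaces, the underlying Lemma \ref{lemma2-2} argument needing no completeness of the domain), and (ii) use the closedness of $L(u^{\perp})$ through the Hilbert-space description of closed hyperplanes as $w^{\perp}$, obtaining $\langle L(x),w\rangle=\lambda\,\sigma(\langle x,u\rangle)$ and finishing with Cauchy--Schwarz. Step (ii) anticipates exactly the functional-identity technique the paper uses after Lemma \ref{lemma2-4} to prove boundedness of $A$ in Theorem \ref{theorem-mackey}, and it yields a clean quantitative contradiction; the price is the auxiliary construction (i), which the paper's subsequence trick renders unnecessary --- you could drop (i) entirely by choosing the subsequence so that $|\sigma(a_{n})|\ge n\|L(x_{n})\|$, which turns your final estimate into $|\lambda|\le\|w\|/n$.
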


\begin{proof}
Let $\sigma$ be the automorphism of ${\mathbb C}$ associated to $L$.
By Lemma \ref{lemma2-2}, we need to show that for every sequence of complex number $\{a_{n}\}_{n\in {\mathbb N}}$ 
converging to $0$ the sequence $\{\sigma(a_{n})\}_{n\in {\mathbb N}}$ is bounded.
If the latter sequence is unbounded,
then $\{a_{n}\}_{n\in {\mathbb N}}$ contains a subsequence $\{b_{n}\}_{n\in {\mathbb N}}$
such that
\begin{equation}\label{eq2-1}
|\sigma(b_{n})|\ge n||L(x_{n})||\;\mbox{ for every }\;n\in {\mathbb N},
\end{equation}
where $\{x_{n}\}_{n\in {\mathbb N}}$ is the sequence from Lemma \ref{lemma2-3}.
Consider a vector $x\in H$ satisfying $\langle x_{n},x\rangle =b_{n}$ for every $n$ and take $x'=x/||x||^2$.
Then 
$$x_{n}=y_{n}+b_{n}x',$$ 
where each $y_{n}$ is a vector orthogonal to $x$, and
$$L(x_{n})/\sigma(b_{n})=L(y_{n}/b_{n})+L(x').$$
It follows from \eqref{eq2-1} that $L(x_{n})/\sigma(b_{n})\to 0$. Then the latter equality implies that
$$L(-y_{n}/b_{n})\to L(x').$$
Therefore, $L(x')$ belongs to the closure of $L(x^{\perp})$
(recall that every $y_{n}$ is contained in $x^{\perp}$).
By our hypothesis, the subspace $L(x^{\perp})$ is closed.
Hence $L(x')$ belongs to $L(x^{\perp})$ and we have $x'\in x^{\perp}$ which is impossible,
since $x'$ is a non-zero scalar multiple of $x$.
This contradiction gives the claim. 
\end{proof}

Let $f$ be an isomorphism of the lattice ${\mathcal L}(H)$ to the lattice ${\mathcal L}(H')$.
Then $f$ transfers ${\mathcal G}_{k}(H)$ to ${\mathcal G}_{k}(H')$ and ${\mathcal G}^{k}(H)$ to ${\mathcal G}^{k}(H')$.
Therefore, the restriction of $f$ to ${\mathcal G}_{1}(H)$ is an isomorphism between the projective spaces $\Pi_H$ and $\Pi_{H'}$.
Theorem \ref{theorem-FTPG} implies the existence of a semilinear isomorphism $A:H\to H'$ such that 
$$f(X)=A(X)\;\mbox{ for every }\;X\in {\mathcal G}_{1}(H).$$
As in the proof of Corollary \ref{cor-FTPG}, we establish that the same equality holds for every closed subspace $X\subset H$
and any other semilinear isomorphism inducing $f$ is a scalar multiple of $A$.

Since $f$ sends ${\mathcal G}^{1}(H)$ to ${\mathcal G}^{1}(H')$,
Lemma \ref{lemma2-4} guarantees that $A$ is linear or conjugate-linear.
Now, we prove that it transfers bounded subsets to bounded subsets which implies that $A$ is bounded.
Let $X$ be a bounded subset of $H$. It is sufficient to show that the subset 
\begin{equation}\label{eq2-2}
\{\langle A(x),x'\rangle :x\in X\}
\end{equation}
is bounded in ${\mathbb C}$ for every vector $x'\in H'$. 
This guarantees that $A(X)$ is bounded (every weakly bounded subset is bounded).

For every vector $x'\in H'$ there exists a vector $y\in H$ such that 
$$A(y^{\perp})=(x')^{\perp}.$$
Let us fix a vector $z\in H$ satisfying $\langle z,y\rangle =1$.
Any vector $x\in H$ can be presented in the form $$x=x_{0}+ \langle x,y\rangle z,$$
where $x_{0}$ is a vector orthogonal to $y$.
Then
$$\langle A(x),x'\rangle =\langle A(x_{0}),x'\rangle + \langle A(\langle x,y\rangle z),x'\rangle.$$
Since $x_{0}\in y^{\perp}$, we have $A(x_{0})\in A(y^{\perp})=(x')^{\perp}$.
This means that 
$$\langle A(x),x'\rangle =a \langle A(z),x'\rangle ,$$ 
where $a$ is equal to $\langle x,y \rangle$ or $\overline{\langle x,y\rangle}$.
So, 
$$|\langle A(x),x'\rangle |=|\langle x,y\rangle |\cdot|\langle A(z),x'\rangle|.$$
The latter implies that the subset \eqref{eq2-2} is bounded, since $\{\langle x,y\rangle : x\in X\}$ is bounded.

\begin{rem}\label{rem-mackey}{\rm
Theorem \ref{theorem-mackey} was first proved by G. W. Mackey in \cite{Mackey} for the lattices of closed subspaces of infinite-dimensional real normed spaces.
Lemma \ref{lemma2-4} was obtained in \cite{KakutaniMackey}.
It shows that the arguments given in \cite{Mackey} work for the complex case.
See also \cite{FillmoreLongstaff}.
}\end{rem}

\subsection{Proof of Theorem \ref{theorem-pank1}}
Let $f$ be an isomorphism of the partially ordered set ${\mathcal G}_{\infty}(H)$ to the partially ordered set ${\mathcal G}_{\infty}(H')$.

\begin{lemma}\label{lemma2-5}
For every $X\in {\mathcal G}_{\infty}(H)$ there is an invertible bounded linear or conjugate-linear operator $A_{X}:X\to f(X)$ such that
$$f(Y)=A_{X}(Y)$$ for every $Y\in {\mathcal G}_{\infty}(H)$ contained in $X$.
\end{lemma}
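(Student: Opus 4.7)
The plan is to reduce Lemma~\ref{lemma2-5} to Theorem~\ref{theorem-mackey} by extending the restriction of $f$ to a lattice isomorphism of $\mathcal L(X)$ onto $\mathcal L(f(X))$; Mackey's theorem then directly supplies the operator $A_X$. The first observation is that a closed subspace $Y\subseteq X$ belongs to $\mathcal G_{\infty}(H)$ precisely when $\dim Y=\infty$, since $X$ itself has infinite codimension in $H$ so that any $Y\subseteq X$ inherits the same property. Thus the restriction of $f$ is an order isomorphism of the family $\mathcal L_{\infty}(X)$ of all infinite-dimensional closed subspaces of $X$ onto $\mathcal L_{\infty}(f(X))$.

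To build the extension, set
$$
\tilde f(Y)=\bigcap\,\bigl\{\,f(Z):Z\in\mathcal L_{\infty}(X),\ Y\subseteq Z\,\bigr\}\qquad(Y\in\mathcal L(X)),
$$
noting that the choice $Z=Y$ is admissible whenever $Y\in\mathcal L_{\infty}(X)$, so $\tilde f$ agrees with $f$ there. The same prescription applied to $f^{-1}$ yields a candidate inverse $\tilde g:\mathcal L(f(X))\to\mathcal L(X)$. A crucial auxiliary identity to establish is
$$
Y=\bigcap\,\bigl\{\,Z\in\mathcal L_{\infty}(X):Y\subseteq Z\,\bigr\}
$$
for every closed $Y\subsetneq X$: this is trivial when $\dim Y=\infty$, while for finite-dimensional $Y$ and any $x\in X\setminus Y$, the closed hyperplane $(x-P_Y x)^{\perp}$ of $X$ contains $Y$ but excludes $x$ and is automatically infinite-dimensional since $\dim X=\infty$. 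Once $\tilde g\circ\tilde f=\mathrm{id}_{\mathcal L(X)}$ and $\tilde f\circ\tilde g=\mathrm{id}_{\mathcal L(f(X))}$ have been verified, $\tilde f$ is an order, hence a lattice, isomorphism. Theorem~\ref{theorem-mackey} then applies to the infinite-dimensional complex Hilbert spaces $X$ and $f(X)$ and produces an invertible bounded linear or conjugate-linear operator $A_X:X\to f(X)$ satisfying $\tilde f(Y)=A_X(Y)$ for every $Y\in\mathcal L(X)$. Restricting to $Y\in\mathcal L_{\infty}(X)$ gives the desired $f(Y)=A_X(Y)$ for every $Y\in\mathcal G_{\infty}(H)$ with $Y\subseteq X$.

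The main obstacle is the verification that $\tilde g\circ\tilde f=\mathrm{id}_{\mathcal L(X)}$. The inclusion $\tilde g(\tilde f(Y))\subseteq Y$ is essentially free: for any $Z\in\mathcal L_{\infty}(X)$ with $Z\supseteq Y$, the subspace $W:=f(Z)$ satisfies $W\supseteq\tilde f(Y)$, so $Z=f^{-1}(W)$ appears in the intersection defining $\tilde g(\tilde f(Y))$, and the auxiliary identity yields $\tilde g(\tilde f(Y))\subseteq\bigcap_{Z\supseteq Y}Z=Y$. The reverse inclusion $Y\subseteq\tilde g(\tilde f(Y))$ is more delicate: one must show that for every $W\in\mathcal L_{\infty}(f(X))$ with $W\supseteq\tilde f(Y)$, the preimage $f^{-1}(W)$ contains $Y$. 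This is precisely where the order-isomorphism property of $f$ on $\mathcal L_{\infty}(X)$ must be exploited beyond mere monotonicity, and I expect it to be the central technical step making the local lemma work.
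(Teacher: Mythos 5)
There is a genuine gap, and you have located it yourself: the inclusion $Y\subseteq\tilde g(\tilde f(Y))$ is not ``the central technical step making the local lemma work'' that you may defer --- it \emph{is} the lemma, and nothing in your proposal proves it. Concretely, you must show that the intersection $\tilde f(Y)=\bigcap\{f(Z):Z\in\mathcal L_{\infty}(X),\,Y\subseteq Z\}$ does not collapse: already for a line $Y$ you need $\tilde f(Y)\neq 0$ (indeed $\dim\tilde f(Y)=1$), and if $\tilde f(Y)=0$ your formula gives $\tilde g(\tilde f(Y))=0$, destroying the extension. Pure order-theoretic information about $f$ on $\mathcal L_{\infty}(X)$ does not obviously rule this out: an order isomorphism preserves those meets and joins that exist \emph{inside} the poset of infinite-dimensional closed subspaces, so if $Z_1\cap Z_2$ is a line, all you can deduce is that $f(Z_1)\cap f(Z_2)$ is finite-dimensional --- a priori it could be $0$, or of a different dimension. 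Establishing that arbitrary intersections are respected is tantamount to knowing $f$ is induced by a (semi)linear map, which is what you are trying to prove; so as written the argument is circular at its key point. (Your auxiliary identity, that every proper closed $Y\subset X$ is the intersection of the infinite-dimensional closed subspaces containing it, is fine, but it only feeds the easy inclusion.)

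The paper sidesteps this by dualizing inside $X$ rather than extending downward by intersections: setting $g(Y)=f(Y^{\perp})^{\perp}$ (orthocomplements taken in $X$ and in $f(X)$) gives an order isomorphism between the posets of closed subspaces of \emph{infinite codimension} in $X$ and in $f(X)$. These posets contain all finite-dimensional subspaces, so the restriction to ${\mathcal G}_{1}(X)$ is an isomorphism of projective spaces and Theorem \ref{theorem-FTPG} yields a semilinear isomorphism $L$ inducing $g$; boundedness of $L$ follows from Theorem \ref{theorem-mackey} applied to the lattices ${\mathcal L}(Y)$ for infinite-dimensional $Y$ of infinite codimension, together with writing $X$ as an orthogonal sum of two such subspaces; finally $A_{X}=(L^{*})^{-1}$ does the job by Proposition \ref{prop2-2}. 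Note that the paper does use your idea of recovering finite-dimensional subspaces as intersections of elements of ${\mathcal G}_{\infty}(H)$ --- but only \emph{after} this lemma, once the honest operators $A_{X}$ (which genuinely commute with intersections) are available. If you want to salvage your route, you would need an independent argument that $\tilde f$ preserves dimensions of finite-dimensional subspaces; I do not see how to get it without something equivalent to the dualization above.
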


\begin{proof}
Let ${\mathcal X}$ be the set of all elements of ${\mathcal G}_{\infty}(H)$ contained in $X$.
Then $f({\mathcal X})$ consists of all elements of ${\mathcal G}_{\infty}(H)$ contained in $X'=f(X)$.
We consider the closed subspaces $X$ and $X'$ as Hilbert spaces and
write $Y^{\perp}$ and $Y'^{\perp}$ for the orthogonal complements of $Y\subset X$ and $Y'\subset X'$ in these Hilbert spaces.
Denote by ${\mathcal Y}$ and ${\mathcal Y}'$ the sets formed by closed subspaces of infinite codimension in $X$ and $X'$, respectively.
Then
$$Y\in{\mathcal Y}\;\Longleftrightarrow Y^{\perp}\in {\mathcal X}\;\mbox{ and }\;
Y'\in {\mathcal Y}'\;\Longleftrightarrow\;Y'^{\perp}\in f({\mathcal X}).$$
The bijection $g:{\mathcal Y}\to {\mathcal Y}'$ sending
every $Y$ to $f(Y^{\perp})^{\perp}$ is order preserving in both directions.
It is clear that 
$$g({\mathcal G}_{k}(X))={\mathcal G}_{k}(X')$$
for every natural $k$.
In particular, the restriction of $g$ to ${\mathcal G}_{1}(X)$ is an isomorphism between the projective spaces $\Pi_X$ and $\Pi_{X'}$.
So, there is a semilinear isomorphism $L:X\to X'$ such that 
$$g(Y)=L(Y)\;\mbox{ for every }\;Y\in {\mathcal G}_{1}(X).$$ 
As above, we establish that this equality holds for all $Y\in {\mathcal Y}$.
Also, for every $Y\in {\mathcal Y}$ the lattice ${\mathcal L}(Y)$ is contained in ${\mathcal Y}$
and the restriction of $g$ to this lattice is an isomorphism to the lattice ${\mathcal L}(g(Y))$.
Theorem \ref{theorem-mackey} implies that $L$ is bounded on any infinite-dimensional subspace $Y\in {\mathcal Y}$.
Since $X$ can be presented as the orthogonal sum of two elements from ${\mathcal Y}$, the similinear mapping $L:X\to X'$ is bounded. 
So, $L:X\to X'$ is an invertible bounded linear or conjugate-linear operator such that 
$$f(Y^{\perp})^{\perp}=L(Y)$$
for every $Y\in {\mathcal Y}$.
Proposition \ref{prop2-2} shows that the operator $A_{X}=(L^{*})^{-1}$ satisfies the required condition.
\end{proof}

\begin{lemma}\label{lemma2-6}
Let $X$ and $Y$ be elements of ${\mathcal G}_{\infty}(H)$ satisfying
$$\dim (X\cap Y)<\infty.$$
Then there exists $Z\in {\mathcal G}_{\infty}(H)$ such that 
$X\cap Z$ and $Y\cap Z$ are elements of ${\mathcal G}_{\infty}(H)$ containing $X\cap Y$.
\end{lemma}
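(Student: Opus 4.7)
My approach is to construct $Z$ explicitly as an orthogonal sum $F + A + B$, where $F = X \cap Y$ and $A \subset X$, $B \subset Y$ are infinite-dimensional closed subspaces chosen so that $F$, $A$, $B$ are pairwise orthogonal and so that $Z^{\perp}$ remains infinite-dimensional. First I would set up the decomposition by writing $X' = X \cap F^{\perp}$ and $Y' = Y \cap F^{\perp}$, which gives orthogonal direct sums $X = F \oplus X'$ and $Y = F \oplus Y'$; since $F$ is finite-dimensional while $X, Y$ are infinite-dimensional, $X'$ and $Y'$ are infinite-dimensional, and
$$X' \cap Y' = (X \cap Y) \cap F^{\perp} = F \cap F^{\perp} = 0.$$

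Next I would build three mutually orthogonal orthonormal sequences $\{u_{n}\}_{n \in \mathbb{N}} \subset X'$, $\{v_{n}\}_{n \in \mathbb{N}} \subset Y'$, and $\{w_{n}\}_{n \in \mathbb{N}} \subset F^{\perp}$ by induction on $n$. At stage $n$ I pick $u_{n}$ as a unit vector in $X'$ orthogonal to all $3(n-1)$ previously chosen vectors; this is possible because $X'$ is infinite-dimensional and imposing finitely many orthogonality conditions reduces its codimension in $X'$ by at most that finite amount, leaving an infinite-dimensional (in particular nonzero) subspace. Then $v_{n}$ is chosen analogously inside $Y'$ and $w_{n}$ inside $F^{\perp}$, each orthogonal to everything chosen before. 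Setting $A = \overline{\operatorname{span}}\{u_{n}\}$, $B = \overline{\operatorname{span}}\{v_{n}\}$, $C = \overline{\operatorname{span}}\{w_{n}\}$ and $Z = F + A + B$, the subspace $Z$ is closed because $F$, $A$, $B$ are pairwise orthogonal.

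It then remains to verify the four required properties. The subspace $Z$ lies in $\mathcal{G}_{\infty}(H)$ because it contains $A$ (hence is infinite-dimensional) and $Z^{\perp} \supset C$ (hence has infinite codimension). For the intersection, given any $f + a + b \in X$ with $f \in F$, $a \in A$, $b \in B$, the difference $b = (f + a + b) - (f + a)$ lies in $X$; since $b \in B \subset Y' \subset Y$, it lies in $X \cap Y = F$, and since also $b \in B \subset F^{\perp}$, we conclude $b = 0$ and therefore $X \cap Z = F + A$. This subspace is infinite-dimensional and has infinite codimension because $F + A \subset X$ yields $X^{\perp} \subset (F + A)^{\perp}$, and $X^{\perp}$ is infinite-dimensional. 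Symmetrically $Y \cap Z = F + B$ lies in $\mathcal{G}_{\infty}(H)$, and both intersections contain $F$ by construction.

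The main obstacle is guaranteeing that $Z$ itself has infinite codimension in $H$. The naive choice $Z = F + A + B$ with $A$ and $B$ maximal in $X'$ and $Y'$ can easily fail when $X + Y$ is dense in $H$, for instance when $Y$ is a closed complement of $X$. The auxiliary sequence $\{w_{n}\}$ is introduced precisely to reserve an infinite-dimensional subspace $C$ orthogonal to $F + A + B$, forcing $Z^{\perp}$ to be infinite-dimensional irrespective of the relative position of $X$ and $Y$.
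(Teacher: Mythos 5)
Your construction is correct and is essentially the paper's own argument: the same decomposition $X=F\oplus X'$, $Y=F\oplus Y'$ with $F=X\cap Y$, the same inductive choice of mutually orthogonal vectors partly placed inside $Z$ and partly reserved to keep $Z^{\perp}$ infinite-dimensional. The only cosmetic difference is that the paper reserves its auxiliary sequence inside $X'$ and $Y'$ while you reserve it in $F^{\perp}$, and you spell out the verification of $X\cap Z=F+A$, $Y\cap Z=F+B$ that the paper leaves implicit.
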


\begin{proof}
Let $X'$ and $Y'$ be the orthogonal complements of $X\cap Y$ in $X$ and $Y$, respectively.
We choose inductively a sequence of mutually orthogonal vectors $\{x_{n},x'_{n},y_{n},y'_{n}\}_{n\in {\mathbb N}}$ such that 
$$x_{n},x'_{n}\in X'\;\mbox{ and }\;y_{n},y'_{n}\in Y'$$ 
for every $n$.
Denote by $Z'$ the closed subspace spanned by $\{x'_{n},y'_{n}\}_{n\in {\mathbb N}}$.
The subspace $Z=(X\cap Y)+Z'$ is as required.
\end{proof}

Let $P$ be a $1$-dimensional subspace of $H$. 
We take any $X\in {\mathcal G}_{\infty}(H)$ containing $P$ and set
$$g(P)=A_{X}(P).$$
We need to show that the definition of $g(P)$ does not depend on the choice of $X\in{\mathcal G}_{\infty}(H)$.

Suppose that $P$ is contained in $X\in {\mathcal G}_{\infty}(H)$ and $Y\in {\mathcal G}_{\infty}(H)$.
In the case when $X\cap Y$ is an element of ${\mathcal G}_{\infty}(H)$, 
we choose $X',Y'\in {\mathcal G}_{\infty}(H)$ contained in $X\cap Y$ and such that $X'\cap Y'=P$. Then 
$$A_{X}(P)=A_{X}(X')\cap A_{X}(Y')=f(X')\cap f(Y')=A_{Y}(X')\cap A_{Y}(Y')=A_{Y}(P).$$
If $X\cap Y$ is finite dimensional, then, by Lemma \ref{lemma2-6}, there is $Z\in {\mathcal G}_{\infty}(H)$ such that 
$X\cap Z$ and $Y\cap Z$ are elements of ${\mathcal G}_{\infty}(H)$ containing $X\cap Y$.
Applying the above arguments to $X,Z$ and $Y,Z$, we establish that 
$$A_{X}(P)=A_{Z}(P)=A_{Y}(P).$$

So, we get a mapping $g:{\mathcal G}_{1}(H)\to {\mathcal G}_{1}(H')$. 
This is an isomorphism of $\Pi_{H}$ to $\Pi_{H'}$, hence it is induced by a semilinear isomorphism $A:H\to H'$.
It is clear that 
$$f(X)=A(X)\;\mbox{ for every }\;X\in {\mathcal G}_{\infty}(H)$$ 
and the restriction of $A$ to $X$ is a scalar multiple of $A_{X}$.
Recall that each $A_{X}$ is bounded. Also, $H$ can be presented as the orthogonal sum of two elements from ${\mathcal G}_{\infty}(H)$.
This means that $A$ is bounded, i.e. it is an invertible bounded linear or conjugate-linear operator.

Suppose that $A':H\to H'$ is an invertible bounded linear or conjugate-linear operator
such that  $A'(X)=A(X)$ for every $X\in {\mathcal G}_{\infty}(H)$.
For every $1$-dimensional subspace $P\subset H$ there exist $X,Y\in {\mathcal G}_{\infty}(H)$ satisfying $X\cap Y=P$
and we have
$$A(P)=A(X)\cap A(Y)=A'(X)\cap A'(Y)=A'(P).$$
This implies that $A'$ is a scalar multiple of $A$,
in other words, the extension of $f$ to a lattice isomorphism is unique. 

\begin{rem}{\rm
This proof is an essential modification of the proof given in \cite{Pankov2,Pankov3}.
}\end{rem}

\section{Logics associated to Hilbert spaces}

\subsection{Logics}
A {\it logic} is a bounded lattice $(L,\le)$ together with an {\it orthogonal complementation} ({\it negation}) 
$x\to x^{\perp}$ satisfying the following conditions: 
\begin{enumerate}
\item[(1)] $x\le y$ implies that $y^{\perp} \le x^{\perp}$ for all $x,y\in L$,
\item[(2)] $x^{\perp\perp}=x$ and $x\wedge x^{\perp}=0$ for every $x\in L$,
\item[(3)] if $x,y\in L$ and $x\le y$, then $y=x\vee (x^{\perp}\wedge y)$.
\end{enumerate}
Elements of the logic $L$ are called {\it propositions}.

By the axiom (2), the orthogonal complementation is a bijective transformation of $L$.
It follows from (1) that $0^{\perp}=1$ and $1^{\perp}=0$. The axiom (3) implies that $x\vee x^{\perp}=1$.
The readers can check that De Morgan Law holds true:
$$(x \wedge y)^{\perp}=x^{\perp}\vee y^{\perp}\;\mbox{ and }\;(x\vee y)^{\perp}=x^{\perp}\wedge y^{\perp}$$
for all $x,y\in L$.

We say that $x\in L$ is {\it orthogonal} to $y\in L$ and write $x\perp y$ if $x\le y^{\perp}$.
By the axioms (1) and (2), the latter implies that $y\le x^{\perp}$. 
Therefore, the orthogonality relation is symmetric.

Two distinct propositions $x,y\in L$ are called {\it compatible} if there exist mutually orthogonal propositions $x',y',z\in L$ such that
$$x=z\vee x'\;\mbox{ and }\;y=z\vee y'.$$
Note that $0$ and $1$ are compatible to any proposition.

A {\it quantum logic} is a logic with at least two non-compatible propositions 
and a {\it classical logic} is a logic, where any two propositions are compatible.
See \cite{Cohen,HB-QL,Var} for more information.

For every complex Hilbert space $H$ the lattice ${\mathcal L}(H)$ together with the orthogonal complementation $X\to X^{\perp}$ is a quantum logic.
This logic is known as the {\it standard quantum logic} associated to $H$.

\subsection{Automorphisms of the logic ${\mathcal L}(H)$}
Let $H$ be a complex Hilbert space.
An automorphism of the logic ${\mathcal L}(H)$ is a lattice automorphism $f$ 
commuting with the orthogonal complementation, i.e. 
$$f(X^{\perp})=f(X)^{\perp}\;\mbox{ for every }\;X\in {\mathcal L}(H).$$
Every unitary or anti-unitary operator $A:H\to H$ induces an automorphism of the logic ${\mathcal L}(H)$.
Any non-zero scalar multiple $aA$ induces the same automorphism, 
but the operator $aA$ is unitary or anti-unitary only in the case when the complex scalar $a$ is unit, i.e. $|a|=1$.

The description of automorphisms of the logic is a simple consequence of results from the previous section and the following statement.

\begin{lemma}\label{lemma3-0}
If $\dim H\ge 3$ and $L$ is a semilinear automorphism of $H$ transferring orthogonal vectors to orthogonal vectors, 
i.e. $x\perp y$ implies that $L(x)\perp L(y)$,
then $L$ is a non-zero scalar multiple of an unitary or anti-unitary operator.
\end{lemma}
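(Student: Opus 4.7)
The plan is to first pin down the field automorphism $\sigma$ associated to $L$ (showing it must be the identity or complex conjugation, so $L$ is linear or conjugate-linear), and then to show that $\|L(\cdot)\|$ is a constant multiple of $\|\cdot\|$, which together with polarization will give that a rescaling of $L$ is unitary or anti-unitary.

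The key computation is the following. Fix any two orthonormal vectors $e_1,e_2$ in $H$. For every scalar $a\in{\mathbb C}$, a direct inner-product calculation gives
\begin{equation*}
\langle e_1+ae_2,\, -\bar{a}e_1+e_2\rangle = -a+a=0,
\end{equation*}
so the orthogonality hypothesis yields $L(e_1)+\sigma(a)L(e_2)\perp -\sigma(\bar a)L(e_1)+L(e_2)$. Expanding, and using that $L(e_1)\perp L(e_2)$, gives
\begin{equation*}
\sigma(a)\|L(e_2)\|^2=\overline{\sigma(\bar a)}\,\|L(e_1)\|^2.
\end{equation*}
Setting $a=1$ forces $\|L(e_1)\|=\|L(e_2)\|$, and then the identity above becomes $\sigma(\bar a)=\overline{\sigma(a)}$ for every $a$. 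Thus $\sigma$ commutes with complex conjugation and hence restricts to a field automorphism of ${\mathbb R}$. Since ${\mathbb R}$ admits only the identity field automorphism (its ordering is intrinsic, being given by squares), $\sigma|_{\mathbb R}={\rm id}$, and then $\sigma({\mathbf i})^{2}=-1$ yields $\sigma({\mathbf i})=\pm{\mathbf i}$. So $\sigma$ is the identity or the conjugation, which by the terminology of Section~2.3 means $L$ is linear or conjugate-linear.

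Now I would promote the equality $\|L(e_1)\|=\|L(e_2)\|$ from orthonormal pairs to \emph{all} unit vectors, using the hypothesis $\dim H\ge 3$. Given an arbitrary unit vector $e$, either $e\perp e_1$ (so the equality is immediate) or, since $\dim H\ge 3$, the orthogonal complement of the plane $\mathrm{span}(e,e_1)$ contains a unit vector $f$; chaining $\|L(e_1)\|=\|L(f)\|=\|L(e)\|$ gives a common value $c>0$. Semilinearity with $|\sigma(a)|=|a|$ then extends this to $\|L(x)\|=c\|x\|$ on all of $H$. Setting $U=L/c$, the map $U$ is a linear or conjugate-linear bijection preserving norms, hence by the polarization identity it preserves the inner product (or its conjugate), so it is unitary or anti-unitary, and $L=cU$ is the desired scalar multiple.

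I do not expect a serious obstacle; the one place that demands care is the opening computation, where the test pair $(e_1+ae_2,\,-\bar a e_1+e_2)$ has to be chosen so that expanding the orthogonality of the images isolates $\sigma(a)$ and $\overline{\sigma(\bar a)}$ cleanly. The use of $\dim H\ge 3$ is essential precisely in the second part (finding a unit vector orthogonal to two given ones), which explains why the statement fails for $\dim H=2$.
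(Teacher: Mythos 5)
Your proof is correct, but the route by which you identify the field automorphism is genuinely different from the paper's. The paper first proves that $x\mapsto\|L(x)\|$ is constant on unit vectors (the $x+y\perp x-y$ trick, plus a third orthogonal direction when the two unit vectors are not orthogonal, which is where $\dim H\ge 3$ enters), deduces that $L$ is bounded, and then invokes Proposition \ref{prop2-1} and Lemma \ref{lemma2-2} --- a continuity/boundedness argument --- to conclude that $\sigma$ is the identity or the conjugation; it then builds the unitary or anti-unitary operator explicitly from an orthonormal basis, realigning the image basis by a unitary and extracting $L=bL'$. You instead pin down $\sigma$ purely algebraically: the orthogonal test pair $(e_1+ae_2,\,-\bar a e_1+e_2)$ gives $\sigma(\bar a)=\overline{\sigma(a)}$, so $\sigma$ maps $\mathbb{R}$ into $\mathbb{R}$ and, since the only field endomorphism of $\mathbb{R}$ is the identity, $\sigma$ is the identity or the conjugation --- no appeal to the boundedness lemmas, and this half of your argument works already for $\dim H=2$, so the hypothesis $\dim H\ge 3$ is used only in the chaining step (which is the same as the paper's). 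Your endgame ($\|L(x)\|=c\|x\|$ plus polarization, including its conjugate-linear version) is coordinate-free, whereas the paper's basis construction exhibits $b$ and $L'$ explicitly; both are valid. Two small points to make explicit in a write-up: the norm equality you chain with is the $a=1$ case of your computation applied to an \emph{arbitrary} orthonormal pair, and when $e$ is a scalar multiple of $e_1$ the span of $e$ and $e_1$ is a line rather than a plane, though then $\|L(e)\|=\|L(e_1)\|$ follows at once from $|\sigma(\lambda)|=|\lambda|$.
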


\begin{proof}
First, we consider the case when $L$ is linear or conjugate-linear.
Since $L$ sends orthogonal vectors to orthogonal vectors, 
for every orthonormal basis $\{e_{i}\}_{i\in I}$ of $H$ there is an orthonormal basis $\{e'_{i}\}_{i\in I}$  such that 
$L(e_{i})=a_{i}e'_{i}$ for a certain non-zero scalar $a_{i}\in {\mathbb C}$.
We can assume that $e_{i}=e'_{i}$ for every $i$ 
(otherwise, we take the operator $UL$, where $U$ is the unitary operator sending every $e'_{i}$ to $e_{i}$).
If $i$ and $j$ are distinct elements of $I$, then the vectors $e_{i}+e_{j}$ and $e_{i}-e_{j}$ are orthogonal and the same holds for the vectors 
$$L(e_{i}+e_{j})=a_{i}e_{i}+a_{j}e_{j}\;\mbox{ and }\;L(e_{i}-e_{j})=a_{i}e_{i}-a_{j}e_{j}.$$
The latter implies that $|a_{i}|=|a_{j}|$ for any pair $i,j\in I$.
In other words, there is a positive real number $b$ such that $a_{i}=bb_{i}$, where $b_{i}$ is a unit complex number.
The linear operator transferring every $e_{i}$ to $b_{i}e_{i}$ is unitary and 
the conjugate-linear operator satisfying the same condition is anti-unitary.
We have $L=bL'$, where $L'$ is one of these operators. 

Now, we need to show that $L$ is linear or conjugate-linear.
If $x,y\in H$ are orthogonal unit vectors, then we apply the above arguments to the orthogonal pair $x+y,x-y$
and establish that 
$$||L(x)||=||L(y)||.$$
If unit vectors $x,y\in H$ are non-orthogonal, then we choose a unit vector $z$ orthogonal to both $x,y$ 
(this is possible, since $\dim H\ge 3$) and get 
$$||L(x)||=||L(z)||=||L(y)||.$$
So, the function $x\to ||L(x)||$ is constant on the set of unit vectors which implies that $L$ is bounded.
Hence $L$ is linear or conjugate-linear.
\end{proof}

\begin{rem}{\rm
We do not prove the statement for $\dim V=2$.
This case is left as an exercise for the readers.
}\end{rem}

\begin{theorem}\label{theorem3-1}
If $\dim H\ge 3$, then every automorphism of the logic ${\mathcal L}(H)$ is induced by an unitary or anti-unitary operator on $H$.
This operator is unique up to a unit scalar multiple. 
\end{theorem}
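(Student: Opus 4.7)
The plan is to reduce the statement to the lemma \ref{lemma3-0} just proved, by first invoking the appropriate structure theorem for lattice automorphisms and then exploiting orthogonality preservation.

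I would split into two cases according to whether $H$ is finite-dimensional or infinite-dimensional. In the finite-dimensional case, ${\mathcal L}(H)$ consists of all subspaces of $H$, so by Corollary \ref{cor-FTPG} (which applies because $\dim H\ge 3$) the lattice automorphism $f$ is induced by a semilinear automorphism $L:H\to H$, unique up to a non-zero scalar multiple. In the infinite-dimensional case, Theorem \ref{theorem-mackey} applies (with $H'=H$) and yields an invertible bounded linear or conjugate-linear operator $A:H\to H$ inducing $f$, again unique up to a non-zero scalar; in particular $A$ is a semilinear automorphism in the sense of Lemma \ref{lemma3-0}. So in either case $f$ is induced by some semilinear automorphism, which I will call $L$.

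The next step is to translate the fact that $f$ commutes with the orthogonal complementation into a statement about $L$. For any $1$-dimensional subspace $P=\langle x\rangle$, the hypothesis $f(P^{\perp})=f(P)^{\perp}$ gives $L(x^{\perp})=L(x)^{\perp}$. Hence if $x,y\in H$ are orthogonal and both non-zero, then $y\in x^{\perp}$ implies $L(y)\in L(x)^{\perp}$, so $L$ sends orthogonal vectors to orthogonal vectors. Since $\dim H\ge 3$, Lemma \ref{lemma3-0} now applies and gives $L=aU$ for some non-zero scalar $a\in {\mathbb C}$ and some unitary or anti-unitary operator $U$ on $H$. Because $L$ and $U$ differ by a scalar, they induce the same transformation of ${\mathcal L}(H)$, so $U$ induces $f$. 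This establishes existence.

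For uniqueness up to a unit scalar, suppose $U_{1}$ and $U_{2}$ are two unitary or anti-unitary operators inducing the same automorphism $f$. By Corollary \ref{cor-FTPG} in the finite-dimensional case and Theorem \ref{theorem-mackey} in the infinite-dimensional case, there exists $c\in {\mathbb C}\setminus\{0\}$ such that $U_{2}=cU_{1}$. Applying both operators to any unit vector $x$ and using that unitary and anti-unitary operators preserve the norm, we get $1=\|U_{2}(x)\|=|c|\cdot\|U_{1}(x)\|=|c|$, so $c$ is a unit complex number. (A small point to check is that $U_{1}$ and $U_{2}$ must be of the same type, linear or conjugate-linear, which follows because $U_{2}=cU_{1}$ is semilinear with respect to the automorphism of ${\mathbb C}$ associated to $U_{1}$.) The principal obstacle here is not in the lattice theory or the scalar argument but in Lemma \ref{lemma3-0} itself, which already handled the delicate passage from merely preserving orthogonality to the metric rigidity that forces unitarity or anti-unitarity; with that lemma in hand the rest is essentially bookkeeping.
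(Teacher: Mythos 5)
Your proposal is correct and follows essentially the same route as the paper: reduce to a semilinear automorphism via Corollary \ref{cor-FTPG} (finite-dimensional case) or Theorem \ref{theorem-mackey} (infinite-dimensional case), observe that compatibility with the orthogonal complementation forces orthogonal vectors to go to orthogonal vectors, and conclude with Lemma \ref{lemma3-0}. Your uniqueness argument via norm preservation is exactly the ``obvious'' second statement the paper leaves unspelled, and it is fine.
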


\begin{proof}
Every automorphism of the logic ${\mathcal L}(H)$ is induced by a semilinear automorphism of $H$.
This follows from Corollary \ref{cor-FTPG} and Theorem \ref{theorem-mackey} for the finite-dimensional and infinite-dimensional case, respectively.
Such a semilinear automorphism sends orthogonal vectors to orthogonal vectors and Lemma \ref{lemma3-0} implies that 
it is a scalar multiple of an unitary or anti-unitary operator.
The second statement is obvious. 
\end{proof}

The above statement fails for the case when $\dim H=2$.

\begin{exmp}\label{exmp-orth2}{\rm
Suppose that $\dim H=2$. The orthogonal complement of every $P\in {\mathcal G}_{1}(H)$ also belongs to ${\mathcal G}_{1}(H)$.
Denote by ${\mathcal X}$ the set of all such pairs $\{P,P^{\perp}\}$.
Every bijective transformation of ${\mathcal X}$ can be extended to an automorphism of the logic,
but such an extension is not unique. 
If $f$ is a bijective transformation of ${\mathcal X}$ sending $\{P,P^{\perp}\}$ to $\{Q,Q^{\perp}\}$,
then we define $g(P)$ as one of the elements from $\{Q,Q^{\perp}\}$ and $g(P^{\perp})$ as the other.
We get a bijective transformation $g$ of ${\mathcal G}_{1}(H)$ preserving the orthogonality relation in both directions.
This bijection can be uniquely extended to an automorphism of the logic.
}\end{exmp}

\begin{theorem}\label{theorem3-2}
Let $f$ be a bijective transformation of ${\mathcal L}(H)$ preserving the orthogonality relation in  both directions, i.e.
for all $X,Y\in {\mathcal L}(H)$ we have
$$X\perp Y\;\Longleftrightarrow\; f(X)\perp f(Y).$$
Then $f$ is an automorphism of the logic ${\mathcal L}(H)$.
\end{theorem}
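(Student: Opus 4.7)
The plan is to reduce the theorem to two claims: that $f$ preserves inclusion in both directions, and that $f$ commutes with the orthogonal complementation. Both will follow from purely orthogonality-based characterizations, and the argument will work for $H$ of any dimension.

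For the first claim, I would establish the equivalence
$$X\subseteq Y\;\Longleftrightarrow\;\{Z\in{\mathcal L}(H):Z\perp Y\}\subseteq\{Z\in{\mathcal L}(H):Z\perp X\}$$
for every pair $X,Y\in{\mathcal L}(H)$. The forward direction is immediate, since $X\subseteq Y$ gives $Y^{\perp}\subseteq X^{\perp}$. For the converse, one tests the hypothesis against $Z=Y^{\perp}$: the relation $Y^{\perp}\perp Y$ forces $Y^{\perp}\perp X$, hence $Y^{\perp}\subseteq X^{\perp}$, and a second application of $\perp$ combined with $X=X^{\perp\perp}$ yields $X\subseteq Y$. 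Since $f$ is a bijection of ${\mathcal L}(H)$ preserving $\perp$ in both directions, the set $\{Z:Z\perp X\}$ is mapped bijectively onto $\{W:W\perp f(X)\}$, and similarly for $Y$. Therefore the displayed equivalence transports under $f$, and $f$ preserves inclusion in both directions; in particular, $f$ is a lattice automorphism.

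For the second claim, observe that $X^{\perp}$ is the unique maximum of $\{Z\in{\mathcal L}(H):Z\perp X\}$ with respect to inclusion, since any $Z\perp X$ satisfies $Z\subseteq X^{\perp}$ and $X^{\perp}$ itself is orthogonal to $X$. The inclusion-preservation established in the first step transfers maxima to maxima, so $f(X^{\perp})$ is the maximum of $\{W:W\perp f(X)\}$, which by the same principle is $f(X)^{\perp}$. Hence $f(X^{\perp})=f(X)^{\perp}$ for every $X$, which is exactly the logic-automorphism condition.

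The main conceptual point, and the only place requiring care, is the $\perp$-theoretic reformulation of inclusion in the first step; once that equivalence is established, the rest of the argument is formal. I note that the reasoning uses only the axioms of a logic and does not invoke the Hilbert-space structure at any point, so the statement in fact remains valid in any logic in place of ${\mathcal L}(H)$.
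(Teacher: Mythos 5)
Your proof is correct and is essentially the paper's own argument: your set $\{Z\in{\mathcal L}(H):Z\perp X\}$ is exactly the set ${\rm ort}(X)$ used there, the key equivalence $X\subseteq Y\Leftrightarrow{\rm ort}(Y)\subseteq{\rm ort}(X)$ is the same, and identifying $X^{\perp}$ as the greatest element of ${\rm ort}(X)$ is the same concluding step. You simply spell out the converse implication (via the test element $Z=Y^{\perp}$) that the paper leaves implicit, and your closing remark that the argument is valid in any logic is accurate.
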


\begin{proof}
For every $X\in{\mathcal L}(H)$ we denote by ${\rm ort}(X)$ the set of all elements from ${\mathcal L}(H)$ orthogonal to $X$.
Then 
$$f({\rm ort}(X))={\rm ort}(f(X)).$$
If $X,Y\in {\mathcal L}(H)$, then 
$$X\subset Y\Leftrightarrow {\rm ort}(Y)\subset{\rm ort}(X)\Leftrightarrow {\rm ort}(f(Y))\subset{\rm ort}(f(X))
\Leftrightarrow f(X)\subset f(Y).$$
So, $f$ is a lattice automorphism.
Observe that $X^{\perp}$ is the greatest element of ${\rm ort}(X)$.
This implies that $f$ commutes with the orthogonal complementation.
\end{proof}

Using Theorem \ref{theorem-pank1} and Lemma \ref{lemma3-0}, we prove the following.

\begin{theorem}[P. {\v S}emrl \cite{Semrl}]\label{theorem3-3}
If $H$ is infinite-dimensional, then every bijective transformation of ${\mathcal G}_{\infty}(H)$ preserving the orthogonality relation in  both directions
can be uniquely extended to an automorphism of the logic ${\mathcal L}(H)$.
\end{theorem}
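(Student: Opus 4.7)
The plan is to reduce the theorem to Theorem \ref{theorem-pank1} by showing that the hypothesis on $f$ forces it to be an automorphism of the partially ordered set $({\mathcal G}_{\infty}(H),\subseteq)$, and then to verify that the resulting lattice extension automatically commutes with the orthogonal complementation.

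The first step is to give an order-theoretic characterization of inclusion purely in terms of orthogonality within ${\mathcal G}_{\infty}(H)$. For $X,Y\in{\mathcal G}_{\infty}(H)$, set ${\rm ort}_{\infty}(X)=\{Z\in{\mathcal G}_{\infty}(H):Z\perp X\}$. I claim that $X\subseteq Y$ is equivalent to ${\rm ort}_{\infty}(Y)\subseteq{\rm ort}_{\infty}(X)$. One direction is immediate. For the converse, suppose $X\not\subseteq Y$ and pick $x\in X\setminus Y$. The projection $x'$ of $x$ onto $Y^{\perp}$ is nonzero and satisfies $\langle x',x\rangle=\|x'\|^{2}>0$. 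Since $Y^{\perp}$ is infinite-dimensional, one can choose a closed subspace $Z\subset Y^{\perp}$ containing $x'$ such that both $Z$ and $Y^{\perp}\ominus Z$ are infinite-dimensional. Then $Z^{\perp}\supseteq Y$ is infinite-dimensional, so $Z\in{\mathcal G}_{\infty}(H)$, and $Z\perp Y$ while $Z\not\perp X$. Applying this characterization to both $f$ and $f^{-1}$, which by hypothesis preserve ${\rm ort}_{\infty}$, we conclude that $f$ preserves $\subseteq$ in both directions, hence is an automorphism of $({\mathcal G}_{\infty}(H),\subseteq)$.

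By Theorem \ref{theorem-pank1}, $f$ extends uniquely to a lattice automorphism $\tilde f$ of ${\mathcal L}(H)$. It remains to show that $\tilde f$ commutes with $X\mapsto X^{\perp}$. The key observation is that for any $X\in{\mathcal G}_{\infty}(H)$, the closed subspace $X^{\perp}$ is itself in ${\mathcal G}_{\infty}(H)$ and is the greatest element of ${\rm ort}_{\infty}(X)$ with respect to $\subseteq$. Since $f$ preserves both the order and the orthogonality relation on ${\mathcal G}_{\infty}(H)$, it sends this maximum to the maximum of ${\rm ort}_{\infty}(f(X))$, which is $f(X)^{\perp}$. Therefore
\[
f(X^{\perp})=f(X)^{\perp}\quad\text{for every }X\in{\mathcal G}_{\infty}(H).
\]

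Now consider $g:{\mathcal L}(H)\to{\mathcal L}(H)$ defined by $g(X)=\tilde f(X^{\perp})^{\perp}$. Since $X\mapsto X^{\perp}$ reverses inclusion and $\tilde f$ preserves it, $g$ is a lattice automorphism of ${\mathcal L}(H)$. For $X\in{\mathcal G}_{\infty}(H)$ the identity displayed above gives $g(X)=f(X^{\perp})^{\perp}=f(X)^{\perp\perp}=f(X)$, so $g$ restricts to $f$ on ${\mathcal G}_{\infty}(H)$. Uniqueness in Theorem \ref{theorem-pank1} then forces $g=\tilde f$, which is exactly $\tilde f(X^{\perp})=\tilde f(X)^{\perp}$ for all $X\in{\mathcal L}(H)$. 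Thus $\tilde f$ is an automorphism of the logic ${\mathcal L}(H)$, and its uniqueness as a lattice automorphism (hence as a logic automorphism) is already supplied by Theorem \ref{theorem-pank1}. The main technical obstacle is the construction in the first step, since one must carefully arrange the witness subspace $Z$ to simultaneously lie in $Y^{\perp}$, meet $X$ non-orthogonally, and sit in ${\mathcal G}_{\infty}(H)$.
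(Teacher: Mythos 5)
Your argument is correct, and while its first step is essentially the paper's (the paper just says that, as in the proof of Theorem \ref{theorem3-2}, one checks $f$ is an automorphism of the partially ordered set ${\mathcal G}_{\infty}(H)$), the second half takes a genuinely different route. The paper combines Theorem \ref{theorem-pank1} (together with Theorem \ref{theorem-mackey}) to represent $f$ by an invertible bounded linear or conjugate-linear operator, notes that this operator sends orthogonal vectors to orthogonal vectors, and then applies Lemma \ref{lemma3-0} to conclude that it is a scalar multiple of a unitary or anti-unitary operator, whence the extension is a logic automorphism. You instead stay entirely at the level of ordered sets: since $X^{\perp}$ is the greatest element of ${\rm ort}_{\infty}(X)$ and $f$ maps ${\rm ort}_{\infty}(X)$ onto ${\rm ort}_{\infty}(f(X))$ preserving the order, you get $f(X^{\perp})=f(X)^{\perp}$ on ${\mathcal G}_{\infty}(H)$, and then the uniqueness clause of Theorem \ref{theorem-pank1}, applied to the lattice automorphism $X\mapsto\tilde f(X^{\perp})^{\perp}$, propagates this identity to all of ${\mathcal L}(H)$. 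Your route buys an economy of means, avoiding the operator machinery and Lemma \ref{lemma3-0} altogether; the paper's route buys the extra information that the extension is induced by a unitary or anti-unitary operator (which for you follows only a posteriori from Theorem \ref{theorem3-1}). One simplification is available in your first step: the elaborate witness $Z$ is unnecessary, because $Y^{\perp}$ itself lies in ${\mathcal G}_{\infty}(H)$, is orthogonal to $Y$, and $Y^{\perp}\perp X$ would force $X\subseteq Y^{\perp\perp}=Y$; so whenever $X\not\subseteq Y$, the subspace $Y^{\perp}$ already belongs to ${\rm ort}_{\infty}(Y)\setminus{\rm ort}_{\infty}(X)$.
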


\begin{proof}
Let $f$ be a bijective transformation of ${\mathcal G}_{\infty}(H)$ preserving the orthogonality relation in both directions.
As in the proof of Theorem \ref{theorem3-2}, we establish that 
$f$ is an automorphism of the partially ordered set ${\mathcal G}_{\infty}(H)$. 
By Theorem \ref{theorem-pank1}, $f$ is induced by an invertible bounded linear or conjugate-linear operator on $H$.
This operator sends orthogonal vectors to orthogonal vectors and 
Lemma \ref{lemma3-0} implies that it is a scalar multiple of an unitary or anti-unitary operator.
So, $f$ can be extended to an automorphism of the logic ${\mathcal L}(H)$.
It follows from Theorem \ref{theorem-pank1} that such an extension is unique.
\end{proof}

\begin{rem}{\rm
The original proof of the latter statement (see \cite{Semrl}) is not related to Theorem \ref{theorem-pank1}.
}\end{rem}

By classical Wigner's theorem, 
every bijective transformation of ${\mathcal G}_{1}(H)$ preserving the transition probability is induced by an unitary or anti-unitary operator.
Recall that the transition probability between $P,P'\in {\mathcal G}_{1}(H)$ is equal to $|\langle x,x' \rangle|$, 
where $x\in P$ and $x'\in P'$ are unit vectors;
i.e. the transition probability  is zero only in the orthogonal case.
There is a non-bijective analogue of this result (see, for example,  \cite{Geher}).
In the case when $\dim H\ge 3$, the bijective version of Wigner's theorem is contained in the following.

\begin{prop}[U. Uhlhorn \cite{Uhlhorn}]\label{prop3-1}
Every bijective transformation of ${\mathcal G}_{1}(H)$ preserving the orthogonality relation in both directions 
can be uniquely extended to an automorphism of the logic ${\mathcal L}(H)$.
\end{prop}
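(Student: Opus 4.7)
The plan is to show that $f$ is an isomorphism of the projective space $\Pi_H$ and then invoke the Fundamental Theorem of Projective Geometry together with Lemma \ref{lemma3-0} to produce the desired unitary or anti-unitary operator. Throughout we use that $\dim H\ge 3$, which is the standing assumption in this subsection.

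The key step is a characterization of lines of $\Pi_H$ purely in terms of the orthogonality relation on ${\mathcal G}_{1}(H)$. For distinct $P,P'\in {\mathcal G}_{1}(H)$, I claim that a $1$-dimensional subspace $Q$ lies on the line $\mathcal{G}_1(P+P')$ if and only if $Q\perp R$ for every $R\in{\mathcal G}_{1}(H)$ satisfying $R\perp P$ and $R\perp P'$. One direction is immediate: such $R$ all lie in $(P+P')^{\perp}$, so any $Q\subset P+P'$ is orthogonal to each of them. Conversely, the $1$-dimensional subspaces $R\perp P,P'$ span $(P+P')^{\perp}$ (this subspace is nonzero because $\dim H\ge 3$), and so $Q$ being orthogonal to all of them forces $Q\subset ((P+P')^{\perp})^{\perp}=P+P'$. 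Since $f$ and $f^{-1}$ both preserve orthogonality, this characterization shows that $f$ sends lines bijectively to lines, i.e.\ $f$ is an isomorphism of $\Pi_H$ to itself.

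By Theorem \ref{theorem-FTPG}, $f$ is then induced by a semilinear automorphism $L:H\to H$, uniquely determined up to a non-zero scalar multiple. Since $f(P^{\perp})$ inherits the orthogonality relation with $f(P)$ for every pair $P\perp P'$ in ${\mathcal G}_{1}(H)$, the operator $L$ sends orthogonal vectors to orthogonal vectors. Lemma \ref{lemma3-0} therefore gives that $L$ is a non-zero scalar multiple of a unitary or anti-unitary operator $U$. The automorphism of the logic ${\mathcal L}(H)$ induced by $U$ restricts to $f$ on ${\mathcal G}_{1}(H)$, providing the required extension.

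For uniqueness, note that any extension of $f$ to an automorphism of ${\mathcal L}(H)$ must, by Theorem \ref{theorem3-1}, be induced by a unitary or anti-unitary operator, hence by a semilinear automorphism of $H$ that agrees with $L$ on ${\mathcal G}_{1}(H)$; Theorem \ref{theorem-FTPG} then shows any such operator is a scalar multiple of $U$, and therefore defines the same lattice automorphism. The main obstacle, as expected, is the line-preservation step: once lines of $\Pi_H$ are characterized via the orthogonality relation on the $1$-dimensional Grassmannian, the rest of the argument is a direct assembly of Theorem \ref{theorem-FTPG}, Lemma \ref{lemma3-0}, and Theorem \ref{theorem3-1}.
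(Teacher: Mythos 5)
Your proof is correct and follows essentially the same route as the paper: both establish that $f$ is an automorphism of $\Pi_{H}$ by characterizing lines purely through the orthogonality relation (your double-orthogonality description of ${\mathcal G}_{1}(P+P')$ versus the paper's use of an orthogonal basis of $(P+P')^{\perp}$, both resting on $S^{\perp\perp}=S$), and then apply Theorem \ref{theorem-FTPG} and Lemma \ref{lemma3-0}, with uniqueness as in your last paragraph. One small caveat: $\dim H\ge 3$ is not actually a standing hypothesis of the proposition, so the case $\dim H=2$, which your argument does not cover, should be noted separately -- the paper dismisses it as trivial (cf. Example \ref{exmp-orth2}).
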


\begin{proof}
The statement is trivial if $\dim H=2$.
Suppose that $\dim H\ge 3$ and $f$ is a bijective transformation of ${\mathcal G}_{1}(H)$ preserving the orthogonality relation in both directions.
We check that $f$ is an automorphism of the projective space $\Pi_{H}$.

Let $S$ be a $2$-dimensional subspace of $H$.
We take any orthogonal basis $\{e_{i}\}_{\in I}$ for $S^{\perp}$ and 
denote by $P_{i}$ the $1$-dimensional subspace containing $e_{i}$.
All $f(P_{i})$ are mutually orthogonal and we write $S'$ for the maximal closed subspace orthogonal to them.
A $1$-dimensional subspace $P$ is contained in $S$ if and only if $f(P)$ is contained in $S'$.
This implies that $S'$ is $2$-dimensional (since $f$ is orthogonality preserving in both directions).
So, $f$ transfers lines to lines.
Similarly, we show that the same holds for $f^{-1}$.

By Theorem \ref{theorem-FTPG}, $f$ is induced by a semilinear automorphism of $H$.
This semilinear automorphism sends orthogonal vectors to orthogonal vectors, i.e.
it is a scalar multiple of an unitary or anti-unitary operator.
This gives the claim.
\end{proof}

In Section 5, the same statement will be proved for the Grassmannian ${\mathcal G}_{k}(H)$ under the assumption that $\dim H >2k$.
Note that this statement fails if $\dim H=2k\ge 4$. 
In this case, the orthogonal complement of $X\in {\mathcal G}_{k}(H)$ is the unique element of ${\mathcal G}_{k}(H)$ orthogonal to $X$.
As in Example \ref{exmp-orth2},
any bijective transformation of the set of all such pairs $\{X,X^{\perp}\}$ gives a class of bijective transformations of ${\mathcal G}_{k}(H)$
preserving the orthogonality relation in both directions.
Since we assume that $\dim H>2$, every logic automorphism is induced by an unitary or anti-unitary operator.

\begin{rem}\label{rem4-3}{\rm
If $\dim H=2$, then a simple verification shows that every bijective transformation of ${\mathcal G}_{1}(H)$
sending orthogonal elements to orthogonal elements is orthogonality preserving in both directions.
Suppose that $\dim H=n$ is finite and not less than $3$.
Consider a bijective transformation $f$ of ${\mathcal G}_{1}(H)$ which sends orthogonal elements to orthogonal elements.
As in the proof of Proposition \ref{prop3-1}, for a $2$-dimensional subspace $S$
we choose mutually orthogonal $1$-dimensional subspaces $P_{1},\dots,P_{n-2}$ which are orthogonal to $S$.
If $S'$ is the $2$-dimensional subspace orthogonal to all $f(P_{i})$,
then $f$ transfers every $1$-dimensional subspace of $S$ to a $1$-dimensional subspace of $S'$.
So, $f$ sends lines to subsets of lines and, by Remark \ref{rem2-1}, it is an automorphism of $\Pi_{H}$.
Therefore, $f$ is induced by an unitary or anti-unitary operator, i.e. it is orthogonality preserving in both directions.
}\end{rem}

\subsection{Compatibility relation}
Two elements $X,Y\in {\mathcal L}(H)$ are compatible if there exist $X',Y'\in {\mathcal L}(H)$ such that $X\cap Y,X',Y'$ are mutually orthogonal and 
$$X=(X\cap Y)+X',\;\;Y=(X\cap Y)+Y'.$$
It is clear that $X'$ and $Y'$ are the intersections of $(X\cap Y)^{\perp}$ with $X$ and $Y$, respectively.
Therefore, $X$ and $Y$ are compatible if and only if 
$$(X\cap Y)^{\perp}\cap X\;\mbox{ and }\;(X\cap Y)^{\perp}\cap Y$$
are orthogonal.

For example, $X,Y\in {\mathcal L}(H)$ are compatible if $X\subset Y$ or $X\perp Y$, i.e.
the compatibility relation contains the inclusion and orthogonality relations.
Also, if $X\in {\mathcal L}(H)$ is compatible to every element of ${\mathcal L}(H)$, then $X$ is $0$ or $H$.

Every closed subspace $X\subset H$ can be identified with the projection $P_{X}$ whose image is $X$
(Example \ref{exmp-proj}). 
Closed subspaces $X,Y\subset H$ are orthogonal if and only if 
$$P_{X}P_{Y}=P_{Y}P_{X}=0.$$

\begin{prop}\label{prop3-2}
Closed subspaces $X,Y\subset H$ are compatible if and only if the projections $P_{X},P_{Y}$ commute.
\end{prop}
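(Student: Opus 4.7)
My plan is to attack the two implications separately, using the reformulation of compatibility stated just before the proposition: $X$ and $Y$ are compatible iff the subspaces $(X\cap Y)^{\perp}\cap X$ and $(X\cap Y)^{\perp}\cap Y$ are orthogonal (and of course their orthogonal sum with $X\cap Y$ recovers $X$ and $Y$).

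For the forward direction, assume compatibility and set $Z=X\cap Y$ and $X'=(X\cap Y)^{\perp}\cap X$, $Y'=(X\cap Y)^{\perp}\cap Y$. By hypothesis $Z,X',Y'$ are pairwise orthogonal closed subspaces, and $X=Z\dotplus X'$, $Y=Z\dotplus Y'$ as orthogonal sums. Since an orthogonal sum of closed orthogonal subspaces gives a projection equal to the sum of the projections (from Example \ref{exmp-proj}), we have $P_{X}=P_{Z}+P_{X'}$ and $P_{Y}=P_{Z}+P_{Y'}$. Expanding $P_{X}P_{Y}$ and $P_{Y}P_{X}$, all mixed terms vanish by the pairwise orthogonality (because the product of projections onto orthogonal subspaces is zero), leaving $P_{X}P_{Y}=P_{Z}=P_{Y}P_{X}$; in particular they commute.

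For the converse, suppose $P_{X}P_{Y}=P_{Y}P_{X}$ and set $E=P_{X}P_{Y}$. Using commutativity, $E^{2}=P_{X}P_{Y}P_{X}P_{Y}=P_{X}^{2}P_{Y}^{2}=P_{X}P_{Y}=E$, so $E$ is idempotent; and $E^{*}=P_{Y}^{*}P_{X}^{*}=P_{Y}P_{X}=E$, so by Example \ref{exmp-proj} $E$ is a projection. I will then identify $\operatorname{Im}(E)=X\cap Y$: any vector in the image lies in $X$ (since $E=P_{X}P_{Y}$ ends with $P_{X}$) and in $Y$ (rewriting $E=P_{Y}P_{X}$); conversely every $x\in X\cap Y$ satisfies $E(x)=x$. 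Hence $E=P_{X\cap Y}$. The key step, which is also the only place requiring a little care, is now to exhibit the witnesses $X',Y'$ for compatibility.

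To that end, observe that $I-P_{Y}$ is the projection onto $Y^{\perp}$, and it commutes with $P_{X}$; hence $P_{X}(I-P_{Y})=(I-P_{Y})P_{X}$ is again idempotent and self-adjoint, so it is the projection onto some closed subspace $X'\subset H$. Symmetrically $P_{Y}(I-P_{X})$ is the projection onto a closed subspace $Y'$. The identity
\[
P_{X}=P_{X}P_{Y}+P_{X}(I-P_{Y})=P_{X\cap Y}+P_{X'}
\]
combined with the fact that $P_{X\cap Y}P_{X'}=P_{X}P_{Y}P_{X}(I-P_{Y})=P_{X}(P_{Y}-P_{Y})=0$ shows that $X\cap Y$ and $X'$ are orthogonal and $X=(X\cap Y)\dotplus X'$; the analogous calculation handles $Y$. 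Finally, $P_{X'}P_{Y'}=P_{X}(I-P_{Y})P_{Y}(I-P_{X})=0$ shows $X'\perp Y'$, and similar products give $X'\perp(X\cap Y)$ and $Y'\perp(X\cap Y)$. Thus $X\cap Y,X',Y'$ are mutually orthogonal with the required decompositions, establishing compatibility. The main obstacle is just the bookkeeping in this last paragraph; everything follows mechanically from the commutativity of $P_{X}$ and $P_{Y}$ once one decides to use the four commuting projections $P_{X}P_{Y}$, $P_{X}(I-P_{Y})$, $(I-P_{X})P_{Y}$, $(I-P_{X})(I-P_{Y})$.
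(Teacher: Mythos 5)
Your argument is correct, and it is exactly the ``direct verification'' that the paper leaves to the reader: the forward direction via $P_{X}=P_{X\cap Y}+P_{X'}$, $P_{Y}=P_{X\cap Y}+P_{Y'}$, and the converse via the four commuting projections $P_{X}P_{Y}$, $P_{X}(I-P_{Y})$, $(I-P_{X})P_{Y}$, $(I-P_{X})(I-P_{Y})$, identifying $P_{X}P_{Y}=P_{X\cap Y}$ and reading off the orthogonal decompositions. No gaps; this is precisely the intended computation, written out in full.
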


\begin{proof}
Direct verification.
\end{proof}

\begin{rem}{\rm
This simple observation is a partial case of classical von Neumann's theorem.
By the spectral theorem, every bounded self-adjoint operator $A$ on $H$ can be identified with a spectral measure $\mu_{A}$
which takes values in the logic ${\mathcal L}(H)$, or equivalently, in the set of projections on $H$.
Two such measures are called {\it compatible} if all values of one measure are compatible to all values of the other.
The von Neumann theorem states that two bounded self-adjoint operators $A$ and $B$ commute if and only if 
the corresponding measures $\mu_{A}$ and $\mu_{B}$ are compatible.
See \cite{Cohen,Var} for more information.
}\end{rem}

We say that a subset of ${\mathcal L}(H)$ is {\it compatible} if any two distinct elements from this subset are compatible. 
Observe that $X,Y\in {\mathcal L}(H)$ are compatible if and only if there is an orthogonal basis of $H$ such that 
$X$ and $Y$ are spanned by subsets of this basis.
For every orthogonal basis $B$ of $H$ we denote by ${\mathcal A}(B)$ the set formed by all elements of ${\mathcal L}(H)$ spanned by subsets of $B$. 
This is a compatible subset of ${\mathcal L}(H)$.
The equality ${\mathcal A}(B)={\mathcal A}(B')$ implies that the vectors from one of the bases are scalar multiples of the vectors from the other.
So, there is a one-to-one correspondence between subsets of type ${\mathcal A}(B)$ and orthonormal bases of $H$.

\begin{prop}\label{prop3-3}
Every maximal compatible subset of ${\mathcal L}(H)$ is ${\mathcal A}(B)$ for a certain orthogonal basis $B$ of $H$.
\end{prop}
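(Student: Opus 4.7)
The plan is to build an orthogonal basis $B$ of $H$ whose $1$-dimensional spans all lie in $\mathcal{M}$, and then verify $\mathcal{M}=\mathcal{A}(B)$. The first step is a structural observation: by Proposition \ref{prop3-2}, $X\in\mathcal{M}$ is equivalent to $P_X$ commuting with $P_Y$ for every $Y\in\mathcal{M}$. Since $P_{X^{\perp}}=\mathrm{Id}_H-P_X$, and since for commuting projections $P_{X\cap Y}=P_XP_Y$ and $P_{X\dotplus Y}=P_X+P_Y-P_XP_Y$, maximality of $\mathcal{M}$ forces $\mathcal{M}$ to be closed under $\perp$ and under binary meet and join; passing to strong-operator limits further yields closure under arbitrary meets, and hence via De Morgan under arbitrary joins as well.

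Next I apply Zorn's lemma to the collection of orthonormal subsets $E\subset H$ with the property $\mathbb{C}e\in\mathcal{M}$ for every $e\in E$, obtaining a maximal such $B$. The heart of the proof is to show that $B$ is actually an orthogonal basis of $H$. Let $Z=(\overline{\operatorname{span}}\,B)^{\perp}$. The defining property of $B$ ensures every $e\in B$ lies in $X$ or in $X^{\perp}$ for each $X\in\mathcal{M}$, so $\overline{\operatorname{span}}\,B$ and consequently $Z$ are $P_X$-invariant for every $X\in\mathcal{M}$. To conclude $Z=0$, I need to produce a unit $z\in Z$ such that $\mathbb{C}z$ is compatible with every $X\in\mathcal{M}$; by maximality of $\mathcal{M}$ this $\mathbb{C}z$ then lies in $\mathcal{M}$, contradicting the maximality of $B$, because $B\cup\{z\}$ would be strictly larger with the same property. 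Such a $z$ is exactly a simultaneous $\{0,1\}$-eigenvector of the commuting family of projections $\{P_X|_Z:X\in\mathcal{M}\}$.

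Once $B$ is an orthogonal basis, the equality $\mathcal{M}=\mathcal{A}(B)$ follows easily. For $\mathcal{A}(B)\subseteq\mathcal{M}$, the closed span of any $S\subseteq B$ is the range of the projection obtained as the strong-operator sum of the pairwise commuting rank-one projections $P_{\mathbb{C}e}$, $e\in S$; this projection commutes with every $P_Y$, $Y\in\mathcal{M}$, because each summand does, and maximality of $\mathcal{M}$ then places the span in $\mathcal{M}$. For $\mathcal{M}\subseteq\mathcal{A}(B)$, compatibility of a given $X\in\mathcal{M}$ with each $\mathbb{C}e$, $e\in B$, forces $e\in X$ or $e\in X^{\perp}$, so because $B$ is a basis, $X=\overline{\operatorname{span}}(B\cap X)\in\mathcal{A}(B)$.

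The main obstacle is the middle step: exhibiting a simultaneous $\{0,1\}$-eigenvector of the commuting family $\{P_X|_Z\}$ on the nontrivial invariant subspace $Z$. My approach would exploit the closure of $\mathcal{M}$ under arbitrary meets established at the outset: for any fixed unit vector $z_0\in Z$, the element $X_0=\bigcap\{X\in\mathcal{M}:z_0\in X\}$ lies in $\mathcal{M}$ and is the smallest member of $\mathcal{M}$ containing $z_0$. Any $Y\in\mathcal{M}$ with $0\subsetneq Y\subsetneq X_0$ yields an orthogonal decomposition $X_0=(X_0\cap Y)\dotplus (X_0\cap Y^{\perp})$ into two strictly smaller members of $\mathcal{M}$; analyzing how $z_0$ splits along this decomposition and replacing $z_0$ by the suitable nonzero component inside $Z$ is the delicate point needed to eventually force $X_0$ to be $1$-dimensional, at which stage $z=z_0$ produces the contradiction and completes the argument.
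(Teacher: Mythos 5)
Your preparatory steps are fine: a maximal compatible subset $\mathcal{M}$ is indeed closed under orthocomplementation and under arbitrary meets and joins, the Zorn argument produces a maximal orthonormal set $B$ with $\mathbb{C}e\in\mathcal{M}$ for all $e\in B$, and once $B$ is known to be an orthogonal basis both inclusions between $\mathcal{M}$ and $\mathcal{A}(B)$ follow exactly as you describe. The gap is the step you yourself flag as delicate, and it is a genuine one: producing a unit $z\in Z$ with $z\in X$ or $z\perp X$ for every $X\in\mathcal{M}$ is the same as producing an atom of $\mathcal{M}$ inside $Z$, and the device $X_0=\bigcap\{X\in\mathcal{M}:z_0\in X\}$ gives no leverage. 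If $z_0=u+v$ splits nontrivially along $X_0=(X_0\cap Y)\oplus(X_0\cap Y^{\perp})$, neither summand contains $z_0$, so the minimality of $X_0$ is not contradicted; replacing $z_0$ by a component merely restarts the process with a new minimal subspace, and nothing forces the recursion to terminate in a $1$-dimensional member.

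Moreover, for infinite-dimensional $H$ no argument can close this gap, because the required atom need not exist. Take $H=L^{2}[0,1]$ and $\mathcal{M}=\{\chi_{E}L^{2}[0,1]:E\subseteq[0,1]\ \mbox{Borel}\}$. Any projection commuting with all multiplications by $\chi_{E}$ commutes with the von Neumann algebra $L^{\infty}[0,1]$ they generate, whose commutant is $L^{\infty}[0,1]$ itself; hence such a projection is again some $\chi_{E}$, so $\mathcal{M}$ is a maximal compatible subset. It contains no $1$-dimensional subspace, and no unit vector $z$ can satisfy your eigenvector condition: it would make the measure $|z|^{2}\,dt$ take only the values $0$ and $1$ on Borel sets, i.e.\ be a point mass, which is impossible for an $L^{2}$ density. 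So here $B=\emptyset$, $Z=H$, your $X_{0}$ can be all of $H$, and $\mathcal{M}\neq\mathcal{A}(B)$ for every orthogonal basis $B$ --- the statement itself needs an atomicity hypothesis, which holds automatically only in finite dimension (where your missing step is immediate: the commuting self-adjoint projections $P_{X}|_{Z}$ are simultaneously diagonalizable, so a common eigenvector exists and your proof goes through verbatim) or for compatible families of finite-dimensional subspaces. Note that the paper's own argument, which forms the family of smallest non-zero intersections of the given compatible set, tacitly presupposes exactly this atomicity (that family can be empty in the example above); it is harmless in the later application to $\mathcal{G}_{k}(H)$, where all members are $k$-dimensional, but your ``delicate point'' is precisely the place where the general infinite-dimensional claim breaks down rather than a detail to be smoothed over.
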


\begin{proof}
We need to show that every compatible subset ${\mathcal X}\subset {\mathcal L}(H)$ is contained in a certain ${\mathcal A}(B)$.
Let ${\mathcal Y}$ be the set formed by all smallest non-zero intersections of elements from ${\mathcal X}$.
In other words, $Y$ belongs to ${\mathcal Y}$ if and only if it is the intersection of some elements from ${\mathcal X}$
and the remaining elements of ${\mathcal X}$ are orthogonal to $Y$. 
Observe that the elements of ${\mathcal Y}$ are mutually orthogonal and $X\in {\mathcal X}$ belongs to ${\mathcal Y}$
only in the case when it is orthogonal to all other elements of ${\mathcal X}$.
For every $X\in {\mathcal X}$ we denote by $M_{X}$ 
the maximal closed subspace in $X$ orthogonal to all elements of ${\mathcal Y}$ contained in $X$.
Consider the set ${\mathcal Z}$ consisting of all elements of ${\mathcal Y}$ and all non-zero $M_{X}$.
The elements of this set are mutually orthogonal and 
there is an orthogonal basis $B$ of $H$ such that every element from ${\mathcal Z}$ is spanned by a subset of $B$. 
It is easy to see that ${\mathcal X}\subset {\mathcal A}(B)$.
\end{proof}

The set ${\mathcal A}(B)$ partially ordered by the inclusion relation $\subset$ together with the orthogonal complementation
is a maximal classical logic contained in the logic ${\mathcal L}(H)$.
Conversely, every maximal classical logic contained in ${\mathcal L}(H)$ is of such type.

\begin{rem}\label{rem-building}{\rm
The logic ${\mathcal L}(H)$ together with all maximal classical sub\-logics ${\mathcal A}(B)$ is a structure 
similar to the buildings associated to general linear groups.
If $X$ is a set (not necessarily finite),
then a {\it simplicial complex} $\Delta$ over $X$ is formed by finite subsets of $X$ such that every one-element subset belongs to $\Delta$
and for every $A\in \Delta$ all subsets of $A$ belong to $\Delta$; 
the elements of $X$ and $\Delta$ are called  {\it vertices} and {\it simplices}, respectively.
For example, if $V$ is a vector space of finite dimension, then the {\it flag complex} ${\mathfrak F}(V)$
is the simplicial complex whose vertices are all subspaces of $V$ and the simplices are all (not necessarily maximal) flags, 
i.e. chains of incident subspaces.
A {\it building} is a simplicial complex together with a family of distinguished subcomplexes called {\it apartments} and satisfying some axioms \cite{Tits}.
The building associated to the group ${\rm GL}(V)$ is the flag complex ${\mathfrak F}(V)$ whose apartments are defined by bases of $V$;
every apartment consists of all flags formed by subspaces spanned by subsets of a certain basis.
The maximal classical sublogics of ${\mathcal L}({\mathbb C}^n)$ correspond to the apartments of ${\mathfrak F}({\mathbb C}^n)$
defined by orthogonal bases.
}\end{rem}

Every automorphism of the logic ${\mathcal L}(H)$ preserves the compatibility relation in both directions.
However, the automorphism group of the logic ${\mathcal L}(H)$ is a proper subgroup in
the group of all bijective transformations of ${\mathcal L}(H)$ preserving the compatibility relation in both directions.
For example, the orthogonal complementation $X\to X^{\perp}$ belongs to this group, but it is not a logic automorphism.
Consider a more general example.

\begin{exmp}{\rm
Let ${\mathcal X}$ be a subset of ${\mathcal L}(H)$ satisfying the following condition:
for every $X\in {\mathcal X}$ the orthogonal complement $X^{\perp}$ belongs to ${\mathcal X}$.
Denote by $\pi_{\mathcal X}$ the bijective transformation of ${\mathcal L}(H)$ defined as follows
$$\pi_{\mathcal X}(X)=\begin{cases}
X^{\perp} &\mbox{if }\;\; X\in {\mathcal X}\\
X&\mbox{if }\;\; X\not\in {\mathcal X}\,.
\end{cases}$$
This transformation preserves the compatibility relation in both directions
(since every $X$ compatible to $Y$ is also compatible to $Y^{\perp}$).
}\end{exmp}

We state that every bijective transformation of ${\mathcal L}(H)$ preserving the compatibility relation in both directions 
is a logic automorphism or the composition of a logic  automorphism and a certain $\pi_{\mathcal X}$.

\begin{theorem}[L. Moln\'ar and P. \v{S}emrl \cite{MolnarSemrl}]\label{theorem-MS}
Let $f$ be a bijective transformation of ${\mathcal L}(H)$ preserving the compatibility relation in both directions, i.e.
$X,Y\in {\mathcal L}(H)$ are compatible if and only if $f(X),f(Y)$ are compatible.
Then there exists an automorphism $g$ of the logic ${\mathcal L}(H)$ such that for every $X\in {\mathcal L}(H)$ we have either 
$$f(X)=g(X)\;\mbox{ or }\;f(X)=g(X)^{\perp}.$$
 \end{theorem}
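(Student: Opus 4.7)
The plan is to first show that $f$ permutes orthocomplementation pairs $\{X,X^\perp\}$, and then to use Uhlhorn's theorem (Proposition \ref{prop3-1}) to construct a logic automorphism $g$ by choosing, for each $P\in \mathcal{G}_1(H)$, the one-dimensional representative of the image pair $\{f(P),f(P)^\perp\}$.

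I would begin by characterising pairs compatibility-theoretically. The elements of $\mathcal{L}(H)$ compatible with every proposition are exactly $0$ and $H$, so $f(\{0,H\})=\{0,H\}$. For a proper $X$, the intersection of the compatibility class $\mathcal{C}(X)$ with $\mathcal{G}_1(H)$ equals $\mathcal{G}_1(X)\cup \mathcal{G}_1(X^\perp)$, since a one-dimensional $P$ is compatible with $X$ iff $P\subset X$ or $P\perp X$. If proper $X,Y$ satisfy $\mathcal{C}(X)=\mathcal{C}(Y)$, then the corresponding equality $\mathcal{G}_1(X)\cup \mathcal{G}_1(X^\perp)=\mathcal{G}_1(Y)\cup \mathcal{G}_1(Y^\perp)$, together with a case split on whether $\mathcal{G}_1(X)\cap \mathcal{G}_1(Y)$ is empty and the fact that a closed subspace is determined by the 1-dim subspaces it contains, forces $Y\in\{X,X^\perp\}$. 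Because $X$ and $X^\perp$ always share a compatibility class and $f$ preserves compatibility in both directions, it follows that $f(X^\perp)=f(X)^\perp$ for every proper $X$.

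Next I would show that $f$ preserves 1-dim/codim-one pairs: for a one-dimensional $P$, one of $f(P),f(P)^\perp$ is again one-dimensional. The characterisation I would use is that $\{P,P^\perp\}$ is such a pair iff for every proper $Y$ compatible with $P$ one of the inclusions $P\subset Y$, $P\subset Y^\perp$, $Y\subset P$, $Y\subset P^\perp$ holds, equivalently the four-block orthogonal decomposition of the compatible pair $\{P,Y\}$ degenerates with one of the blocks $X\cap Y,X\cap Y^\perp,X^\perp\cap Y,X^\perp\cap Y^\perp$ vanishing. The point is to recast this as a purely compatibility-theoretic statement: for a non-degenerate compatible pair the sublattice generated by $\{X,X^\perp,Y,Y^\perp\}$ consists of the sixteen Boolean combinations of the four blocks, and this cardinality is preserved by $f$ because $f$ respects both compatibility and (by Step~1) orthogonal complementation of pairs.

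With this in hand, for each $P\in\mathcal{G}_1(H)$ define $g(P)$ to be the unique one-dimensional element of $\{f(P),f(P)^\perp\}$, which is well-defined provided $\dim H\ge 3$. Since distinct one-dimensional subspaces are compatible iff they are orthogonal, $g$ is a bijection of $\mathcal{G}_1(H)$ preserving orthogonality in both directions, so by Proposition \ref{prop3-1} it extends uniquely to a logic automorphism of $\mathcal{L}(H)$. To deduce $f(X)\in\{g(X),g(X)^\perp\}$ for every proper $X$, take $P\in\mathcal{G}_1(X)$: since $g(P)$ is one-dimensional and compatible with $f(X)$, either $g(P)\subset f(X)$ or $g(P)\subset f(X)^\perp$. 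A no-mixing argument applied to $\mathbb{C}(p+q)$ for $P,Q\in\mathcal{G}_1(X)$ — whose image under $g$ lies in the two-dimensional $g(P)+g(Q)$ and therefore cannot sit in either $f(X)$ or $f(X)^\perp$ if $g(P)$ and $g(Q)$ lie on opposite sides — forces uniformity, so $g(X)\subset f(X)$ or $g(X)\subset f(X)^\perp$; combining with the analogous statement for $X^\perp$ and with $g(X^\perp)=g(X)^\perp$ yields $g(X)\in\{f(X),f(X)^\perp\}$. The principal obstacle is the second step, as the natural characterisation of 1-dim pairs uses inclusion, which $f$ does not directly preserve; the low-dimensional cases $\dim H\le 2$ require a separate elementary treatment.
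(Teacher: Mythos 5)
Your overall route is essentially the paper's: fix $\{0,H\}$, detect ${\mathcal G}_{1}(H)\cup{\mathcal G}^{1}(H)$ by a counting invariant attached to compatible pairs, pass to a bijection of ${\mathcal G}_{1}(H)$, apply Proposition \ref{prop3-1}, and finish by the observation that a proper closed subspace is determined up to orthocomplementation by the $1$-dimensional subspaces compatible with it. Step 1 and the endgame (including the no-mixing argument on ${\mathbb C}(p+q)$, which is a pleasant variant of the paper's comparison of $g^{-1}f$ with the identity) are fine. The genuine gap is in Step 2, and it is exactly at the point you yourself flag as needing a ``purely compatibility-theoretic'' recast. The invariant you propose --- the cardinality of the sublattice generated by $\{X,X^{\perp},Y,Y^{\perp}\}$ --- is built from meets and joins, and nothing you have established lets $f$ respect these: preservation of compatibility together with $f(Z^{\perp})=f(Z)^{\perp}$ gives no control over $f(X\cap Y)$ versus $f(X)\cap f(Y)$. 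Asserting that the generated sublattice has the same cardinality as the sublattice generated by the images is tantamount to assuming that $f$ already acts lattice-theoretically on such configurations, which is (up to the $\pi_{\mathcal X}$ ambiguity) the content of the theorem; as written the step is circular, and without it you cannot conclude that $f$ maps ${\mathcal G}_{1}(H)\cup{\mathcal G}^{1}(H)$ onto itself, so $g(P)$ in Step 3 need not be well defined.

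The repair is the double commutant with respect to compatibility: for a compatible pair $X,Y$ put $\{X,Y\}^{c}$ for the set of elements compatible with both and $\{X,Y\}^{cc}=(\{X,Y\}^{c})^{c}$. This set is defined using the compatibility relation alone, so $f(\{X,Y\}^{cc})=\{f(X),f(Y)\}^{cc}$ automatically; and one computes that $\{X,Y\}^{cc}$ consists precisely of the orthogonal sums of the four blocks $X\cap Y$, $X\cap Y^{\perp}$, $X^{\perp}\cap Y$, $X^{\perp}\cap Y^{\perp}$, hence has $2^{k}$ elements with $k$ the number of nonzero blocks. With this invariant your dichotomy works verbatim: for $X\in{\mathcal G}_{1}(H)\cup{\mathcal G}^{1}(H)$ every proper compatible $Y$ gives $|\{X,Y\}^{cc}|\in\{4,8\}$, while any other proper $X$ admits a compatible $Y$ with $|\{X,Y\}^{cc}|=16$; applying the same criterion to $f^{-1}$ also settles the surjectivity of your map $g$ on ${\mathcal G}_{1}(H)$, which you currently pass over. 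Note too that once this is in place your Step 1 is not strictly needed for the construction of $g$ (compatibility with $f(Q)$ and with $f(Q)^{\perp}$ are equivalent anyway), though it does no harm; and the case $\dim H=2$ is indeed trivial rather than requiring separate work.
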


\begin{rem}{\rm
In \cite{MolnarSemrl}, this result was formulated in terms of commutativity of projections.
}\end{rem}

The same statement holds for the Grassmannian ${\mathcal G}_{\infty}(H)$.

\begin{theorem}[L. Plevnik \cite{Plevnik}]\label{theorem-plevnik}
Suppose that $H$ is infinite-dimensional and $f$ is a bijective transformation of ${\mathcal G}_{\infty}(H)$
preserving the compatibility relation in both directions. 
Then there exists an automorphism $g$ of the logic ${\mathcal L}(H)$ such that for every $X\in {\mathcal G}_{\infty}(H)$ we have either 
$$f(X)=g(X)\;\mbox{ or }\;f(X)=g(X)^{\perp}.$$
\end{theorem}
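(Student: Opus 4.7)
The plan is to reduce the theorem to Theorem \ref{theorem-pank1} by first showing that $f$ commutes with orthogonal complementation on $\mathcal{G}_\infty(H)$ and then lifting the induced map on the quotient where $X$ is identified with $X^\perp$ to a genuine order automorphism of $\mathcal{G}_\infty(H)$. The first step is to observe that $X\mapsto X^\perp$ preserves $\mathcal{G}_\infty(H)$ and, by Proposition \ref{prop3-2}, preserves compatibility in both directions, since $P_{X^\perp}=\mathrm{Id}_H-P_X$ commutes with $P_Y$ iff $P_X$ does; hence $X$ and $X^\perp$ always have the same compatibility class in $\mathcal{G}_\infty(H)$. I would prove the converse: if $X,Y\in\mathcal{G}_\infty(H)$ share their compatibility class, then $Y\in\{X,X^\perp\}$. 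Given $Y\notin\{X,X^\perp\}$, one constructs $Z\in\mathcal{G}_\infty(H)$ compatible with exactly one of $X$ and $Y$; when $X$ and $Y$ are themselves compatible, the four-piece orthogonal decomposition $H=(X\cap Y)\oplus(X\cap Y^\perp)\oplus(X^\perp\cap Y)\oplus(X^\perp\cap Y^\perp)$ supplies enough flexibility, and when they are incompatible the non-commutation of $P_X$ and $P_Y$ itself furnishes an asymmetric witness. Since $f$ preserves compatibility in both directions, this forces $f(X^\perp)\in\{f(X),f(X)^\perp\}$, and injectivity of $f$ then gives $f(X^\perp)=f(X)^\perp$ for every $X$.

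I would next pass to the quotient $\overline{\mathcal{G}}=\mathcal{G}_\infty(H)/{\sim}$ where $X\sim X^\perp$. Both the compatibility relation and the relation $R$ defined by ``$[X]$ and $[Y]$ admit comparable, equivalently orthogonal, representatives'' descend to $\overline{\mathcal{G}}$, and by the first step $\bar f$ preserves the former. The central technical claim is that $R$ admits an intrinsic compatibility-only characterization: heuristically, $\{[X],[Y]\}\in R$ iff some piece of the four-way decomposition above vanishes, which should be detectable through the lattice of maximal compatible subsets (Proposition \ref{prop3-3}, identifying them with orthogonal bases). Granting this, $\bar f$ preserves $R$. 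A careful choice of section then lifts $\bar f$ to a bijection $\tilde f:\mathcal{G}_\infty(H)\to\mathcal{G}_\infty(H)$ preserving inclusion in both directions, to which Theorem \ref{theorem-pank1} applies, yielding a unique lattice automorphism $g$ of $\mathcal{L}(H)$ extending $\tilde f$. Since $\tilde f$ is compatible with $\perp$ by construction, the uniqueness clause of Theorem \ref{theorem-pank1} forces $g$ to commute with $\perp$, so $g$ is a logic automorphism; and since $\tilde f(X)\in\{f(X),f(X)^\perp\}$, we obtain $f(X)\in\{g(X),g(X)^\perp\}$ for every $X\in\mathcal{G}_\infty(H)$.

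The main obstacle is the middle step: extracting an intrinsic compatibility-only criterion for $R$ on $\overline{\mathcal{G}}$ and then consistently lifting $\bar f$ to a poset automorphism rather than merely a relation-preserver. The separating-subspace constructions underlying Step 1 are themselves delicate in infinite dimensions, since the pieces of the four-way decomposition can have very different sizes, but the rigidity needed to pin down $R$ — and hence to promote $\bar f$ into an honest order isomorphism — is the real heart of the argument.
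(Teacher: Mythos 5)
Your overall strategy has the same shape as the paper's argument: first show $f(X^{\perp})=f(X)^{\perp}$, then adjust $f$ pointwise by the orthogonal complementation so as to obtain a map that preserves orthogonality (equivalently, order) in both directions, and finally invoke Theorem \ref{theorem-pank1} (or Theorem \ref{theorem3-3}) together with a uniqueness argument to conclude. Your Step 1, via compatibility classes (the class of $X$ equals the class of $X^{\perp}$, and conversely equal classes force $Y\in\{X,X^{\perp}\}$), is a workable alternative to the paper's criterion, which instead characterizes $Y=X^{\perp}$ among compatible pairs by $|\{X,Y\}^{cc}\cap{\mathcal G}_{\infty}(H)|=2$; your separating-subspace construction would still need to be written out, but it is believable.

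The genuine gaps are exactly the two points you flag and then defer. First, the ``intrinsic compatibility-only characterization'' of the relation $R$ is not a heuristic to be granted: it is the paper's Lemma \ref{lemma3-2}, proved by counting $|\{X,Y\}^{cc}\cap{\mathcal G}_{\infty}(H)|$ (the values $4$ or $6$ occur precisely when one of the four pieces $Z_{i}$ vanishes, versus $8,12,14$ otherwise), and the count must be carried out inside ${\mathcal G}_{\infty}(H)$, where finite-dimensional pieces and their complements drop out; nothing in your sketch supplies this. Second, and more seriously, preservation of $R$ on the quotient ${\mathcal G}_{\infty}(H)/\!\sim$ does not by itself yield a ``careful choice of section'': $R$ only says that \emph{some} pair of representatives of $[f(X)],[f(Y)]$ is orthogonal or comparable, whereas you need a single global choice $\tilde f(X)\in\{f(X),f(X)^{\perp}\}$ making \emph{all} such pairs simultaneously orthogonal. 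This coherence problem is the heart of the paper's proof: $g(X)$ is defined through an orthogonal witness $Y\neq X^{\perp}$, and one must prove (a) independence of the witness, which the paper does by a case analysis on whether two witnesses are compatible, inserting an auxiliary $Y'$ non-compatible with both; (b) bijectivity of $g$, established by building the companion map $g'$ from $f^{-1}$ and checking $g'g=gg'=\mathrm{id}$; and (c) orthogonality preservation in both directions. None of these verifications are routine consequences of $R$-preservation, and your proposal contains no substitute for them, so as it stands the reduction to Theorem \ref{theorem-pank1} is not justified. (Your final step is fine: once $\tilde f$ is an order automorphism commuting with $\perp$ on ${\mathcal G}_{\infty}(H)$, the uniqueness clause of Theorem \ref{theorem-pank1} does force the extension to commute with $\perp$.)
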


\begin{rem}{\rm
Recall that ${\mathcal I}_{\infty}(H)$ is the set of all bounded idempotents on $H$
whose image and kernel both are infinite-dimensional (Remark \ref{rem-projections}).
In \cite{Plevnik}, L. Plevnik describes commutativity preserving bijective transformations of ${\mathcal I}_{\infty}(H)$.
We use the same arguments to prove Theorem \ref{theorem-plevnik}.
There is a natural one-to-one correspondence between idempotents and involutions (Example \ref{exmp-inv}) and
the mentioned above Plevnik's result can be reformulated in terms of commutativity preserving transformations of the set of bounded involutions
corresponding the idempotents from ${\mathcal I}_{\infty}(H)$.
In other words, this is an infinite-dimensional version of the description of
commutativity preserving bijective transformations of the sets of conjugate involutions  in the general linear group \cite{Pankov1}.
Earlier,  results of the same nature were exploited to determining automorphisms of classical groups, see \cite{D1,D2}.
}\end{rem}

\subsection{Proof of Theorems \ref{theorem-MS} and \ref{theorem-plevnik}}
For every subset ${\mathcal X}\subset {\mathcal L}(H)$ we denote by ${\mathcal X}^{c}$ the set of all elements of ${\mathcal L}(H)$
compatible to every element of ${\mathcal X}$ and write ${\mathcal X}^{cc}$ instead of $({\mathcal X}^{c})^{c}$.
Note that $0$ and $H$ always belong to ${\mathcal X}^{c}$.

Let $X$ and $Y$ be distinct compatible elements of ${\mathcal L}(H)$ both different from $0$ and $H$.
Then $H$ can be presented as the orthogonal sum of the following subspaces
$$Z_{1}=X\cap Y,\;Z_{2}=X^{\perp}\cap Y,\;Z_{3}=X\cap Y^{\perp},\;Z_{4}=X^{\perp}\cap Y^{\perp}$$
(some of them may be zero) and $\{X,Y\}^{cc}$ consists of all orthogonal sums 
$$\sum_{i\in I}Z_{i}\;\mbox{ with }\;I\subset\{1,2,3,4\}$$
(note that $0$ and $H$ correspond to the cases when $I=\emptyset$ and $I=\{1,2,3,4\}$, respectively). 
Some of these sums may be coincident and we have
$$|\{X,Y\}^{cc}|=2^{k},$$
where $k$ is the number of non-zero $Z_{i}$.

Using the number of elements in $\{X,Y\}^{cc}$, we can distinguish elements from ${\mathcal G}_{1}(H)\cup {\mathcal G}^{1}(H)$.

\begin{lemma}\label{lemma3-1}
If $X\in {\mathcal G}_{1}(H)\cup {\mathcal G}^{1}(H)$, then for every $Y\in{\mathcal L}(H)\setminus\{0,H\}$ compatible to $X$
we have 
$$|\{X,Y\}^{cc}|\in\{4,8\}.$$
In the case when $X\in{\mathcal L}(H)\setminus\{0,H\}$ does not belong to ${\mathcal G}_{1}(H)\cup {\mathcal G}^{1}(H)$, there is $Y\in{\mathcal L}(H)$ 
compatible to $X$ and such that
$$|\{X,Y\}^{cc}|=16.$$
\end{lemma}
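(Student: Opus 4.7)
The plan is to invoke directly the explicit description of $\{X,Y\}^{cc}$ recorded just before the statement: for distinct compatible elements $X,Y$ of ${\mathcal L}(H)\setminus\{0,H\}$ one has $|\{X,Y\}^{cc}|=2^{k}$, where $k$ is the number of non-zero subspaces among
$$Z_{1}=X\cap Y,\quad Z_{2}=X^{\perp}\cap Y,\quad Z_{3}=X\cap Y^{\perp},\quad Z_{4}=X^{\perp}\cap Y^{\perp}.$$
The whole argument then reduces to a short case analysis counting how many of the $Z_{i}$ vanish.

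For the first claim I would start with $X\in{\mathcal G}_{1}(H)$. Compatibility of $X$ with $Y$ forces the orthogonal decomposition $X=(X\cap Y)+(X\cap Y^{\perp})$, and since $\dim X=1$ exactly one of $Z_{1},Z_{3}$ equals $X$ and the other vanishes. If $X\subset Y$, then $Z_{1}=X\ne 0$, $Z_{3}=0$, $Z_{2}=Y\cap X^{\perp}\ne 0$ (because $Y\ne X$) and $Z_{4}=Y^{\perp}\ne 0$ (because $Y\ne H$), giving $k=3$ and cardinality $8$. If instead $X\perp Y$, then $Z_{1}=0$, $Z_{3}=X$, $Z_{2}=Y$, and $Z_{4}=X^{\perp}\cap Y^{\perp}$; the latter vanishes precisely when $Y=X^{\perp}$, producing $k=2$ and cardinality $4$ in that single borderline case, and $k=3$ and cardinality $8$ otherwise. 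The case $X\in{\mathcal G}^{1}(H)$ is immediate from the observation that commutativity of $P_{Z}$ with $P_{X}$ is the same as commutativity of $P_{Z}$ with $P_{X^{\perp}}=\mathrm{Id}-P_{X}$, so $\{X,Y\}^{cc}=\{X^{\perp},Y\}^{cc}$ and the count reduces to the previous paragraph since $X^{\perp}\in{\mathcal G}_{1}(H)$.

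For the second claim I would construct an explicit $Y$ realising all four $Z_{i}\ne 0$. Under the hypothesis, both $X$ and $X^{\perp}$ contain an orthonormal pair, so I pick $e_{1},e_{2}\in X$ and $f_{1},f_{2}\in X^{\perp}$ and set $Y=\mathrm{span}(e_{1},f_{1})$. This $Y$ is two-dimensional, hence closed; the projection $P_{Y}$ preserves the orthogonal decomposition $X\oplus X^{\perp}$, so it commutes with $P_{X}$, and Proposition \ref{prop3-2} then gives that $X$ and $Y$ are compatible. The four intersections $Z_{1},Z_{2},Z_{3},Z_{4}$ contain $e_{1},f_{1},e_{2},f_{2}$ respectively, so all four are non-zero, and therefore $k=4$ and $|\{X,Y\}^{cc}|=16$.

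There is no serious obstacle: the heavy lifting was done in the paragraph preceding the lemma, where $\{X,Y\}^{cc}$ was already identified with the set of orthogonal sums of subsets of $\{Z_{1},Z_{2},Z_{3},Z_{4}\}$. The only point that genuinely deserves care is verifying that the alternatives $X\subset Y$ and $X\perp Y$ exhaust the compatible $Y$ when $\dim X=1$, which follows from $X=(X\cap Y)\oplus(X\cap Y^{\perp})$ combined with $\dim X=1$.
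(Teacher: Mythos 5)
Your argument is correct and is essentially the paper's own: both parts rest on the $2^{k}$ description of $\{X,Y\}^{cc}$ recorded just before the lemma, the first claim via a case count of the nonzero $Z_{i}$ when $X$ has dimension or codimension $1$, the second by exhibiting a compatible $Y$ with all four $Z_{i}$ nonzero (the paper merely asserts such a $Y$ exists, while you construct one explicitly, which is if anything cleaner). One pedantic remark: your reduction of $X\in{\mathcal G}^{1}(H)$ to $X^{\perp}\in{\mathcal G}_{1}(H)$ does not literally cover the admissible choice $Y=X^{\perp}$, since the reduced pair then consists of two equal subspaces and falls outside the dichotomy of your first paragraph; that instance is settled at once by applying the formula directly to the distinct pair $(X,X^{\perp})$, where $Z_{1}=Z_{4}=0$ and $|\{X,X^{\perp}\}^{cc}|=4$.
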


\begin{proof}
Suppose that $X$ is an element of ${\mathcal G}_{1}(H)\cup {\mathcal G}^{1}(H)$. 
If $Y$ is the orthogonal complement of $X$, then $Z_{1}=Z_{4}=0$ and $\{X,Y\}^{cc}$ consists of $4$ elements.
For all other cases, there are precisely tree non-zero $Z_{i}$ and the number of elements in $\{X,Y\}^{cc}$ is equal to $8$.

Suppose that $X\in{\mathcal L}(H)\setminus\{0,H\}$ does not belong to ${\mathcal G}_{1}(H)\cup {\mathcal G}^{1}(H)$.
We take any $Y\in {\mathcal L}(H)$ compatible to $X$ and such that $X\cap Y$ is non-zero and $X+Y$ is a proper subspace of $H$.
Then all $Z_{i}$ are non-zero and the number of elements in $\{X,Y\}^{cc}$ is equal to $16$.
\end{proof}

To prove Theorem \ref{theorem-plevnik} we will consider the intersection of  ${\mathcal G}_{\infty}(H)$
with $\{X,Y\}^{cc}$ for $X,Y\in {\mathcal G}_{\infty}(H)$.

\begin{lemma}\label{lemma3-2}
Suppose that $H$ is infinite-dimensional.
If $X,Y$ are distinct compatible elements from ${\mathcal G}_{\infty}(H)$ 
and $Y\ne X^{\perp}$, then the following two conditions are equivalent:
\begin{enumerate}
\item[(1)] $|\{X,Y\}^{cc}\cap {\mathcal G}_{\infty}(H)|\in \{4,6\}$,
\item[(2)] $X\perp Y$ or $X^{\perp}\perp Y$ or $X\perp Y^{\perp}$ or $X^{\perp}\perp Y^{\perp}$.
\end{enumerate}
\end{lemma}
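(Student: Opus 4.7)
The plan is to reduce the statement to a finite combinatorial count based on the explicit parametrization of $\{X,Y\}^{cc}$ recalled just before the lemma. Set $Z_1 = X \cap Y$, $Z_2 = X^\perp \cap Y$, $Z_3 = X \cap Y^\perp$, $Z_4 = X^\perp \cap Y^\perp$, let $k$ be the number of non-zero $Z_i$, and let $J \subset \{1,2,3,4\}$ be the set of indices $i$ with $\dim Z_i = \infty$, $j = |J|$. The first observation is that, because $X$ and $Y$ are compatible, $X \perp Y$ is equivalent to $X \cap Y = 0$, i.e.\ $Z_1 = 0$, and the three other orthogonality relations in (2) correspond analogously to $Z_2 = 0$, $Z_3 = 0$, $Z_4 = 0$. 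So (2) is equivalent to some $Z_i$ being zero, i.e.\ to $k \leq 3$.

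Next I count. A sum $\sum_{i \in I'} Z_i$ with $I' \subset \{i : Z_i \neq 0\}$ lies in $\mathcal{G}_\infty(H)$ exactly when it has both infinite dimension and infinite codimension, i.e.\ when $I' \cap J \neq \emptyset$ and $J \not\subset I'$. Since $X \in \mathcal{G}_\infty(H)$ forces $j \geq 1$, a routine inclusion-exclusion gives
\[
|\{X,Y\}^{cc} \cap \mathcal{G}_\infty(H)| = 2^k - 2\cdot 2^{k-j}.
\]

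It remains to determine which pairs $(k,j)$ can occur. The hypotheses $X \neq Y$ and $Y \neq X^\perp$ require $\{Z_2,Z_3\}$ and $\{Z_1,Z_4\}$ each to contain a non-zero element, while the requirement $X, Y, X^\perp, Y^\perp \in \mathcal{G}_\infty(H)$ is exactly the condition that $J$ be a vertex cover of the $4$-cycle on $\{1,2,3,4\}$ with edges $\{1,3\}, \{1,2\}, \{2,4\}, \{3,4\}$. A direct check of the six two-element configurations rules out $k \leq 2$ (each forces $X$ or $Y$ to be $0$ or $H$, or forces $X = Y$ or $Y = X^\perp$), and the vertex-cover condition rules out $j = 1$. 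When $k = 3$ one finds $j \in \{2, 3\}$, so the count is $4$ or $6$; when $k = 4$ one finds $j \in \{2, 3, 4\}$, so the count is $8$, $12$, or $14$. Comparison with the reformulation of (2) as $k \leq 3$ then yields the equivalence.

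The main obstacle is the case analysis of the last step: verifying that $k \leq 2$ is incompatible with the standing hypotheses, and enumerating the admissible values of $j$ in each of the cases $k = 3$ and $k = 4$. The counting formula and the translation of the orthogonality conditions are elementary.
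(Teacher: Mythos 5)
Your proposal is correct and follows essentially the same route as the paper: both work with the decomposition $Z_1=X\cap Y$, $Z_2=X^{\perp}\cap Y$, $Z_3=X\cap Y^{\perp}$, $Z_4=X^{\perp}\cap Y^{\perp}$, identify condition (2) with having exactly three non-zero $Z_i$ (after excluding $k\le 2$ via $X\ne Y$, $Y\ne X^{\perp}$ and $X,Y\notin\{0,H\}$), and then count which orthogonal sums $\sum_{i\in I'}Z_i$ lie in ${\mathcal G}_{\infty}(H)$, arriving at the same value sets $\{4,6\}$ versus $\{8,12,14\}$. The only difference is presentational: you package the paper's case-by-case count as the closed formula $2^{k}-2\cdot 2^{k-j}$ and its observation that at most one (resp.\ two) of the non-zero $Z_i$ can be finite-dimensional as a vertex-cover condition on the $4$-cycle, which is a tidy but equivalent bookkeeping of the same argument.
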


\begin{proof}
The assumption $Y\ne X^{\perp}$ implies that  at least three $Z_{i}$ are non-zero.

The condition (2) is equivalent to the fact that one of $Z_{i}$ is zero, i.e. there are precisely three non-zero $Z_i$.
We have
$$X=Z_{1}+Z_{3},\;X^{\perp}=Z_{2}+Z_{4}\;\mbox{ and }\;Y=Z_{1}+Z_{2},\;Y^{\perp}=Z_{3}+Z_{4}.$$
This guarantees that at most one of non-zero $Z_{i}$ is finite-dimensional
(otherwise, at most one of $Z_{i}$ is infinite-dimensional and $X$ or $Y$ has finite dimension or codimension which is impossible).
If all non-zero $Z_{i}$ are infinite-dimensional, then
$$|\{X,Y\}^{cc}\cap {\mathcal G}_{\infty}(H)|=6$$
(each non-zero $Z_{i}$ and the sum of any two non-zero $Z_i$ are elements of ${\mathcal G}_{\infty}(H)$).
If one of non-zero $Z_i$ is finite-dimensional, 
then it and its orthogonal complement do not belong to ${\mathcal G}_{\infty}(H)$ and we have
$$|\{X,Y\}^{cc}\cap {\mathcal G}_{\infty}(H)|=4.$$

If (2) fails, then all $Z_{i}$ are non-zero.
In this case, there are at most two finite-dimensional $Z_{i}$  
(otherwise, at most one of $Z_{i}$ is infinite-dimensional and $X$ or $Y$ has finite dimension or codimension).
If all $Z_{i}$ are infinite-dimensional, then every orthogonal sum $\sum_{i\in I}Z_{i}$, where $I$ is a proper subset of $\{1,2,3,4\}$,
belongs to ${\mathcal G}_{\infty}(H)$ and 
$$|\{X,Y\}^{cc}\cap {\mathcal G}_{\infty}(H)|=14.$$
If only one of $Z_{i}$ is finite-dimensional, then it and its orthogonal complement do not belong to ${\mathcal G}_{\infty}(H)$ and we have
$$|\{X,Y\}^{cc}\cap {\mathcal G}_{\infty}(H)|=12.$$
Similarly, if $Z_{i}$ and $Z_{j}$ are finite-dimensional, 
then $Z_{i},Z_{j},Z_{i}+Z_{j}$ ant their orthogonal complements do not belong to ${\mathcal G}_{\infty}(H)$
which means that
$$|\{X,Y\}^{cc}\cap {\mathcal G}_{\infty}(H)|=8.$$
We get the claim.
\end{proof}

\begin{proof}[Proof of Theorem \ref{theorem-MS}]
Let $f$ is a bijective transformation of ${\mathcal L}(H)$ preserving the compatibility relation in both directions.
If a closed subspace is compatible to all elements of ${\mathcal L}(H)$, then this subspace is $0$ or $H$.
Therefore, $f$ transfers the set $\{0,H\}$ to itself.
Lemma \ref{lemma3-1} implies that $f$ also sends ${\mathcal G}_{1}(H)\cup {\mathcal G}^{1}(H)$ to itself.
The case when $\dim H=2$ is trivial and we suppose that $\dim H\ge 3$.

Consider the bijective transformation $h$ of ${\mathcal G}_{1}(H)$ defined as follows
$$h(X)=\begin{cases}
f(X)&\mbox{if }\;\; f(X)\in {\mathcal G}_{1}(H)\\
f(X)^{\perp}&\mbox{if }\;\; f(X)\in {\mathcal G}^{1}(H)\,.
\end{cases}$$
Then $h$ preserves the orthogonality relation in both directions and, by Proposition \ref{prop3-1},
it can be extended to a certain automorphism $g$ of the logic ${\mathcal L}(H)$.
The transformation $g^{-1}f$ preserves the compatibility relation in both directions.
Also, $g^{-1}f$ leaves fixed every $X\in {\mathcal G}_{1}(H)$ or sends it to the orthogonal complement $X^{\perp}$.
Therefore, $Y\in {\mathcal L}(H)$ is compatible to $X\in {\mathcal G}_{1}(H)$ if and only if $g^{-1}f(Y)$ is compatible to $X$
(any element of ${\mathcal L}(H)$  is compatible to $X$ if and only if it is compatible to $X^{\perp}$).
A $1$-dimensional subspace is compatible to $Y$ if and only if it is contained in $Y$ or $Y^{\perp}$.
Therefore, $g^{-1}f(Y)$ coincides with $Y$ or $Y^{\perp}$ for every $Y\in {\mathcal L}(H)$.
So, the logic automorphism $g$ is as required. 
\end{proof}

\begin{proof}[Proof of Theorem \ref{theorem-plevnik}]
Suppose that $H$ is infinite-dimensional and
$f$ is a bijective transformation of ${\mathcal G}_{\infty}(H)$ preserving the compatibility relation in both directions.
For distinct compatible elements $X,Y\in {\mathcal G}_{\infty}(H)$ we have 
$$|\{X,Y\}^{cc}\cap {\mathcal G}_{\infty}(H)|=2$$
if and only if $Y$ is the orthogonal complement of $X$. 
Therefore, $f$ preserves the orthogonal complementation, i.e. 
$$f(X^{\perp})=f(X)^{\perp}$$ 
for every $X\in {\mathcal G}_{\infty}(H)$.

We will construct a bijective transformation $g$ of ${\mathcal G}_{\infty}(H)$ satisfying the follo\-wing conditions:
\begin{enumerate}
\item[$\bullet$] $g$ is orthogonality preserving in both directions,
\item[$\bullet$] for every $X\in {\mathcal G}_{\infty}(H)$ we have $g(X)=f(X)$ or $g(X)=f(X)^{\perp}$.
\end{enumerate}
Theorem \ref{theorem3-3} states that $g$ can be extended to a logic automorphism and we get the claim.

Let $X\in {\mathcal G}_{\infty}(H)$. We take any $Y\in {\mathcal G}_{\infty}(H)$ orthogonal to $X$ and distinct from $X^{\perp}$.
By Lemma \ref{lemma3-2}, one of the following two possibilities is realized:
\begin{enumerate}
\item[(1)] $f(X)$ is orthogonal to $f(Y)$ or $f(Y)^{\perp}$,
\item[(2)] $f(X)^{\perp}$ is orthogonal to $f(Y)$ or $f(Y)^{\perp}$.
\end{enumerate}
In the first case, we set $g(X)=f(X)$.
In the second case, we define $g(X)$ as the orthogonal complement of $f(X)$.
We need to show that the definition of $g(X)$ does not depend on the choice of element $Y\ne X^{\perp}$ orthogonal to $X$.
In other words, if the possibility $(i)$, $i\in\{1,2\}$ is realized for a certain $Y\in {\mathcal G}_{\infty}(H)\setminus\{X^{\perp}\}$ orthogonal to $X$,
then the same possibility is realized for all such $Y$.

Let $Y$ and $Z$ be distinct elements of ${\mathcal G}_{\infty}(H)$ orthogonal to $X$ and distinct from $X^{\perp}$. 
First, we consider the case when $Y$ and $Z$ are non-compatible.
Suppose that $f(X)$ is orthogonal to $f(Y)$ or $f(Y)^{\perp}$ and $f(X)^{\perp}$ is orthogonal to $f(Z)$ or $f(Z)^{\perp}$.
In other words, one of $f(Y),f(Y)^{\perp}$ is contained in $f(X)^{\perp}$
and one of $f(Z),f(Z)^{\perp}$ is contained in $f(X)$.
This means that one of $f(Y),f(Y)^{\perp}$ is compatible to one of $f(Z),f(Z)^{\perp}$.
Since $f$ is compatibility preserving in both directions, 
one of $Y,Y^{\perp}$ is compatible to one of $Z,Z^{\perp}$ which contradicts the fact that $Y$ and $Z$ are non-compatible.
Therefore, $f(X)$ is orthogonal to $f(Y)$ or $f(Y)^{\perp}$ if and only if it is orthogonal to $f(Z)$ or $f(Z)^{\perp}$.

In the case when $Y$ and $Z$ are compatible, we choose any $Y'\in {\mathcal G}_{\infty}(H)\setminus\{X^{\perp}\}$ orthogonal to $X$ 
and non-compatible to both $Y$ and $Z$.
By the arguments given above, the following three conditions are equivalent:
\begin{enumerate}
\item[$\bullet$] $f(X)$ is orthogonal to $f(Y)$ or $f(Y)^{\perp}$,
\item[$\bullet$] $f(X)$ is orthogonal to $f(Y')$ or $f(Y')^{\perp}$,
\item[$\bullet$] $f(X)$ is orthogonal to $f(Z)$ or $f(Z)^{\perp}$.
\end{enumerate}
So, the transformation $g$ is well-defined.

Since $f$ is bijective, we have $g(X)\ne g(Y)$ in the case when $Y\ne X^{\perp}$.
It is easy to see that $g(X^{\perp})$ coincides with $g(X)$ or $g(X)^{\perp}$,
but we cannot state that $g(X)\ne g(X^{\perp})$ at this moment.
We need to show that $g$ is bijective.
Let us consider the inverse transformation $f^{-1}$ and the associated transformation $g'$ defined as $g$ for $f$.

Let $X\in {\mathcal G}_{\infty}(H)$ and $X'=g(X)$. Suppose that $g(X)=f(X)$.
Then for every $Y\in {\mathcal G}_{\infty}(H)\setminus \{X^{\perp}\}$ orthogonal to $X$ we have 
$$f(X)\perp f(Y)\;\mbox{ or }\;f(X)\perp f(Y)^{\perp}.$$
If $f(X)$ is orthogonal to $f(Y)$, then we consider $Y'=f(Y)$ which is orthogonal to $X'$ and distinct from $X'^{\perp}$.
Since $f^{-1}(X')=X$ and $f^{-1}(Y')=Y$ are orthogonal, we have $g'(X')=X$.
In the case when $f(X)$ is orthogonal to $f(Y)^{\perp}$, we take $Y'=f(Y)^{\perp}$.
As above, $Y'$ is orthogonal to $X'$ and distinct from $X'^{\perp}$.
Since $f^{-1}(X')=X$ is orthogonal to $f^{-1}(Y')^{\perp}=Y$, we get $g'(X')=X$ again.

Now, we suppose that $g(X)=f(X)^{\perp}$.
Then for every $Y\in {\mathcal G}_{\infty}(H)\setminus \{X^{\perp}\}$ orthogonal to $X$ we have 
$$f(X)^{\perp}\perp f(Y)\;\mbox{ or }\;f(X)^{\perp}\perp f(Y)^{\perp}.$$
Consider the first possibility (the second is similar).
In this case, $Y'=f(Y)$ is orthogonal to $X'=f(X)^{\perp}$ and distinct from $X'^{\perp}$.
Then $f^{-1}(X')^{\perp}=X$ and $f^{-1}(Y')=Y$ are orthogonal. 
This means that $g'(X')=X$.

So, for every $X\in {\mathcal G}_{\infty}(H)$ we have $g'g(X)=X$ and the same arguments show that $gg'(X)=X$.
Therefore, $g$ is bijective which guarantees that 
$$g(X^{\perp})=g(X)^{\perp}$$
for every $X\in {\mathcal G}_{\infty}(H)$. 
To complete the proof we need to establish that $g$ is orthogonality preserving in both directions.

Suppose that $X,Y\in {\mathcal G}_{\infty}(H)$ are orthogonal and $Y\ne X^{\perp}$.
If $g(X)=f(X)$, then $f(X)$ is orthogonal to $f(Y)$ or $f(Y)^{\perp}$ and 
$$g(Y)=f(Y)\;\mbox{ or }\;g(Y)=f(Y)^{\perp},$$
respectively. For each of these cases we have $g(X)\perp g(Y)$.
The case when $g(X)=f(X)^{\perp}$ is similar.
Therefore, $g$ sends orthogonal pairs to orthogonal pairs. 
It is clear that the same holds for $g'=g^{-1}$ and $g$ is orthogonality preserving in both directions.
\end{proof}

\subsection{Kakutani-Mackey theorem}
Let $V$ be a complex normed space.
Denote by ${\mathcal L}_{c}(V)$ the associated lattice of closed subspaces,
i.e. the set of all closed subspaces of $V$ partially ordered by the inclusion relation $\subseteq$.
As in the lattices of closed subspaces of Hilbert spaces,
for any two closed subspaces $X,Y\subset V$ the greatest lower bound is the intersection $X\cap Y$ and 
the least upper bound is the minimal closed subspace containing $X+Y$.
The lattice ${\mathcal L}_{c}(V)$ is bounded: the least element is $0$ and the greatest element is $V$.
 
The direct analogue of Theorem \ref{theorem-mackey} holds for 
the lattices of closed subspaces of infinite-dimensional normed vector spaces.
The proof is similar, but we need some additional arguments which hold automatically for Hilbert spaces.

The following classical result says that for an infinite-dimensional complex Banach space $V$ the lattice ${\mathcal L}_c(V)$
together with an orthogonal complementation satisfying the logic axioms (1) and (2) is the standard quantum logic.

\begin{theorem}[S. Kakutani and G. W. Mackey \cite{KakutaniMackey}]\label{theorem-km}
Let $V$ be an infinite-dimen\-sional complex Banach space.
Suppose that there is a bijective transformation $X\to X^{\perp}$ of the lattice ${\mathcal L}_c(V)$
satisfying the following conditions:
\begin{enumerate}
\item[(1)] for any $X,Y\in {\mathcal L}_{c}(V)$ the inclusion
$X\subset Y$ implies that $Y^{\perp}\subset X^{\perp}$,
\item[(2)] $X^{\perp\perp}=X$ and $X\cap X^{\perp}=0$ for every $X\in {\mathcal L}_{c}(V)$.
\end{enumerate}
Then there is an inner product $V\times V\to {\mathbb C}$ such that the following assertions are fulfilled:
\begin{enumerate}
 \item[$\bullet$] The vector space $V$ together with this inner product
is a complex Hilbert space.
\item[$\bullet$] The identity transformation of $V$ is an invertible bounded linear operator
of the Banach space to the Hilbert space, 
i.e. a subspace of $V$ is closed in the Banach space if and only if it is closed in the Hilbert space\footnote{We cannot state that 
the norm related to the inner product coincides with the primordial norm, but these norms define the same topology on $V$.}.
\item[$\bullet$] For every $X\in {\mathcal L}_{c}(V)$ the subspace $X^{\perp}$ is the orthogonal complement of $X$ in the Hilbert space. 
\end{enumerate}
In other word, the lattice ${\mathcal L}_{c}(V)$ together with this orthogonal complementation is the standard quantum logic. 
\end{theorem}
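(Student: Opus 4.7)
The plan is to build the inner product in three stages: first, show that the complementation produces topological direct sum decompositions; second, use these to obtain bounded linear functionals attached to every one-dimensional subspace; third, assemble these functionals into an inner product and check that its norm is equivalent to the given one.

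First I would prove that for every closed subspace $X \subset V$ we have $V = X \dotplus X^{\perp}$ (topological direct sum). The axioms give $X \cap X^{\perp} = 0$ directly. For density, set $Y = \overline{X + X^{\perp}}$; since $X, X^{\perp} \subset Y$, axiom (1) and $X^{\perp\perp} = X$ give $Y^{\perp} \subset X^{\perp} \cap X = 0$, and then $Y = Y^{\perp\perp} = 0^{\perp} = V$. To upgrade density to equality, I would consider the candidate projection $P_X$ defined on $X + X^{\perp}$ with image $X$ and kernel $X^{\perp}$, and apply the closed graph theorem to conclude that $P_X$ extends to a bounded operator on $V$, which forces $X + X^{\perp} = V$. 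A parallel argument shows each $P_X$ is a bounded idempotent.

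Second, applied to a one-dimensional subspace $\mathbb{C}x$, this produces a bounded linear functional $\phi_x \in V^{*}$ characterized by $P_{\mathbb{C}x}(y) = \phi_x(y)\,x$, so that $\ker \phi_x = (\mathbb{C}x)^{\perp}$ and $\phi_x(x) = 1$. The axiom $X^{\perp\perp} = X$ gives the key symmetry $x \in (\mathbb{C}y)^{\perp} \Leftrightarrow y \in (\mathbb{C}x)^{\perp}$. After a suitable rescaling $x \mapsto c_x \phi_x$, I would define the candidate inner product $\langle y, x\rangle := c_x\phi_x(y)$ and show that the map $x \mapsto c_x\phi_x$ is additive and has the right scalar homogeneity.

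The main obstacle is the third stage: verifying that $\langle \cdot,\cdot\rangle$ is genuinely sesquilinear, Hermitian, and positive. Linearity in the first variable is immediate from the linearity of $\phi_x$. The behavior in the second variable is governed by how $\phi_{x+y}$ and $\phi_{ax}$ relate to $\phi_x$ and $\phi_y$; comparing the kernels $(\mathbb{C}(x+y))^{\perp}$ and $(\mathbb{C}ax)^{\perp}$ with those of $\phi_x, \phi_y$ and using the symmetry relation should determine an automorphism $\sigma$ of $\mathbb{C}$ with $\phi_{ax} = \sigma(a)\phi_x$. Boundedness of all the $\phi_x$ together with Lemma \ref{lemma2-1} (via Lemma \ref{lemma2-2}) forces $\sigma$ to be the identity or the conjugation. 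The identity case would make $\langle \cdot,\cdot\rangle$ a symmetric bilinear form; but in a complex space this would contradict $\phi_x(x) = 1 \neq 0$ for all nonzero $x$ (one would obtain a nontrivial totally isotropic direction), leaving the Hermitian case. Positivity of $\langle x,x\rangle$ (up to a global real sign absorbed into the choice of $c_x$) follows from $\phi_x(x) \neq 0$, and a uniform boundedness argument applied to the family $\{\phi_x\}$ yields an estimate $|\langle y,x\rangle| \le K\|y\|\|x\|$, giving $\|\cdot\|_{\langle\cdot,\cdot\rangle} \le C\|\cdot\|$.

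Finally, the identity map from the Banach space to the pre-Hilbert space is bounded by the previous estimate; its inverse is bounded by the open mapping theorem once completeness of $V$ in the Hilbert norm is verified (completeness follows because Cauchy sequences in the Hilbert norm are Cauchy in the Banach norm, and their Banach limit is shown to be the Hilbert limit using continuity of the $\phi_x$). Equivalent topologies mean the two notions of "closed subspace" coincide, and by construction $X^{\perp}$ in the logic agrees with the Hilbert-space orthogonal complement on every one-dimensional subspace, hence on all closed subspaces by the double-complement axiom. The decisive difficulty throughout is the passage from a well-behaved complementation on subspaces to a sesquilinear form on vectors: all the real content lies in pinning down the field automorphism $\sigma$ and ruling out the bilinear alternative.
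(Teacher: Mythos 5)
Your overall skeleton (attach to each atom $\mathbb{C}x$ the bounded functional $\phi_x$ with kernel $(\mathbb{C}x)^{\perp}$, assemble these into a semilinear map into $V^{*}$, rule out the bilinear case by algebraic closedness of $\mathbb{C}$, then check Hermitian positivity and completeness) is the same route the paper takes, but two of your steps do not work as stated.

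First, Stage 1 fails. The closed graph theorem applies to operators whose domain is a Banach space; your $P_X$ is defined only on $X+X^{\perp}$, which at that point is known only to be a \emph{dense} subspace, so a closed graph there gives no boundedness (densely defined operators with closed graph can be unbounded), and no soft argument of this kind can yield $X+X^{\perp}=V$: two closed subspaces with trivial intersection and dense sum need not span a Banach space, and the equality $V=X\dotplus X^{\perp}$ for all closed $X$ is essentially the orthomodularity of the lattice, which becomes available only a posteriori, after the inner product has been constructed. Fortunately you only use the one-dimensional case, where nothing of the sort is needed: $(\mathbb{C}x)^{\perp}$ is a coatom, hence a closed hyperplane not containing $x$, so $V=\mathbb{C}x\oplus(\mathbb{C}x)^{\perp}$ algebraically, and $\phi_x$ is bounded because its kernel is closed. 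Stage 1 should simply be removed.

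Second, the decisive content is hidden in your ``should'' clauses. Choosing the scalars $c_x$ so that $x\mapsto c_x\phi_x$ becomes additive and $\sigma$-semilinear is exactly the Fundamental Theorem of Projective Geometry: comparing kernels determines $\phi_{x+y}$ and $\phi_{ax}$ only up to unknown scalars, and making all these scalars coherent is the whole difficulty. The paper handles this by checking that $\mathbb{C}x\mapsto\mathbb{C}\phi_x$ (the annihilator of $(\mathbb{C}x)^{\perp}$) is an isomorphism of $\Pi_{V}$ onto $\Pi_{V^{*}}$ and invoking Theorem \ref{theorem-FTPG}; you need either that or an equivalent argument. Likewise, conjugate-linearity in the second variable does not give Hermitian symmetry or definiteness: the axioms yield only $\langle x,y\rangle=0\Leftrightarrow\langle y,x\rangle=0$ and $\langle x,x\rangle\neq 0$, and passing from this to $\langle x,y\rangle=\overline{\langle y,x\rangle}$ and $\langle x,x\rangle>0$ requires the computation \eqref{eq-km1}--\eqref{eq-km3}, the propagation of realness of $\langle x,x\rangle$ through vectors with nonzero pairing, and the continuity argument with $h(t)=\langle tx+(1-t)x_{0},tx+(1-t)x_{0}\rangle$, which in turn needs boundedness of $L$; ``positivity follows from $\phi_x(x)\neq 0$'' is not an argument. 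Your uniform boundedness step also lacks its hypothesis (Banach--Steinhaus needs pointwise boundedness of the family, which is not known a priori); the paper instead gets boundedness from the normed-space analogue of Lemma \ref{lemma2-4} together with the argument in the proof of Theorem \ref{theorem-mackey}. The elimination of the bilinear alternative via a root of the quadratic is correct and matches the paper.
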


\begin{proof}[Sketch of proof]
Let $V^{*}$ be the vector space formed by all bounded linear functionals on $V$.
This is a normed vector space:
the norm of $l\in V^{*}$ is the smallest number $a$ such that 
$$||l(x)||\le a||x||$$ for all vectors $x\in V$.
For every $X\in {\mathcal L}_{c}(V)$ we denote by $X^{0}$ the annihilator of $X^{\perp}$ in $V^{*}$, 
i.e. the  set of all bounded linear functionals $l\in V^{*}$ satisfying $l(X^{\perp})=0$.
This is a closed subspace of $V^{*}$.

Using (1), we establish that the bijection of ${\mathcal G}_{1}(V)$ to ${\mathcal G}_{1}(V^{*})$
sending every $X$ to $X^{0}$ is an isomorphism of $\Pi_{V}$ to $\Pi_{V^{*}}$.
By Theorem \ref{theorem-FTPG}, there is a semilinear isomorphism $L:V\to V^{*}$  such that 
$$L(X)=X^{0}$$
for every $X\in {\mathcal G}_{1}(V)$.
It is not difficult to see that the same equality holds for all $X\in {\mathcal L}_{c}(V)$.
In particular, $L$ sends closed subspaces of codimension $1$ to closed subspaces. 
There is the direct analogue of Lemma \ref{lemma2-4} for infinite-dimensional normed vector spaces \cite[Lemma 2]{KakutaniMackey}.
Therefore, $L$ is a linear or conjugate-linear invertible bounded operator. 

Suppose that $L$ is linear.
Let $x$ and $y$ be linearly independent vectors of $V$.
We set $l=L(x)$ and $s=L(y)$. Then 
$$L(x+ay)=l+as.$$
By (2), we have $X\cap X^{\perp}=0$ for every $X\in {\mathcal L}_{c}(X)$.
This implies that each of the scalars 
$$l(x),\;s(y),\;(l+as)(x+ay)$$ is non-zero.
On the other hand, we have
$$(l+as)(x+ay)=l(x)+a(l(y)+s(x))+a^2s(y)$$
and the equation 
$$l(x)+a(l(y)+s(x))+a^2s(y)=0$$ has a solution for $a$.
We get a contradiction which means that $L$ is conjugate-linear.

Now, we define the inner product on $V$. For all vectors $x,y\in V$ we set 
$$\langle x,y \rangle=l(x),\;\mbox{ where }\;l=L(y).$$
The condition (2) guarantees that $\langle x,x\rangle$ is non-zero for every non-zero vector $x\in V$.
Since $L$ is unique up to a non-zero scalar multiple, we can assume that for a certain vector $x_{0}\in V$
the scalar $\langle x_{0},x_{0}\rangle$ is a positive real number.

It clear that $x\to \langle x,y\rangle$ is linear and $x\to \langle y,x\rangle$ is conjugate-linear for every fixed $y\in V$.
We need to show that 
\begin{equation}\label{eq3-1}
\langle x,y\rangle=\overline{\langle y,x\rangle}
\end{equation}
for all $x,y\in X$ and $\langle x,x\rangle$ is a positive real number for every non-zero $x\in V$.

It easily follows from (1) that for any two vectors $x,y\in V$ we have $\langle x,y\rangle =0$ if and only if $\langle y,x\rangle=0$.
Suppose that $\langle x,y \rangle$ is non-zero. We choose non-zero scalars $a,b\in {\mathbb C}$ such that 
\begin{equation}\label{eq-km1}
a\langle x,x\rangle + \langle x,y\rangle =0=b\langle y,y\rangle + \langle x,y\rangle.
\end{equation} 
Then $\langle x, \overline{a}x+y\rangle=0$ which implies that $\langle \overline{a}x+y, x\rangle=0$ and 
$\overline{\langle \overline{a}x+y, x\rangle}=0$, i.e. 
\begin{equation}\label{eq-km2}
a\overline{\langle x,x \rangle} + \overline{\langle y,x\rangle}=0.
\end{equation}
Similarly, we obtain that 
\begin{equation}\label{eq-km3}
b\overline{\langle y,y \rangle} + \overline{\langle y,x\rangle}=0.
\end{equation}
Using \eqref{eq-km1}--\eqref{eq-km3}, we establish that 
$$\overline{\langle x,x \rangle}:\langle x,x \rangle=\overline{\langle y,y \rangle}:\langle y,y \rangle.$$
In other words, for any two vectors $x,y\in V$ satisfying $\langle x,y\rangle\ne 0$ 
the scalar $\langle x,x\rangle$ is real if and only if $\langle y,y\rangle$ is real.
Recall that there is non-zero $x_{0}\in V$ such that $\langle x_{0},x_{0}\rangle$ is real.
Then $\langle x,x\rangle$ is real if $\langle x_{0},x\rangle$ is non-zero. 
In the case when $\langle x_{0},x\rangle =0$, we take any vector $y\in V$ such that $\langle x_{0},y\rangle$ and $\langle x,y\rangle$ both are non-zero.
So,  for every non-zero vector $x\in V$ the scalar $\langle x,x\rangle$ is a non-zero real number.
Then \eqref{eq-km1} and \eqref{eq-km2} imply \eqref{eq3-1}.
For every non-zero $x\in V$ we consider the real function 
$$h(t)=\langle tx+(1-t)x_{0},tx+(1-t)x_{0}\rangle$$
defined on the segment $[0;1]$. 
The function is continuos (the operator $L$ is bounded) and $h(t)$ is non-zero for every $t\in [0;1]$. Since $h(0)>0$, we have always $h(t)>0$.

Therefore, $\langle\cdot,\cdot \rangle$ is an inner product on $V$.
Using the fact that $L$ is bounded, the readers can show that 
the identity transformation of $V$ is an invertible bounded linear operator
of the Banach space to the normed vector space related to the inner product $\langle\cdot,\cdot \rangle$.
This implies that the norm defined by the inner product is complete.
\end{proof}


\section{Grassmannians of vector spaces}

In this section, we consider the transformations of Grassmannians of vector spaces induced by semilinear isomorphisms and 
present some characterizations of such transformations.
One of them is well-known Chow's theorem \cite{Chow}. We will need it in the next section.
Also, these transformations can be characterized as apartments preserving. 
This result is not exploited in what follows,  but we will use the same idea to study compatibility preserving transformations.

\subsection{Chow's theorem}
Let $V$ be a vector space over a field. 
Recall that for every natural $k<\dim V$ we denote by ${\mathcal G}_{k}(V)$ the Grassmannian formed by $k$-dimensional subspaces of $V$.
Two $k$-dimensional subspaces of $V$ are called {\it adjacent} if their intersection is $(k-1)$-dimensional.
This is equivalent to the fact that the sum of these subspaces is $(k+1)$-dimensional.
Any two distinct $1$-dimensional subspaces of $V$ are adjacent.
If $\dim V=n$ is finite, then the same holds for any two distinct $(n-1)$-dimensional subspaces of $V$.

The {\it Grassmann graph} $\Gamma_{k}(V)$ is the graph whose vertex set is the Grassmannian ${\mathcal G}_{k}(V)$
and whose edges are pairs of adjacent $k$-dimensional subspaces.
This graph is connected, i.e. for any $X,Y\in {\mathcal G}_{k}(V)$ there is a sequence 
$$X=X_{0},X_{1},\dots,X_{i}=X,$$
where $X_{j-1}$ and $X_{j}$ are adjacent elements of ${\mathcal G}_{k}(V)$ for every $j\in \{1,\dots,i\}$.
The smallest number $i$ for which such a sequence exists is called the {\it path distance} between $X$ and $Y$ (see, for example, \cite[Section 15.1]{DD}), 
we denote this distance  by $d(X,Y)$.
It is not difficult to prove that 
$$d(X,Y)=k-\dim(X\cap Y)=\dim(X+Y)-k.$$
Every semilinear automorphism of $V$ induces an automorphism of the Grassmann graph $\Gamma_{k}(V)$.

Denote by $V^{*}$ the dual vector space formed by all linear functionals on $V$.
If $V$ is finite-dimensional, then $\dim V=\dim V^{*}$ and the second dual vector space $V^{**}$ can be naturally identified with $V$.
In the case when $V$ is infinite-dimensional, we have $\dim V<\dim V^{*}$, see \cite[Section II.3]{Baer}.
For every subset $X\subset V$  the {\it annihilator} 
$$X^{0}=\{x^{*}\in V^{*}: x^{*}(x)=0\mbox{ for all }x\in X\}$$
is a subspace of $V^{*}$.
Similarly, for every subset $Y\subset V^{*}$ the {\it annihilator}
$$Y^{0}=\{x\in V: x^{*}(x)=0\mbox{ for all }x^{*}\in Y\}$$
is a subspace of $V$.

Suppose that $\dim V=n$ is finite. 
For every subspace of $V$ or $V^{*}$ the dimension of the annihilator is equal to the codimension of this subspace
and the annihilator of the annihilator coincides with the subspace.
The annihilator mapping $X\to X^{0}$ is a bijection of ${\mathcal L}(V)$ to ${\mathcal L}(V^{*})$ reversing the inclusion relation 
and sending every ${\mathcal G}_{k}(V)$ to ${\mathcal G}_{n-k}(V^{*})$.

\begin{prop}\label{prop-ann}
If $\dim V=n$ is finite, then the annihilator mapping
induces an isomorphism between the Grassmann graphs $\Gamma_{k}(V)$ and $\Gamma_{n-k}(V^{*})$.
\end{prop}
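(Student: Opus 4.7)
The plan is to combine the two standard annihilator identities
$$(X + Y)^{0} = X^{0} \cap Y^{0}, \qquad (X \cap Y)^{0} = X^{0} + Y^{0},$$
with the dimension formula $\dim W + \dim W^{0} = n$ (valid because $V$ is finite-dimensional), and then translate the adjacency condition in $\Gamma_{k}(V)$ into the adjacency condition in $\Gamma_{n-k}(V^{*})$ by a direct dimension count.

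The preceding discussion has already established that $X \mapsto X^{0}$ is a bijection $\mathcal{L}(V)\to \mathcal{L}(V^{*})$ which reverses the inclusion relation and sends $\mathcal{G}_{k}(V)$ onto $\mathcal{G}_{n-k}(V^{*})$. So no work is needed to see that the annihilator map is a bijection of vertex sets, and it remains only to check that $X, Y \in \mathcal{G}_{k}(V)$ are adjacent if and only if $X^{0}, Y^{0}$ are adjacent in $\Gamma_{n-k}(V^{*})$.

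For the forward implication, I would assume $\dim(X \cap Y) = k-1$ and compute, using $(X \cap Y)^{0} = X^{0} + Y^{0}$ and the dimension formula,
$$\dim(X^{0} + Y^{0}) = n - \dim(X \cap Y) = n - k + 1,$$
which is exactly the statement that the two $(n-k)$-dimensional subspaces $X^{0}, Y^{0}$ are adjacent. For the reverse implication, I would observe that in finite dimensions $W^{00} = W$ under the canonical identification $V^{**} \cong V$, so the annihilator $\mathcal{G}_{n-k}(V^{*}) \to \mathcal{G}_{k}(V)$ is the inverse of the original annihilator map; applying the forward computation in $V^{*}$ gives the converse in $V$.

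The proof is essentially bookkeeping with annihilator identities and dimensions, so there is no serious obstacle. The only point requiring a small amount of care is the identity $(X \cap Y)^{0} = X^{0} + Y^{0}$: the inclusion $X^{0} + Y^{0} \subseteq (X \cap Y)^{0}$ is immediate, while the reverse inclusion relies on finite-dimensionality and is most cleanly obtained by combining the easier identity $(X+Y)^{0} = X^{0} \cap Y^{0}$ with the dimension formula. With this identity in hand, the whole argument reduces to two lines of arithmetic.
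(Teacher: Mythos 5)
Your argument is correct and is precisely the routine verification the paper leaves to the reader (its proof is just ``Easy verification''): the bijectivity of $X\mapsto X^{0}$ on Grassmannians is already noted in the text, and the identity $(X\cap Y)^{0}=X^{0}+Y^{0}$ together with $\dim W+\dim W^{0}=n$ turns $\dim(X\cap Y)=k-1$ into $\dim(X^{0}+Y^{0})=n-k+1$, with the converse following via $W^{00}=W$ (or simply from the fact that your dimension computation is an equivalence). Nothing further is needed.
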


\begin{proof}
Easy verification.
\end{proof}

Every semilinear isomorphism of $V$ to $V^{*}$ induces an isomorphism of  $\Gamma_{k}(V)$ to $\Gamma_{k}(V^{*})$. 
The composition of this isomorphism and the annihilator mapping is an isomorphism of $\Gamma_{k}(V)$ to $\Gamma_{n-k}(V)$
and we get an automorphism of $\Gamma_{k}(V)$ if $n=2k$.

\begin{theorem}[W.L. Chow \cite{Chow}]\label{theorem-chow}
Suppose that $k>1$ and, in addition, we require that $k<\dim V-1$ if $V$ is finite-dimensional.
Then every automorphism of the Grassmann graph $\Gamma_{k}(V)$ is induced by a semilinear automorphism of $V$
or a semilinear isomorphism of $V$ to $V^{*}$ and the second possibility is realized only in the case when $\dim V=2k$.
\end{theorem}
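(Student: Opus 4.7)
The plan is to classify the maximal cliques of $\Gamma_{k}(V)$ and show that any graph automorphism must permute (possibly swapping) the two resulting families. For $X\in \mathcal{G}_{k-1}(V)$ I set the \emph{star} $[X\rangle_{k}=\{Y\in\mathcal{G}_{k}(V):X\subset Y\}$, and for $S\in \mathcal{G}_{k+1}(V)$ the \emph{top} $\langle S]_{k}=\{Y\in \mathcal{G}_{k}(V):Y\subset S\}$. A direct dimension check shows both are cliques in $\Gamma_{k}(V)$, and one verifies that every maximal clique is of exactly one of these two types. Two distinct stars meet in at most one vertex; the same holds for two distinct tops; but a star $[X\rangle_{k}$ and a top $\langle S]_{k}$ intersect in a \emph{line} $\{Y\in\mathcal{G}_{k}(V):X\subset Y\subset S\}$, naturally bijective with $\mathcal{G}_{1}(S/X)$, when $X\subset S$, and meet in at most one vertex otherwise. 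The hypotheses $k>1$ and (in the finite-dimensional case) $k<\dim V-1$ guarantee that both families of cliques exist and are non-trivial.

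Let $f$ be an automorphism of $\Gamma_{k}(V)$. Since $f$ permutes maximal cliques while preserving their mutual intersection patterns (size one versus a full line), $f$ either sends stars to stars and tops to tops, or it swaps the two families. The combinatorial type of $[X\rangle_{k}$ is that of the projective space $\Pi_{V/X}$ (of projective dimension $\dim V-k$), whereas $\langle S]_{k}$ has the combinatorial type of $\Pi_{S^{*}}$ (of projective dimension $k$); hence a swap is possible only when these dimensions coincide, namely $\dim V=2k$. In particular, in the infinite-dimensional case or whenever $\dim V\neq 2k$, the two families are separately preserved.

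Assume first that $f$ preserves both families. Then $f$ induces bijections $\alpha:\mathcal{G}_{k-1}(V)\to\mathcal{G}_{k-1}(V)$ and $\beta:\mathcal{G}_{k+1}(V)\to\mathcal{G}_{k+1}(V)$ compatible with inclusion into $\mathcal{G}_{k}(V)$. Using that $f$ sends lines of the Grassmann space (intersections of incident star/top pairs) to lines, I would extract an isomorphism of the projective space $\Pi_{V}$ to itself: pick a $1$-dimensional $P\subset V$ and recover it as the intersection of a descending chain of $(k-1)$-subspaces obtained by iterated adjacency in $\Gamma_{k-1}(V)$ (which is controlled by how $\alpha$ acts on stars inside $\mathcal{G}_{k-1}(V)$), checking that the outcome is independent of the choice. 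Applying Theorem~\ref{theorem-FTPG} to the resulting automorphism of $\Pi_{V}$ yields a semilinear automorphism $L:V\to V$, and one verifies that $L$ induces $f$ on $\mathcal{G}_{k}(V)$. In the swap case, where necessarily $\dim V=2k$, one fixes any semilinear isomorphism $V\to V^{*}$ and composes the resulting automorphism of $\Gamma_{k}(V)$ (from Proposition~\ref{prop-ann}, since $\mathcal{G}_{n-k}(V^{*})=\mathcal{G}_{k}(V^{*})$) with $f$; this composition preserves both families and reduces to the previous case, giving a semilinear isomorphism $V\to V^{*}$ inducing $f$.

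The main obstacle is the reconstruction step: extracting an isomorphism of $\Pi_{V}$ from the sole data of an adjacency-preserving bijection on $\mathcal{G}_{k}(V)$. When $k$ sits far from $1$ and $\dim V-1$, neither stars (parametrised by $\mathcal{G}_{k-1}$) nor tops (parametrised by $\mathcal{G}_{k+1}$) touch lines in $V$ directly, so one must descend through several Grassmannians; this is where the line structure of the Grassmann space—and the fact that $f$ sends such lines to lines—is essential, essentially applying a Grassmannian analogue of the Fundamental Theorem of Projective Geometry. A secondary delicate point is verifying that the swap case really forces $\dim V=2k$ and that the annihilator-based correction is of exactly the right type, so that a single application of the non-swap case produces the desired semilinear $V\to V^{*}$.
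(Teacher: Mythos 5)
Your proposal follows essentially the same route as the paper's sketch proof: classify the maximal cliques of $\Gamma_{k}(V)$ as stars and tops, use their intersection patterns and the induced action on lines to descend through the lower Grassmannians to an automorphism of $\Pi_{V}$, apply Theorem~\ref{theorem-FTPG}, and treat the star--top swap case via duality, which forces $\dim V=2k$. The only difference is cosmetic: you reduce the swap case to the type-preserving case by composing $f$ with a fixed duality-induced automorphism of $\Gamma_{k}(V)$, whereas the paper composes $f$ with the annihilator map to get a type-preserving isomorphism onto $\Gamma_{k}(V^{*})$.
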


If $k=1$ or $V$ is finite-dimensional and $k=\dim V-1$, then
any two distinct elements of ${\mathcal G}_{k}(V)$ are adjacent and every bijective transformation of ${\mathcal G}_{k}(V)$
is an automorphism of the graph $\Gamma_{k}(V)$.

In \cite{Chow} (see also \cite{D2,Pankov-book1,Wan}), this statement was proved only for the case when $V$ is finite-dimensional,
but the same arguments work if $V$ is infinite-dimensional.
Also, classical Chow's theorem follows immediately from the description of isometric embeddings of Grassmann graphs \cite[Chapter 3]{Pankov-book2}.
For these reasons, we only sketch the proof of Theorem \ref{theorem-chow}.
It is based on the description of maximal cliques in the Grassmann graph $\Gamma_{k}(V)$.

Recall that a subset in the vertex set of a graph is called a {\it clique} if any two distinct elements of this subset are adjacent vertices in the graph.

From this moment, we suppose that $k>1$ and, in addition, $k<\dim V-1$ if $V$ is finite-dimensional.
For every subspace $S\subset V$ we denote by $[S\rangle_{k}$ the set of all $k$-dimensional subspaces containing $S$.
If $S$ is $(k-1)$-dimensional, then $[S\rangle_{k}$ is a clique of $\Gamma_{k}(V)$. 
Cliques of such type are called {\it stars}.
For every subspace $U\subset V$ we write $\langle U]_{k}$ for the set of all $k$-dimensional subspaces contained in $U$.
In the case when $U$ is $(k+1)$-dimensional, this is a clique of $\Gamma_{k}(V)$. 
Every such clique is said to be  a {\it top}.

\begin{prop}\label{prop-cliqueGr}
Every maximal clique of $\Gamma_{k}(V)$ is a star or a top.
\end{prop}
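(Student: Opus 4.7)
The plan is to fix a maximal clique $\mathcal{C}$ in $\Gamma_{k}(V)$ and two distinct (hence adjacent) elements $X,Y\in\mathcal{C}$, then set $S=X\cap Y$ and $U=X+Y$, which have dimensions $k-1$ and $k+1$ respectively. I want to show first that every $Z\in\mathcal{C}$ either contains $S$ or is contained in $U$, and second that only one of these two alternatives occurs throughout $\mathcal{C}$. For the local step, $Z$ is adjacent to both $X$ and $Y$, so $Z\cap X$ and $Z\cap Y$ are hyperplanes of $Z$; if they coincide, their common value lies in $X\cap Y=S$ and must equal $S$ by dimension, giving $S\subset Z$; if they differ, two distinct hyperplanes of $Z$ span $Z$, forcing $Z\subset X+Y=U$.

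The main task is to rule out the mixed case, where $\mathcal{C}$ simultaneously contains some $Z_{1}\supset S$ with $Z_{1}\not\subset U$ and some $Z_{2}\subset U$ with $S\not\subset Z_{2}$. I would write $Z_{1}=S+\langle v\rangle$ with $v\notin U$, and verify by dimension counting inside $U$ that $Z_{2}\cap S$ has dimension exactly $k-2$ (since $\dim(Z_{2}+S)\le\dim U=k+1$, and $Z_{2}\cap S$ is a proper subspace of $S$). Because $v\notin U\supset Z_{2}$, no vector of $Z_{1}\cap Z_{2}$ can have a non-zero $v$-component, so $Z_{1}\cap Z_{2}\subset S$ and in fact $Z_{1}\cap Z_{2}=Z_{2}\cap S$, which has dimension $k-2$ rather than the $k-1$ required by adjacency of $Z_{1}$ and $Z_{2}$. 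This contradiction forces $\mathcal{C}$ to lie entirely in $[S\rangle_{k}$ or entirely in $\langle U]_{k}$, and since each of these is itself a clique, maximality of $\mathcal{C}$ upgrades containment to equality, yielding a star or a top.

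The hard part is precisely this mixed-case contradiction; the rest is formal bookkeeping. The dimension count is clean only because $S$ and $U$ are tightly constrained by the two adjacencies, and the argument implicitly uses both $k\ge 2$ (so that $k-2<k-1$ and the dichotomy of "same or different hyperplane in $Z$" is non-trivial) and, when $\dim V$ is finite, $k\le\dim V-2$ (so that $U$ is a proper subspace and the notion of top is meaningful). These are exactly the hypotheses imposed on Theorem \ref{theorem-chow} that are inherited by the present statement.
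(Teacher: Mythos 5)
Your proof is correct and follows essentially the same route as the paper's sketch: fix $X,Y\in\mathcal{C}$, and show that if $\mathcal{C}$ is not contained in the star $[X\cap Y\rangle_{k}$ then it lies in the top $\langle X+Y]_{k}$, with maximality upgrading containment to equality. The hyperplane dichotomy for each $Z\in\mathcal{C}$ and the dimension-count excluding the mixed case are exactly the details the paper leaves to the reader, and they check out.
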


\begin{proof}[Sketch of proof]
It is sufficiently to show that every clique ${\mathcal C}$ of the graph $\Gamma_{k}(V)$
is contained in a star or a top.
The statement is trivial if ${\mathcal C}$ consists of two elements.
Suppose that $|{\mathcal C}|\ge 3$.
If $X,Y\in {\mathcal C}$ and ${\mathcal C}$ is not contained in the star $[X\cap Y\rangle_k$,
then we show that it is a subset of the top $\langle X+Y]_k$.
\end{proof}

To prove Theorem \ref{theorem-chow} we will use the following intersection properties of maximal cliques.
The intersection of two distinct stars of ${\mathcal G}_{k}(V)$ is empty or it contains precisely one element,
the second possibility is realized if and only if the associated $(k-1)$-dimensional subspaces are adjacent.
Similarly, the intersection of two distinct tops of ${\mathcal G}_{k}(V)$ is empty or a one-element set
and the second possibility is realized only in the case when the associated $(k+1)$-dimensional subspaces are adjacent.
The intersection of a star $[S\rangle_{k}$ and a top $\langle U]_{k}$ is non-empty if and only if $S$ is contained in $U$.
Every such intersection is called a {\it line} of ${\mathcal G}_{k}(V)$.
A star $[S\rangle_k$ and a top $\langle U]_k$ together with all lines contained in them can be identified with 
the projective spaces associated to $V/S$ and $U^{*}$, respectively.

\begin{proof}[Proof of Theorem \ref{theorem-chow} (sketch)]
Let $f$ be an automorphism of the Grassmann graph $\Gamma_{k}(V)$. 
Then $f$ and $f^{-1}$ transfer maximal cliques (stars and tops) to maximal cliques.
Since the intersection of two distinct maximal cliques is empty or a one-element set or a line,
lines go to lines in both directions.

Suppose that $f$ and $f^{-1}$ both send  stars to stars.
Then $f$ induces a bijective transformation $f_{k-1}$ of ${\mathcal G}_{k-1}(V)$.
This is an automorphism of $\Gamma_{k-1}(V)$ preserving the types of maximal cliques
and we get an automorphism of the projective space $\Pi_V$ if $k=2$.
In the case when $k\ge 3$, we apply the above arguments to $f_{k-1}$. Step by step, we come to an automorphism of $\Pi_V$.
It is induced by a semilinear automorphism of $V$ (Theorem \ref{theorem-FTPG})
and $f$ is induced by the same semilinear automorphism.

Consider the case when  $f$ transfers a certain star $[S\rangle_{k}$ to a top $\langle U]_{k}$.
Since $f$ and $f^{-1}$ send lines to lines, the restriction of $f$ to this star is an isomorphism 
between the projective spaces $\Pi_{V/S}$ and $\Pi_{U^{*}}$.
Theorem \ref{theorem-FTPG} implies that the vector spaces $V/S$ and $U^{*}$ are of the same dimension
which is possible only in the case when $\dim V$ is finite and equal to $2k$.
Using the intersection properties of maximal cliques, we establish that $f$ transfers every star to a top and 
every top goes to a star.
The composition of $f$ and the annihilator mapping is an isomorphism of $\Gamma_{k}(V)$ to $\Gamma_{k}(V^{*})$
which preserves the types of maximal cliques (the annihilator mapping sends stars to tops and tops to stars).
By the arguments from the previous paragraph, this graph isomorphism is induced by a semilinear isomorphism of $V$ to $V^{*}$.
\end{proof}

\begin{rem}\label{rem-westwick}{\rm
By R. Westwick \cite{West}, every bijective transformation of ${\mathcal G}_{k}(V)$
sending adjacent elements to adjacent elements is an automorphism of the graph $\Gamma_{k}(V)$ if $V$ is finite-dimensional;
in other words, if a bijective transformation of ${\mathcal G}_{k}(V)$ is adjacency preserving in one direction and 
$V$ is finite-dimensional, then this transformation is adjacency preserving in both directions.
Kreuzer's example \cite{Kreuzer} shows that this statement fails for the case when $V$ is infinite-dimensional.
Also, if $V$ is infinite-dimensional, then there is an analogue of Chow's theorem for the Grassmannians ${\mathcal G}_{\infty}(V)$ and ${\mathcal G}^{k}(V)$,
see \cite{Plevnik2}.
}\end{rem}

The diameter of the graph $\Gamma_{k}(V)$, i.e. the maximal path distance between vertices, is equal to $\min\{k, \dim V-k\}$.
We say that two elements of ${\mathcal G}_{k}(V)$ are {\it opposite} if the path distance between them is maximal.
If $\dim V\ge 2k$, then this is equivalent to the fact that the intersection of the subspaces is $0$.
In the case when $\dim V\le 2k$, two elements of ${\mathcal G}_{k}(V)$ are opposite if and only if their sum coincides with $V$.
In the next section, we will use the following.

\begin{theorem}\label{theorem-BH}
If $f$ is a bijective transformation of ${\mathcal G}_{k}(V)$ preserving the relation to be opposite in both directions,
i.e. $X,Y\in {\mathcal G}_{k}(V)$ are opposite if and only if $f(X),f(Y)$ are opposite,
then $f$ is an automorphism of $\Gamma_{k}(V)$.
\end{theorem}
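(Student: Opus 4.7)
I would begin with a dimensional reduction. In the finite-dimensional case $\dim V=n<2k$, the annihilator isomorphism of Proposition~\ref{prop-ann} carries ${\mathcal G}_{k}(V)$ to ${\mathcal G}_{n-k}(V^{*})$; it interchanges ``$X\cap Y=0$'' with ``$X^{0}+Y^{0}=V^{*}$'' and intertwines both the adjacency and the opposite relation. Composing $f$ with this isomorphism transports the problem to the Grassmannian of $(n-k)$-subspaces of $V^{*}$, where $n-k\ge k$. Hence we may assume from the start that the ambient dimension is at least $2k$, so that ``opposite'' means simply $X\cap Y=0$ (in the infinite-dimensional case no reduction is needed).

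The heart of the argument is then a purely combinatorial characterization of the adjacency relation in terms of the opposite relation: one must exhibit, for every pair $X,Y$ at distance $1$ in $\Gamma_{k}(V)$, a finite configuration of opposite/non-opposite conditions that witnesses this and then verify that no such configuration is realized by any pair at distance $\ge 2$. A natural template is based on ``frames'' — families of pairwise opposite $k$-subspaces. Heuristically, an adjacent pair $X,Y$ (with $X\cap Y$ of codimension one inside each) shares essentially a common frame of opposites differing in a single coordinate direction, whereas a pair at distance $j\ge 2$ differs in $j$ coordinate directions, and this mismatch can be detected inside the opposite relation alone. Once such a characterization is established, opposite-preservation of $f$ immediately promotes to adjacency-preservation in both directions, which is the statement of the theorem.

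The chief technical obstacle is the precise formulation and verification of the characterization just described. It must hold uniformly for $V$ of arbitrary (possibly infinite) dimension over any field, so cardinality and enumeration arguments are unavailable and the distinguishing condition between distance $1$ and distance $\ge 2$ must be structural. In practice this requires careful bookkeeping of how the maximal cliques of $\Gamma_{k}(V)$ — the stars $[S\rangle_{k}$ and the tops $\langle U]_{k}$ — interact with the opposite relation: stars and tops, together with their ambient spans and the opposite relation restricted to them, provide the finitary invariants that force adjacency. Carrying out this case analysis cleanly, and ruling out that a distance-$\ge 2$ pair could accidentally satisfy the pattern designed for distance $1$, is where the real work of the proof lies.
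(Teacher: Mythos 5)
Your overall strategy---reduce to the case $\dim V\ge 2k$, then characterize adjacency purely in terms of the opposite relation and transfer it through $f$---is exactly the route the paper takes. But your write-up stops precisely where the proof has to happen: you never state, let alone verify, the characterization, and you say explicitly that its formulation and verification is ``where the real work of the proof lies.'' That is a genuine gap, not a detail. Moreover, the two substitutes you gesture at do not fill it. The ``frame'' heuristic (adjacent pairs share a family of pairwise opposite subspaces differing in one coordinate) is not a condition expressible in the opposite relation alone until you prove it is, and your proposal to use the stars $[S\rangle_{k}$ and tops $\langle U]_{k}$ ``together with their ambient spans'' is circular at this stage: maximal cliques are defined by adjacency, which is the very relation you have not yet shown $f$ to preserve, and the subspaces $S$ and $U$ are not visible to a bijection of ${\mathcal G}_{k}(V)$ known only to respect opposition.

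The missing key lemma (from \cite{BH}, \cite{HP}, and quoted in the paper) is concrete and first-order in the opposite relation: distinct $X,Y\in{\mathcal G}_{k}(V)$ are adjacent if and only if there exists $Z\in{\mathcal G}_{k}(V)\setminus\{X,Y\}$ such that every element of ${\mathcal G}_{k}(V)$ opposite to $Z$ is opposite to $X$ or to $Y$. For the forward direction one takes $Z$ a third member of the pencil between $X\cap Y$ and $X+Y$ and checks, working modulo $X\cap Y$, that any $W$ meeting both $X$ and $Y$ nontrivially must meet $Z$ nontrivially; the harder converse requires, for every non-adjacent pair $X,Y$ and every candidate $Z$, an explicit construction of some $W$ opposite to $Z$ but opposite to neither $X$ nor $Y$, carried out structurally so that it works over any field and in arbitrary (possibly infinite) dimension. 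Since the condition in this lemma is manifestly invariant under any bijection preserving opposition in both directions, the theorem follows at once from it; without it, your proposal is a plan rather than a proof. Your preliminary annihilator reduction for $\dim V=n<2k$ is fine (and the cases $k=1$ and $k=n-1$ are trivial and can be set aside), but it does not touch the main difficulty.
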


\begin{rem}{\rm
Theorem \ref{theorem-BH} is proved in \cite{HP}  (see also \cite{Pankov-book1}) under the assumption that $V$ is finite-dimensional (the main idea is taken from \cite{BH}),
but the same arguments work in the case when $V$ is of an arbitrary (not necessarily finite) dimension.
The statement is trivial if $k=1$ or $\dim V$ is finite and $k=\dim V -1$.
In the general case, it is a simple consequence of the following characterization of the adjacency relation in terms of the relation to be opposite:
distinct $X,Y\in {\mathcal G}_{k}(V)$ are adjacent if and only if there exists $Z\in {\mathcal G}_{k}(V)\setminus\{X,Y\}$ such that 
every element of ${\mathcal G}_{k}(V)$ opposite to $Z$ is opposite to $X$ or $Y$.
There are more general results concerning transformations preserving pairs with bounded or fixed distance \cite{DeS-VanM, Lim}.
}\end{rem}

\subsection{Apartments preserving transformations}
For every basis $B$ of the vector space $V$ the set formed by all $k$-dimensional subspaces 
spanned by subsets of $B$ is called the {\it apartment} of ${\mathcal G}_{k}(V)$ associated to $B$.
Two bases of $V$ define the same apartment of ${\mathcal G}_{k}(V)$ if and only if 
the vectors from one of the basis are scalar multiples of the vectors from the other.
If $V$ is finite-dimensional, then apartments of ${\mathcal G}_{k}(V)$ are 
the intersections of ${\mathcal G}_{k}(V)$ with apartments of the building ${\mathfrak F}(V)$ (Remark \ref{rem-building}),
every such intersection is a finite set.
In the case when $V$ is infinite-dimensional, every apartment of ${\mathcal G}_{k}(V)$ contains infinitely many elements. 

It is not difficult to prove that for any two subspaces of $V$ there is a basis of $V$ such that each of these subspaces is spanned by a subset of this basis.
As a direct consequence, we get the following remarkable property of apartments:
for any two $k$-dimensional subspaces of $V$ there is an apartment of ${\mathcal G}_{k}(V)$ containing them.

If $\dim V=n$ is finite and  $B=\{e_{i}\}^{n}_{i=1}$ is a basis of $V$,
then the annihilator mapping transfers the associated apartment of ${\mathcal G}_{k}(V)$
to the apartment of ${\mathcal G}_{n-k}(V^{*})$ corresponding to the dual basis $B^{*}$.
The basis $B^{*}$ consists of $e^{*}_{1},\dots,e^{*}_{n}\in V^{*}$ defined by the condition $e^{*}_{i}(e_{j})=\delta_{ij}$,
where $\delta_{ij}$ is the Kronecker delta.

\begin{rem}{\rm
Apartments of Grassmannians have a useful interpretation in terms of exterior products. 
Let us consider the exterior $k$-product $\wedge^{k}V$ 
(which is defined for an arbitrary, not necessarily finite-dimensional, vector space over a field).
This is the vector space (over the same field) whose elements are linear combinations of so-called $k$-{\it vectors} $x_{1}\wedge\dots\wedge x_{k}$, 
where $x_{1},\dots,x_{k}$ are linearly independent vectors from $V$ (see \cite{Sternberg} for the precise definition).
If $\{e_{i}\}_{i\in I}$ is a basis of the vector space $V$, 
then all $k$-vectors of type $e_{i_{1}}\wedge\dots\wedge e_{i_{k}}$, where $i_{1},\dots,i_{k}$ are mutually distinct elements of $I$, 
form a basis of the vector space $\wedge^{k}V$. 
Every such basis of $\wedge^{k}V$ is said to be {\it regular}.
If $\dim V=n$ is finite, then 
$$\dim (\wedge^{k}V)=\binom{n}{k}.$$
If vectors $x_{1},\dots,x_{k}$ and $y_{1},\dots,y_{k}$ span the same $k$-dimensional subspace of $V$,
then 
$$y_{1}\wedge\dots\wedge y_{k}=\det(M)\,x_{1}\wedge\dots\wedge x_{k},$$
where $M$ is the matrix of decomposition of $y_{1},\dots,y_{k}$ in the basis $x_{1},\dots,x_{k}$.
Therefore, the $k$-dimensional subspace of $V$ spanned by vectors $x_{1},\dots,x_{k}$ can be naturally identified with 
the $1$-dimensional subspace of $\wedge^{k}V$ containing the $k$-vector $x_{1}\wedge\dots\wedge x_{k}$.
We get an injective mapping  of ${\mathcal G}_{k}(V)$ to ${\mathcal G}_{1}(\wedge^{k}V)$  whose image consists of all 
$1$-dimensional subspaces of $\wedge^{k}V$ containing $k$-vectors.
This mapping is known as the {\it Pl\"{u}cker embedding}.  It transfers every line of ${\mathcal G}_{k}(V)$ to a line of the projective space $\Pi_{\wedge^{k}V}$.
The apartment of ${\mathcal G}_{k}(V)$ defined by a basis $B$ goes to 
the apartment of ${\mathcal G}_{1}(\wedge^{k}V)$ defined by the regular basis of $\wedge^{k}V$ corresponding to $B$.
}\end{rem}

\begin{rem}{\rm
If $\dim V=n$ is finite, then every apartment of ${\mathcal G}_{k}(V)$ consists of $\binom{n}{k}$ elements and 
it is the image of an  isometric embedding of the Johnson graph $J(n,k)$ in the Grassmann graph $\Gamma_{k}(V)$.
Recall that $J(n,k)$ is the graph whose vertices are $k$-element subsets in a certain $n$-element set  and 
two such subsets are adjacent vertices in the graph if their intersection is a $(k-1)$-element subset.
Note that there are isometric embeddings of $J(n,k)$ in $\Gamma_{k}(V)$ whose images are not apartments \cite[Chapter 4]{Pankov-book2}.
}\end{rem}

The bijective transformations of ${\mathcal G}_{k}(V)$ induced by semilinear automorphisms of $V$ send apartments to apartments. 
If $\dim V=2k$, then the same holds for the transformations of ${\mathcal G}_{k}(V)$ 
defined by semilinar isomorphisms of $V$ to $V^{*}$.

\begin{theorem}[M. Pankov  \cite{Pankov-book1}]\label{theorem-pank2}
If $\dim V\ge 3$ and $f$ is a bijective transformation of ${\mathcal G}_{k}(V)$ such that $f$ and $f^{-1}$ send apartments to apartments,
then $f$ is induced by a semilinear automorphism of $V$ or a semilinear isomorphism of $V$ to $V^{*}$ and the second possibility is realized only 
in the case when $\dim V=2k$.
\end{theorem}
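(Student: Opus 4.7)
The plan is to reduce to Chow's theorem (Theorem~\ref{theorem-chow}) by showing that an apartments-preserving bijection $f$ is necessarily an automorphism of the Grassmann graph $\Gamma_{k}(V)$. Once this is done, Theorem~\ref{theorem-chow} yields exactly the two families of maps asserted in the conclusion, and both manifestly send apartments to apartments: a semilinear automorphism of $V$ transfers a basis to a basis, while a semilinear isomorphism $V\to V^{*}$ composed with the annihilator mapping sends the apartment of a basis $B$ to the apartment of its dual basis $B^{*}$, and this is a self-bijection of ${\mathcal G}_{k}(V)$ only when $\dim V=2k$.

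The first step is the elementary observation that any pair $X,Y\in{\mathcal G}_{k}(V)$ lies in a common apartment: extend a basis of $X\cap Y$ first to bases of $X$ and $Y$, and then to a basis of $V$. Moreover, for $X,Y$ spanned by index subsets $J_{X},J_{Y}$ of a basis $B$, one has $\dim(X\cap Y)=|J_{X}\cap J_{Y}|$; hence the Grassmann distance $d(X,Y)=k-\dim(X\cap Y)$ coincides with the Johnson distance inside any common apartment, and adjacency in $\Gamma_{k}(V)$ is equivalent to $|J_{X}\cap J_{Y}|=k-1$ inside any such apartment.

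The central technical step is to extract Johnson-adjacency inside an apartment from apartment-preservation alone. My plan is to characterize adjacent pairs $X,Y$ in terms of intersection patterns of the family of apartments passing through $X$ and $Y$. For an adjacent pair, this family has a rigid structure: in any common apartment ${\mathcal A}(B)$ the two subspaces differ by swapping a single basis vector, and one can produce many other apartments ${\mathcal A}(B')$ whose intersection with ${\mathcal A}(B)$ is precisely the trace on ${\mathcal A}(B)$ of the star $[X\cap Y\rangle_{k}$ or of the top $\langle X+Y]_{k}$. For non-adjacent pairs no apartment through $X,Y$ produces such a large ``monolithic'' intersection with ${\mathcal A}(B)$, because at least two basis swaps are required to move between the index sets of $X$ and $Y$. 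Since $f$ preserves apartments in both directions and is bijective, it preserves these intersection patterns, hence preserves Johnson-adjacency in every apartment; together with the first step this yields adjacency preservation in $\Gamma_{k}(V)$.

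With $f$ established as an automorphism of $\Gamma_{k}(V)$, Theorem~\ref{theorem-chow} completes the proof. The main obstacle is the combinatorial characterization used in the third paragraph: one must verify rigorously that the apartment-intersection signature truly separates adjacent from non-adjacent pairs, and this demands separate attention in the finite-dimensional regime (where each apartment is a finite Johnson graph) and the infinite-dimensional one (where apartments are infinite but still indexed by a basis). The self-dual case $\dim V=2k$ requires particular care, since it is exactly there that the $V\to V^{*}$ possibility must be allowed to survive, reflecting the interchange of stars and tops under the annihilator mapping.
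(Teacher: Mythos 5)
Your overall skeleton (show $f$ is an automorphism of the Grassmann graph $\Gamma_{k}(V)$, then quote Chow's theorem) is the same as the paper's, but the central step of your plan is a genuine gap, not just an unfinished verification. You propose to recognize adjacency of $X,Y$ inside a common apartment ${\mathcal A}$ by the existence of a ``large monolithic'' intersection ${\mathcal A}\cap{\mathcal A}'$ through $X,Y$ equal to the trace of the star $[X\cap Y\rangle_{k}$ or of the top $\langle X+Y]_{k}$. Two problems arise. First, ``being a star trace or top trace'' is a geometric property, not one that $f$ visibly preserves: $f$ only carries intersections of apartments to intersections of apartments, so you need a purely combinatorial invariant of such intersections, and the obvious one (cardinality, or ``being as large as possible'') fails. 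Concretely, for $k=2$, $\dim V=4$, the star and top traces through the adjacent pair $\langle e_1,e_2\rangle,\langle e_1,e_3\rangle$ have $3$ elements, but the bases $\{e_1,e_2,e_3,e_4\}$ and $\{e_1,e_2+e_1,e_3,e_4+e_3\}$ give two apartments through the \emph{opposite} pair $\langle e_1,e_2\rangle,\langle e_3,e_4\rangle$ whose intersection $\{\langle e_1,e_2\rangle,\langle e_1,e_3\rangle,\langle e_3,e_4\rangle\}$ also has $3$ elements; moreover the genuinely maximal intersections of two apartments are not star or top traces at all but the sets ${\mathcal A}(+i,+j)\cup{\mathcal A}(-i)$ obtained by replacing $e_i$ with $e_i+e_j$. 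Second, even granting the reduction, Chow's theorem requires $1<k<\dim V-1$, so your plan says nothing about $k=1$ (and $k=\dim V-1$ in the finite case), where every bijection is a graph automorphism and a separate argument is needed.

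The paper repairs exactly these points: it classifies the maximal \emph{inexact} subsets of an apartment (they are the sets ${\mathcal A}(+i,+j)\cup{\mathcal A}(-i)$), passes to their complements ${\mathcal A}(+i,-j)$, and proves that, when $2k\le\dim V$ (the other case being handled by the annihilator mapping), two elements of ${\mathcal A}$ are \emph{opposite} precisely when no complementary subset contains both. Since $f$ and $f^{-1}$ map apartments onto apartments, they preserve maximal inexact subsets, hence complementary subsets, hence the opposite relation in both directions; Theorem~\ref{theorem-BH} then upgrades this to an automorphism of $\Gamma_{k}(V)$, and Chow's theorem finishes for $k>1$, while $k=1$ is treated directly via non-collinearity and the Fundamental Theorem of Projective Geometry. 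If you want to salvage your adjacency-based route, the invariant to count is the family of complementary subsets containing the pair (as in the remark following the paper's proof), not star or top traces; in the finite-dimensional case adjacency corresponds to the maximal count $(k-1)(\dim V-k-1)$, and in the infinite-dimensional case to a containment-maximality property of that family.
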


In the case when $k=1$, this statement easily follows from Theorem \ref{theorem-FTPG}.
We observe that three distinct elements of ${\mathcal G}_{1}(V)$ belong to the same apartment if and only if 
they are non-collinear points of the projective space $\Pi_V$, i.e. there is no line of $\Pi_V$ containing them.
Therefore, if $f$ is a bijective transformation of ${\mathcal G}_{1}(V)$ satisfying the condition of Theorem \ref{theorem-pank2},
then $f$ and $f^{-1}$ send triples of non-collinear points to triples of non-collinear points.
This means that triples of collinear points go to triples of collinear points in both directions.
Then $f$ and $f^{-1}$ transfer lines to lines, i.e. $f$ is an automorphism of $\Pi_V$.

If $\dim V=n$ is finite, then it is sufficient to prove Theorem \ref{theorem-pank2} only for the case when $k\le n-k$.
Indeed, if $f$ is an apartments preserving  bijective transformation of ${\mathcal G}_{k}(V)$, 
then  $X\to f(X^{0})^{0}$ is a bijective transformation of ${\mathcal G}_{n-k}(V^{*})$ satisfying the same condition. 
The latter transformation is an automorphism of $\Gamma_{n-k}(V^{*})$ if and only if $f$ is an automorphism of $\Gamma_{k}(V)$.

\begin{rem}{\rm
We refer \cite{Pankov5} for the description of apartments preserving transformations of the Grassmannians of infinite-dimensional vector spaces
formed by subspaces of infinite dimensions and codimensions. 
}\end{rem}

\subsection{Proof of Theorem \ref{theorem-pank2}}
Let $B=\{e_i\}_{i\in I}$ be a basis of $V$. Denote by ${\mathcal A}$ the associated apartment of ${\mathcal G}_{k}(V)$.
We suppose that $k>1$.  In the case when $\dim V=n$ is finite, we also assume that $k\le n-k$.

For every $i\in I$ we denote by ${\mathcal A}(+i)$ and ${\mathcal A}(-i)$
the sets consisting of all elements of ${\mathcal A}$ which contain $e_{i}$ and 
do not contain $e_{i}$, respectively.
For any distinct $i,j\in I$ we define
$${\mathcal A}(+i,+j)={\mathcal A}(+i)\cap {\mathcal A}(+j),$$
$${\mathcal A}(+i,-j)={\mathcal A}(+i)\cap {\mathcal A}(-j).$$
A subset ${\mathcal X}\subset{\mathcal A}$ is said to be {\it inexact}
if there is an apartment of ${\mathcal G}_{k}(V)$ distinct from ${\mathcal A}$ and containing ${\mathcal X}$.

\begin{exmp}\label{exmp-inexact}{\rm
We claim that for any distinct $i,j\in I$ the subset 
\begin{equation}\label{eq4-1}
{\mathcal A}(+i,+j)\cup {\mathcal A}(-i)
\end{equation}
is inexact.
In the basis $B$, we replace $e_{i}$ by the vector $e_{i}+e_{j}$.
If ${\mathcal A}'$ is the apartment of ${\mathcal G}_{k}(V)$ corresponding to this new basis, 
then
$${\mathcal A}\cap {\mathcal A}'={\mathcal A}(+i,+j)\cup {\mathcal A}(-i).$$
}\end{exmp}

\begin{lemma}\label{lemma4-1}
Every maximal inexact subset of ${\mathcal A}$ is of type \eqref{eq4-1}.
\end{lemma}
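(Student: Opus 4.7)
My plan is to identify the apartment $\mathcal{A}$ with the unordered direct sum decomposition $V=\bigoplus_{i\in I}P_i$ into one-dimensional subspaces $P_i=\langle e_i\rangle$, so that apartments of $\mathcal{G}_k(V)$ correspond bijectively to such decompositions and the elements of $\mathcal{A}$ are parametrized by $k$-subsets $S(X)\subset I$ via $X=\bigoplus_{i\in S(X)}P_i$. A subset $\mathcal{X}\subset\mathcal{A}$ is inexact precisely when $\mathcal{X}\subset\mathcal{A}\cap\mathcal{A}'$ for some apartment $\mathcal{A}'\neq\mathcal{A}$ coming from another decomposition $\{P'_{i'}\}_{i'\in I'}$. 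Since Example \ref{exmp-inexact} already shows that every set of the form \eqref{eq4-1} is inexact, it suffices to prove that every such intersection $\mathcal{A}\cap\mathcal{A}'$ is contained in some $\mathcal{A}(+i,+j)\cup\mathcal{A}(-i)$.

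Let $J\subset I$ be the set of indices $i$ for which $P_i$ equals some $P'_{i'}$ and set $m=|I\setminus J|\geq 1$; the ``changed parts'' $W=\bigoplus_{i\in I\setminus J}P_i=\bigoplus_{i'\in I'\setminus J'}P'_{i'}$ coincide, since both are the complement in $V$ of the common shared subspace. I will split into two cases. When $m\geq 2$, I pick distinct $a,b\in I\setminus J$ and claim $\mathcal{A}\cap\mathcal{A}'\subset\mathcal{A}(-a)\cup\mathcal{A}(+a,+b)$: if some $X\in\mathcal{A}\cap\mathcal{A}'$ had $a\in S(X)$ and $b\notin S(X)$, then $X\cap W=P_a$ would be one-dimensional, forcing $P_a$ to coincide with a single $P'_{i'}$ with $i'\in I'\setminus J'$ and contradicting $a\notin J$. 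When $m=1$, write $I\setminus J=\{a\}$ and $I'\setminus J'=\{a'\}$, expand a vector $v$ spanning $P'_{a'}$ in the basis $\{e_i\}$ as $v=\sum c_ie_i$, and set $L=\{i:c_i\neq 0\}$; then $a\in L$ and $|L|\geq 2$. A short linear-algebra check shows that for $X\in\mathcal{A}$ with $a\in S(X)$ one has $X\in\mathcal{A}'$ iff $L\subset S(X)$; so for any $b\in L\setminus\{a\}$ one again gets $\mathcal{A}\cap\mathcal{A}'\subset\mathcal{A}(-a)\cup\mathcal{A}(+a,+b)$, with equality iff $|L|=2$.

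The main obstacle will be the bookkeeping in the $m=1$ case: I must argue that when $X\in\mathcal{A}$ with $a\in S(X)$ is rewritten as $X=\bigoplus_{i'\in T}P'_{i'}$, the index $a'$ necessarily lies in $T$ (else $X\subset\bigoplus_{i\in J}P_i$, precluding $a\in S(X)$), and the remaining $k-1$ primed indices are forced by the shared part of $X$, so that the condition $P'_{a'}\subset X$ reduces exactly to $v\in X$, i.e.\ $L\subset S(X)$. Once both cases are handled, every intersection $\mathcal{A}\cap\mathcal{A}'$, and hence every inexact subset of $\mathcal{A}$, is contained in some set of form \eqref{eq4-1}; combined with Example \ref{exmp-inexact} this identifies the maximal inexact subsets of $\mathcal{A}$ with exactly the sets \eqref{eq4-1}.
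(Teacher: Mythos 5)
Your reduction to intersections $\mathcal{A}\cap\mathcal{A}'$ is fine, and your case $m=1$ is essentially correct, but the case $m\ge 2$ contains a genuine gap. First, the preliminary claim that the ``changed parts'' coincide, $W=\bigoplus_{i\in I\setminus J}P_i=\bigoplus_{i'\in I'\setminus J'}P'_{i'}$, is false: both are complements of the common subspace $\bigoplus_{i\in J}P_i$, but complements are not unique (for the bases $\{e_1,e_2,e_3\}$ and $\{e_1,e_2,e_1+e_3\}$ one gets $W=\langle e_3\rangle\ne\langle e_1+e_3\rangle=W'$); in your case $m=1$ it would even force $P_a=P'_{a'}$, contradicting $a\notin J$. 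More seriously, the containment $\mathcal{A}\cap\mathcal{A}'\subset\mathcal{A}(-a)\cup\mathcal{A}(+a,+b)$ for an \emph{arbitrary} choice of distinct $a,b\in I\setminus J$ is simply not true, and your argument for it breaks at the step ``then $X\cap W=P_a$ would be one-dimensional'': when $a\in S(X)$ and $b\notin S(X)$, the set $S(X)$ may contain further indices of $I\setminus J$, so $X\cap W$ need not be one-dimensional. Concretely, let $\dim V=4$, $k=2$, take $\mathcal{A}$ from $\{e_1,e_2,e_3,e_4\}$ and $\mathcal{A}'$ from $\{e_1+e_2,\,e_1-e_2,\,e_3+e_4,\,e_3-e_4\}$; then $J=\emptyset$, $\mathcal{A}\cap\mathcal{A}'=\{\langle e_1,e_2\rangle,\langle e_3,e_4\rangle\}$, and for $a=1$, $b=3$ the element $X=\langle e_1,e_2\rangle$ lies in neither $\mathcal{A}(-1)$ nor $\mathcal{A}(+1,+3)$. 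The lemma is not contradicted, since this intersection is contained in $\mathcal{A}(+1,+2)\cup\mathcal{A}(-1)$, but the second index cannot be an arbitrary element of $I\setminus J$: it must be adapted to the set under consideration.

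The missing idea is exactly this adapted choice, which is how the paper proceeds: for the inexact set $\mathcal{X}$ (there is no need to pass to the full intersection) and each $i\in I$, let $S_i$ be the intersection of all elements of $\mathcal{X}$ containing $e_i$, with $S_i=0$ if there are none. If every $S_i$ were the line $\langle e_i\rangle$, the apartment containing $\mathcal{X}$ would be unique, so inexactness forces some $S_i$ to be zero (then $\mathcal{X}\subset\mathcal{A}(-i)$) or of dimension at least $2$; in the latter case any $j\ne i$ with $e_j\in S_i$ gives $\mathcal{X}\subset\mathcal{A}(+i,+j)\cup\mathcal{A}(-i)$, because every member of $\mathcal{X}$ through $e_i$ contains $S_i$ and hence $e_j$. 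Replacing your ``pick any distinct $a,b\in I\setminus J$'' by this choice (the second index taken inside $S_i$) closes the gap; your $m=1$ computation is then a special instance of it, with $L\setminus\{a\}$ playing the role of the admissible second indices.
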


\begin{proof}
We need to show that every inexact subset ${\mathcal X}\subset {\mathcal A}$
is contained in a subset of type \eqref{eq4-1}.
For every $i\in I$ we denote by $S_{i}$ the intersection of all elements from ${\mathcal X}$ containing $e_i$
and we write $S_{i}=0$ if ${\mathcal X}$ does not contain such elements.
We claim that the dimension of at least one of $S_{i}$ is not equal to $1$
(indeed, if each $S_{i}$ is the $1$-dimensional subspace containing $e_{i}$, then ${\mathcal X}$ is not inexact).
If $S_{i}=0$, then
$${\mathcal X}\subset {\mathcal A}(-i)\subset {\mathcal A}(+i,+j)\cup {\mathcal A}(-i)$$
for any $j\in I\setminus\{i\}$.
In the case when $\dim S_i \ge 2$, we take any $e_{j}\in S_{i}$ such that $j\ne i$ and establish that 
${\mathcal X}$ is contained in ${\mathcal A}(+i,+j)\cup {\mathcal A}(-i)$.
\end{proof}

We say that ${\mathcal C}\subset {\mathcal A}$ is a {\it complementary} subset if 
${\mathcal A}\setminus {\mathcal C}$ is a maximal inexact subset.
An easy verification shows that the complementary subset corresponding to \eqref{eq4-1} is ${\mathcal A}(+i,-j)$.
Our proof is based on the following simple characterization of the relation to be opposite in terms of complementary subsets.

\begin{lemma}\label{lemma4-2}
Two elements of ${\mathcal A}$ are opposite if and only if there is no complementary subset of ${\mathcal A}$ containing both these elements.
\end{lemma}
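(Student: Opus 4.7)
The plan is to translate everything to combinatorics on the index set $I$. Write every element of $\mathcal{A}$ as $X_J := \mathrm{span}\{e_i : i\in J\}$ for a unique $k$-element subset $J\subset I$. Under the standing hypothesis ($k>1$, and $k\le n-k$ when $\dim V=n$ is finite), opposite means trivial intersection, so $X_J$ and $X_K$ are opposite if and only if $J\cap K=\emptyset$. Also, by the identification of maximal inexact subsets in Lemma \ref{lemma4-1} and the paragraph after it, the complementary subsets are exactly the sets
\[
\mathcal{A}(+i,-j)=\{X_J\in\mathcal{A}: i\in J,\ j\notin J\}
\]
for distinct $i,j\in I$. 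Hence $X_J$ and $X_K$ both lie in some complementary subset if and only if there exist distinct $i,j\in I$ with $i\in J\cap K$ and $j\in I\setminus(J\cup K)$.

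For the forward direction I would argue contrapositively: suppose $X_J$ and $X_K$ both belong to some $\mathcal{A}(+i,-j)$. Then $i\in J\cap K$, so $J\cap K\neq\emptyset$, which means $X_J$ and $X_K$ are not opposite.

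For the converse I would show that whenever $J\cap K\neq\emptyset$, a suitable pair $(i,j)$ exists. Pick any $i\in J\cap K$; it remains to find $j\in I\setminus (J\cup K)$ distinct from $i$. Since $|J|=|K|=k$ and $J\cap K\neq\emptyset$, we have $|J\cup K|\le 2k-1$. If $I$ is infinite this immediately gives $|I\setminus(J\cup K)|=|I|$, so such a $j$ exists. If $I$ is finite, our hypothesis is $|I|=\dim V\ge 2k$, hence $|I\setminus(J\cup K)|\ge 2k-(2k-1)=1$, so again such a $j$ exists. In either case, the pair $(i,j)$ with $i\neq j$ produces a complementary subset $\mathcal{A}(+i,-j)$ containing both $X_J$ and $X_K$.

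The only delicate point is the boundary case $\dim V=2k$, where $|I|=2k$ and the two $k$-subsets $J,K$ nearly exhaust $I$. The key observation that saves the argument here is that the hypothesis $J\cap K\neq\emptyset$ strictly reduces $|J\cup K|$ below $2k$, precisely leaving room for the index $j$. This is the one inequality in the argument that has to be checked carefully; everything else is a direct translation between the geometric statement and its combinatorial shadow on $I$.
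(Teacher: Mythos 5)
Your proposal is correct and follows essentially the same route as the paper: one direction is the observation that membership of both elements in some $\mathcal{A}(+i,-j)$ forces a common basis vector $e_i$ in the intersection, and the converse picks $i$ in the (nonempty) intersection and uses the count $|J\cup K|\le 2k-1<2k\le\dim V$ — the combinatorial form of the paper's inequality $\dim(X+Y)=2k-\dim(X\cap Y)<2k\le\dim V$ — to find $e_j$ outside $X+Y$. The translation to index sets is only notational; the argument, including the delicate boundary case $\dim V=2k$, matches the paper's proof.
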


\begin{proof}
Let $X,Y\in {\mathcal A}$.
The complementary subset ${\mathcal A}(+i,-j)$ contains both $X,Y$ if and only if 
$$e_{i}\in X\cap Y,\;\;e_{j}\not\in X+Y.$$
By our assumption, $2k\le \dim V$, i.e.
two elements of ${\mathcal G}_{k}(V)$ are opposite if and only if their intersection is zero.
If $X\cap Y=0$, then there is no complementary subset containing both $X,Y$. 
Suppose that $X$ and $Y$ have a non-zero intersection and take any $e_{i}\in X\cap Y$.
Since 
$$\dim(X+Y)=2k-\dim(X\cap Y)<2k\le\dim V,$$
there is $e_{j}\not\in X+Y$.
Then the complementary subset ${\mathcal A}(+i,-j)$ contains both $X$ and $Y$.
\end{proof}

Let $f$ be a bijective transformation of ${\mathcal G}_{k}(V)$
such that $f$ and $f^{-1}$ send apartments to apartments.
For any $X,Y\in {\mathcal G}_{k}(V)$ we take an apartment ${\mathcal A}\subset {\mathcal G}_{k}(V)$ containing them.
It is clear that $f$ transfers inexact subsets of ${\mathcal A}$ to inexact subsets of the apartment $f({\mathcal A})$.
Similarly, $f^{-1}$ sends inexact subsets of $f({\mathcal A})$ to inexact subsets of ${\mathcal A}$. 
Therefore, ${\mathcal X}$ is a maximal inexact subset of ${\mathcal A}$
if and only if $f({\mathcal X})$ is a maximal inexact subset of $f({\mathcal A})$.
This means that a subset ${\mathcal C}\subset {\mathcal A}$ is complementary if and only if 
$f({\mathcal C})$ is a complementary subset of $f({\mathcal A})$.
Then Lemma \ref{lemma4-2} guarantees that  $f(X)$ and $f(Y)$ are opposite if and only if the same holds for $X$ and $Y$.
So, $f$ preserves the opposite relation in both directions
and, by Theorem \ref{theorem-BH}, it is an automorphism of the Grassmann graph $\Gamma_{k}(V)$.
Theorem \ref{theorem-chow} gives the claim.

\begin{rem}{\rm
The proof of Theorem \ref{theorem-pank2}  given in \cite{Pankov-book1} 
is based on the following characterization of adjacency in terms of complementary subsets.
Let ${\mathcal A}$ be an apartment of ${\mathcal G}_{k}(V)$.
For any pair of distinct $X,Y\in {\mathcal A}$ we denote by ${\mathfrak C}(X,Y)$ the collection of all complementary subsets of ${\mathcal A}$ containing both $X,Y$. 
An easy verification shows that the following assertions are fulfilled:
\begin{enumerate}
\item[(1)]
In the case when $V$ is finite-dimensional, 
$X,Y\in {\mathcal A}$ are adjacent if and only if ${\mathfrak C}(X,Y)$ contains the maximal number of complimentary subsets.
This number is equal to $(k-1)(\dim V-k-1)$.
\item[(2)]
Suppose that $V$ is infinite-dimensional. 
For any pair of distinct $X,Y\in {\mathcal A}$ there are adjacent $X',Y'\in {\mathcal A}$ such that ${\mathfrak C}(X,Y)$ is contained in  ${\mathfrak C}(X',Y')$;
moreover, if ${\mathfrak C}(X,Y)={\mathfrak C}(X',Y')$, then the pairs $X,Y$ and $X',Y'$ are coincident, i.e. $X,Y$ are adjacent.
\end{enumerate}
If $f$ is a bijective transformation of ${\mathcal G}_{k}(V)$ such that $f$ and $f^{-1}$ send apartments to apartments,
then the statements (1) and (2) guarantee that $f$ is an automorphism of $\Gamma_{k}(V)$.
}\end{rem}

\begin{rem}\label{rem5-finite}{\rm
Suppose that $\dim V=n$ is finite and not less than $3$.
Let $f$ be a bijective transformation of ${\mathcal G}_{k}(V)$ sending apartments to apartments
(we do not require that $f^{-1}$ satisfies the same condition).
It was noted above that we can restrict ourself to the case when $k\le n-k$.
If $k=1$, then $f$ transfers any triple of non-collinear points of $\Pi_{V}$ to a triple of non-collinear points.
This implies that $f^{-1}$ sends any triple of collinear points to a triple of collinear points.
Then $f^{-1}$ maps lines to subsets of lines and, by Remark \ref{rem2-1}, it is an automorphism of $\Pi_{V}$. 
Consider the case when $k>1$.
Let ${\mathcal A}$ be an apartment of ${\mathcal G}_{k}(V)$.
Then $f$ transfers every inexact subset of ${\mathcal A}$ to an inexact subset of the apartment $f({\mathcal A})$.
Since ${\mathcal A}$ and $f({\mathcal A})$ have the same finite number of  inexact subsets,
${\mathcal X}$ is an inexact subset of ${\mathcal A}$ if and only if $f({\mathcal X})$ is an inexact subset of $f({\mathcal A})$.
As above, we establish that $f$ is an automorphism of $\Gamma_{k}(V)$.
Therefore, the statement of Theorem \ref{theorem-pank2} holds even if we do not require that the inverse transformation $f^{-1}$ sends apartments to apartments.
A more general result can be found in \cite[Chapter 5]{Pankov-book2}.
}\end{rem}

\section{Grassmannians of Hilbert spaces}
We return to Grassmannians of Hilbert spaces.
As above, we suppose that $H$ is a complex Hilbert space.
The following two types of bijective transformations of ${\mathcal G}_{k}(H)$ will be considered:
transformations preserving the orthogonality relation in both directions and compatibility preserving transformations.

\subsection{Orthogonality preserving transformations}

By Theorem \ref{theorem3-3} and Proposition \ref{prop3-1}, every bijective transformation of ${\mathcal G}_{\infty}(H)$ or ${\mathcal G}_{1}(H)$
preserving the orthogonality relation in both directions can be uniquely extended to an automorphism of the logic ${\mathcal L}(H)$.
Using Theorem \ref{theorem-chow}, we prove the following.

\begin{theorem}[M. Gy\"ory \cite{Gyory}, P. {\v S}emrl \cite{Semrl}]\label{theorem-GS}
If $\dim H >2k$, then every bijective transformation of ${\mathcal G}_{k}(H)$ preserving the orthogonality relation in both directions 
can be uniquely extended to an automorphism of the logic ${\mathcal L}(H)$.
\end{theorem}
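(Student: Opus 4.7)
The plan is to show that $f$ is an automorphism of the Grassmann graph $\Gamma_{k}(H)$, apply Chow's theorem (Theorem \ref{theorem-chow}) to obtain a semilinear automorphism of $H$ inducing $f$, and then use the orthogonality preservation together with Lemma \ref{lemma3-0} to upgrade this to a nonzero scalar multiple of a unitary or anti-unitary operator. The case $k=1$ is Proposition \ref{prop3-1}, so from now on assume $k\ge 2$.

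For $X\in \mathcal{G}_{k}(H)$ set
$$\mathcal{O}(X)=\{Y\in \mathcal{G}_{k}(H):Y\perp X\}=\mathcal{G}_{k}(X^{\perp}),$$
so that $f(\mathcal{O}(X))=\mathcal{O}(f(X))$ for every $X$. The key identity is
$$\mathcal{O}(X)\cap \mathcal{O}(Y)=\mathcal{G}_{k}((X+Y)^{\perp}),$$
which is nonempty precisely when $\dim(X+Y)\le \dim H-k$. The main step is the following orthogonality-theoretic characterization of adjacency: distinct $X,Y\in \mathcal{G}_{k}(H)$ are adjacent if and only if
\begin{enumerate}
\item[(i)] $\mathcal{O}(X)\cap \mathcal{O}(Y)\ne \emptyset$, and
\item[(ii)] for every pair of distinct $Z_{1},Z_{2}\in \mathcal{G}_{k}(H)$ satisfying $\mathcal{O}(Z_{1})\cap \mathcal{O}(Z_{2})\supseteq \mathcal{O}(X)\cap \mathcal{O}(Y)$, equality holds.
\end{enumerate}
Under (i), the displayed identity recovers the subspace $X+Y$ from orthogonality data, and (ii) translates into the assertion that any two distinct $k$-dimensional subspaces of $X+Y$ together span $X+Y$, which is equivalent to $\dim(X+Y)=k+1$. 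The hypothesis $\dim H>2k$ ensures that every adjacent pair meets (i), and that for every non-adjacent distinct pair either (i) fails or (i) holds and one can exhibit $Z_{1},Z_{2}$ inside $X+Y$ sharing a $(k-1)$-dimensional subspace with $Z_{1}+Z_{2}\subsetneq X+Y$, violating (ii). Since the whole characterization is phrased in terms of $\perp$, $f$ and $f^{-1}$ both preserve it, so $f$ is an automorphism of $\Gamma_{k}(H)$. The clean verification of this characterization, in particular the nonemptiness argument in (i) and the strict-inclusion argument in (ii), is the principal technical obstacle.

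Because $k\ge 2$ and $k<\dim H-1$, Chow's theorem now applies and gives a semilinear automorphism $L:H\to H$ inducing $f$ on $\mathcal{G}_{k}(H)$; the alternative possibility, a semilinear isomorphism $H\to H^{*}$, requires $\dim H=2k$ and is excluded by hypothesis. For nonzero orthogonal vectors $x,y\in H$, choose $X\in \mathcal{G}_{k}(H)$ with $x\in X\subset y^{\perp}$ (possible since $\dim y^{\perp}\ge k+1$) and then $Y\in \mathcal{G}_{k}(H)$ with $y\in Y\subset X^{\perp}$ (possible since $\dim X^{\perp}=\dim H-k>k$). Since $X\perp Y$, the subspaces $L(X)=f(X)$ and $L(Y)=f(Y)$ are orthogonal, so $L(x)\perp L(y)$. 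Lemma \ref{lemma3-0} then shows that $L$ is a nonzero scalar multiple of a unitary or anti-unitary operator $U$, and the automorphism $g$ of $\mathcal{L}(H)$ induced by $U$ (Theorem \ref{theorem3-1}) restricts to $f$ on $\mathcal{G}_{k}(H)$. Uniqueness of the extension is clear, since every $P\in \mathcal{G}_{1}(H)$ is the intersection of the $k$-dimensional subspaces containing it, so any logic automorphism is determined by its restriction to $\mathcal{G}_{k}(H)$.
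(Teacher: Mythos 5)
Your proposal is correct, and its skeleton is the same as the paper's: handle $k=1$ by Proposition \ref{prop3-1}, show that $f$ is an automorphism of the Grassmann graph $\Gamma_{k}(H)$, apply Theorem \ref{theorem-chow} (the dual possibility being excluded since $\dim H\neq 2k$), and conclude with Lemma \ref{lemma3-0}. The genuine difference is in the middle step. The paper uses the identity $f(\langle U]_{k})=\langle f(U^{\perp})^{\perp}]_{k}$ for $U\in{\mathcal G}^{k}(H)$ to build a bijection $g$ of the set of closed subspaces of finite codimension at least $k$ and dimension greater than $k$, checks that $g$ preserves inclusion and hence codimension, and deduces $\dim\bigl(f(X)+f(Y)\bigr)=\dim(X+Y)$, so $f$ preserves adjacency (indeed the whole path distance). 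You instead characterize adjacency intrinsically: $X,Y$ are adjacent iff ${\mathcal O}(X)\cap{\mathcal O}(Y)={\mathcal G}_{k}\bigl((X+Y)^{\perp}\bigr)$ is nonempty and maximal among the sets ${\mathcal O}(Z_{1})\cap{\mathcal O}(Z_{2})$ containing it. Your verification goes through: under (i), the inclusion of these sets is equivalent to $Z_{1}+Z_{2}\subseteq X+Y$ (every vector of a subspace of dimension at least $k$ lies in some $k$-dimensional subspace of it, and finite-dimensional subspaces are biorthogonally closed), condition (ii) then says that any two distinct $k$-dimensional subspaces of $X+Y$ span it, which forces $\dim(X+Y)=k+1$, and when $\dim(X+Y)\ge k+2$ an adjacent pair inside $X+Y$ produces a strict inclusion; the hypothesis $\dim H>2k$ is exactly what makes (i) hold for adjacent pairs. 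The paper's route buys the stronger conclusion that all codimensions and distances are preserved, together with the induced map $g$ on cofinite subspaces; your route is shorter and purely relation-theoretic, in the spirit of Theorem \ref{theorem-BH} and Lemma \ref{lemma4-2}, and you also spell out two points the paper leaves implicit, namely that the semilinear automorphism given by Chow's theorem sends orthogonal vectors to orthogonal vectors, and the uniqueness of the extension (for the latter, add the remark that a logic automorphism is determined by its restriction to ${\mathcal G}_{1}(H)$, since every closed subspace is the join of its $1$-dimensional subspaces).
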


If $\dim H<2k$, then  there exist no orthogonal pairs of $k$-dimensional subspaces.
Consider the case when $\dim H=2k$.
For every $X\in {\mathcal G}_{k}(H)$ the orthogonal complement $X^{\perp}$ is the unique $k$-dimensional subspace orthogonal to $X$.
It was noted in Subsection 3.2 that any bijective transformation of the set of all such pairs $\{X,X^{\perp}\}$ defines a class
of bijective transformations of ${\mathcal G}_{k}(H)$ preserving the orthogonality relation in both directions.
If $\dim H=2$, then every such transformation of ${\mathcal G}_{1}(H)$ can be uniquely extended to a logic automorphism
and there are logic automorphisms which are not induced by unitary and anti-unitary operators.
If $\dim H=2k\ge 4$, then every logic automorphism  is induced by an unitary or anti-unitary operator
and there are bijective transformations of ${\mathcal G}_{k}(H)$ which preserve the orthogonality relation in both directions
and cannot be extended to logic automorphisms.

\begin{proof}[Proof of Theorem \ref{theorem-GS}]
Let $f$ be a bijective transformation of ${\mathcal G}_{k}(H)$ preserving the orthogonality relation in both directions
and $\dim H>2k$. For $k=1$ the statement was proved above (Proposition \ref{prop3-1}) and we suppose that $k>1$.

For every subspace $U\subset H$ we denote by $\langle U]_{k}$ the set of all $k$-dimensional subspaces contained in $U$.
If $U\in {\mathcal G}^{k}(H)$, then $\langle U]_{k}$ consists of all $k$-dimensional subspaces orthogonal to $U^{\perp}\in {\mathcal G}_{k}(H)$.
Hence $f(\langle U]_k)$ is formed by all $k$-dimensional subspaces orthogonal to $f(U^{\perp})$
which means that
$$f(\langle U]_k)=\langle g(U)]_{k},\;\mbox{ where }\;g(U)=f(U^{\perp})^{\perp}$$
and $g$ is a bijective transformation of ${\mathcal G}^{k}(H)$.
The condition $\dim H>2k$ guarantees that ${\mathcal G}^{k}(H)\ne {\mathcal G}_{k}(H)$.

Let ${\mathcal G}'$ be the set of all closed subspaces of $H$ whose codimension is a finite number not less than $k$ and whose dimension is greater than $k$.
If $H$ is infinite-dimensional, then ${\mathcal G}'$ is formed by all closed subspaces of finite codimension $\ge k$.
In the case when $\dim H=n$ is finite, it consists of all subspaces $X\subset H$ satisfying $$k<\dim X\le n-k$$
(this set is non-empty, since $n>2k$).
The set $\langle U]_k$ is non-empty if $U$ belongs to ${\mathcal G}'$ and
we claim that there exists $g(U)\in {\mathcal G}'$ such that 
\begin{equation}\label{eq5-1}
f(\langle U]_k)=\langle g(U)]_{k}.
\end{equation}
Indeed, $U$ can be presented as the intersection of some $U_{1},\dots,U_{i}\in {\mathcal G}^{k}(H)$
and 
$$g(U)=g(U_1)\cap\dots\cap g(U_i)$$
is as required.
We get a bijective transformation $g$ of ${\mathcal G}'$ satisfying \eqref{eq5-1} for every $U\in {\mathcal G}'$.
It is easy to see that $g$ preserves the inclusion relation in both directions.
Hence, it preserves the codimensions of subspaces. 

Let $X$ and $Y$ be elements of ${\mathcal G}_{k}(H)$ such that $(X+Y)^{\perp}$ belongs to ${\mathcal G}'$
(since $\dim H>2k$, any two adjacent elements of ${\mathcal G}_{k}(H)$ satisfy this condition). 
Then $\langle (X+Y)^{\perp}]_{k}$ consists of all $k$-dimensional subspaces orthogonal to both $X,Y$ and
$$f(\langle (X+Y)^{\perp}]_{k})=\langle g((X+Y)^{\perp})]_k$$
is formed by all $k$-dimensional subspaces orthogonal to both $f(X),f(Y)$.
This implies that
$$g((X+Y)^{\perp})=(f(X)+f(Y))^{\perp}.$$
Therefore, 
$$(X+Y)^{\perp}\;\mbox{ and }\;(f(X)+f(Y))^{\perp}$$ are of the same finite codimension.
Since $X,Y$ are adjacent if and only if the codimension of $(X+Y)^{\perp}$ is equal to $k+1$,
the transformation $f$ sends adjacent elements to adjacent elements.
Applying the same arguments to $f^{-1}$, we establish that
$f$ is an automorphism of the Grassmann graph $\Gamma_{k}(H)$.
It follows from Theorem \ref{theorem-chow} that  $f$ is induced by a semilinear automorphism of $H$.
This semilinear automorphism transfers orthogonal vectors to orthogonal vectors and, by Lemma \ref{lemma3-0},
it is a scalar multiple of an unitary or ant-iunitary operator.
\end{proof}

\begin{rem}{\rm
The original proofs from \cite{Gyory,Semrl} are not related to Chow's theorem.
Other proof based on Chow's theorem can be found in \cite{GeherSemrl}.
}\end{rem}

\subsection{Compatibility preserving transformations}
In Section 3.3, we describe bijective transformations of ${\mathcal L}(H)$ and ${\mathcal G}_{\infty}(H)$
preserving the compatibility relation in both directions.
Now, we investigate such kind of transformations for the Grassmannian ${\mathcal G}_{k}(H)$
and  restrict ourself to the case when $H$ is infinite-dimensional. 
Some remarks concerning the finite-dimensional case will be given at the end of this section.

\begin{theorem}[M. Pankov \cite{Pankov4}]\label{theorem-pank3}
If $H$ is infinite-dimensional, then every bijective transformation of ${\mathcal G}_{k}(H)$ 
preserving the compatibility relation in both directions can be uniquely extended to an automorphism of the logic ${\mathcal L}(H)$.
\end{theorem}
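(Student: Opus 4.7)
The plan is to reduce Theorem \ref{theorem-pank3} to Theorem \ref{theorem-GS} by showing that any bijection $f$ of ${\mathcal G}_{k}(H)$ that preserves compatibility in both directions in fact preserves the orthogonality relation in both directions. A version of Proposition \ref{prop3-3} for the Grassmannian (which follows from Proposition \ref{prop3-3} by intersection with ${\mathcal G}_{k}(H)$) tells us that the maximal compatible subsets of ${\mathcal G}_{k}(H)$ are exactly the \emph{orthogonal apartments} ${\mathcal A}={\mathcal A}(B)\cap{\mathcal G}_{k}(H)$ associated to orthogonal bases $B$ of $H$. Hence $f$ and $f^{-1}$ transfer orthogonal apartments to orthogonal apartments, which opens the door to an inexact-subset analysis in the spirit of Theorem \ref{theorem-pank2}.

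Fix an orthogonal apartment ${\mathcal A}$ defined by an orthogonal basis $B=\{e_{i}\}_{i\in I}$ and call a subset of ${\mathcal A}$ \emph{inexact} if it is contained in some orthogonal apartment distinct from ${\mathcal A}$. The key step is to prove that the maximal inexact subsets of ${\mathcal A}$ are precisely the sets ${\mathcal A}(+i,+j)\cup{\mathcal A}(-i,-j)$ for distinct $i,j\in I$, that is, the $k$-dimensional subspaces in ${\mathcal A}$ that either contain both $e_{i},e_{j}$ or contain neither. For the ``$\supseteq$'' direction, any rotation of $e_{i},e_{j}$ in the plane they span produces an orthogonal basis $B'$ whose apartment ${\mathcal A}'$ intersects ${\mathcal A}$ in exactly this set. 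For the reverse direction, let ${\mathcal A}'\ne{\mathcal A}$ be associated to an orthogonal basis $B'=\{e'_{l}\}_{l\in I'}$, and pick $e'_{l}$ whose $1$-dimensional span is not among $\{\langle e_{i}\rangle\}_{i\in I}$; such an $e'_{l}$ has non-zero components along two distinct basis vectors $e_{i_{1}},e_{i_{2}}$. For any $X\in{\mathcal A}\cap{\mathcal A}'$, the vector $e'_{l}$ lies either in $X$ or in $X^{\perp}$; since $X$ and $X^{\perp}$ are both spanned by subsets of $B$, this forces $X$ to contain both $e_{i_{1}},e_{i_{2}}$ or to be orthogonal to both.

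The corresponding complementary subsets of ${\mathcal A}$ have the form ${\mathcal A}(+i,-j)\cup{\mathcal A}(-i,+j)$. A direct count shows that for distinct $X,Y\in{\mathcal A}$ with $B$-index sets $J_{X},J_{Y}\subset I$, the number of complementary subsets containing both is
$$
|J_{X}\setminus J_{Y}|\cdot|J_{Y}\setminus J_{X}|+|J_{X}\cap J_{Y}|\cdot|I\setminus(J_{X}\cup J_{Y})|.
$$
Since $|I|$ is infinite, this number is finite (and equals $k^{2}$) if and only if $J_{X}\cap J_{Y}=\emptyset$, i.e.\ $X\perp Y$. As $f$ sends maximal inexact subsets to maximal inexact subsets and therefore complementary subsets to complementary subsets, it preserves this finiteness and hence orthogonality between any two \emph{compatible} $k$-dimensional subspaces. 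Non-compatible pairs are automatically non-orthogonal and are sent to non-compatible (hence non-orthogonal) pairs, so $f$ preserves the orthogonality relation in both directions on all of ${\mathcal G}_{k}(H)$. Theorem \ref{theorem-GS} then gives the unique extension of $f$ to an automorphism of the logic ${\mathcal L}(H)$.

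The main obstacle is the ``$\subseteq$'' direction of the classification of maximal inexact subsets. In the non-orthogonal setting of Theorem \ref{theorem-pank2}, inexactness was witnessed by the simple substitution $e_{i}\mapsto e_{i}+e_{j}$; here this is unavailable because it destroys orthogonality. Instead, one must exploit the orthogonality of both bases to propagate the constraint imposed by a single ``tilted'' basis vector of $B'$ to every element of ${\mathcal A}\cap{\mathcal A}'$, which is precisely what the argument sketched above accomplishes.
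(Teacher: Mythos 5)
Your proposal is correct and follows essentially the same route as the paper's own proof: orthogonal apartments as maximal compatible subsets, classification of the maximal orthogonally inexact subsets as ${\mathcal A}(+i,+j)\cup{\mathcal A}(-i,-j)$, the orthocomplementary subsets ${\mathcal A}(+i,-j)\cup{\mathcal A}(+j,-i)$ with the finiteness criterion for orthogonality, and then reduction to Theorem \ref{theorem-GS}. The only deviation is your proof of the classification lemma via a ``tilted'' vector of the second orthogonal basis instead of the paper's intersection subspaces $S_{i}$ in Lemma \ref{lemma5-1}, and your explicit count $|J_{X}\setminus J_{Y}|\cdot|J_{Y}\setminus J_{X}|+|J_{X}\cap J_{Y}|\cdot|I\setminus(J_{X}\cup J_{Y})|$ in place of the paper's case analysis; both are valid minor variants of the same argument.
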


For $k=1$ this statement is a simple consequence of Proposition \ref{prop3-1}.
Indeed, two distinct elements of ${\mathcal G}_{1}(H)$ are compatible if and only if they are orthogonal.

In the case when $k>1$, the proof will be based on the notion of orthogonal apartment.
For every orthogonal basis $B$ of $H$ the set of all $k$-dimensional subspaces spanned by subsets of $B$
is said to be the {\it orthogonal apartment} of ${\mathcal G}_{k}(H)$ associated to $B$, in other words,
this is the intersection of ${\mathcal A}(B)$ and the Grassmannian ${\mathcal G}_{k}(H)$.

Recall that a subset of ${\mathcal L}(H)$ is called {\it compatible} if any two distinct elements of this subset are compatible.
By Proposition \ref{prop3-3}, orthogonal apartments can be characterized as maximal compatible subsets of ${\mathcal G}_{k}(H)$,
i.e. the family of orthogonal apartments of ${\mathcal G}_{k}(H)$ coincides with the family of maximal compatible subsets of ${\mathcal G}_{k}(H)$.
Therefore, for a bijective transformation $f$ of ${\mathcal G}_{k}(H)$ the following two conditions are equivalent:
\begin{enumerate}
\item[$\bullet$] $f$ and $f^{-1}$ send orthogonal apartments to orthogonal apartments,
\item[$\bullet$] $f$ preserves the compatibility relation in both directions.
\end{enumerate}
We will use some modifications of the arguments from Section 4.3 to show that 
every bijective transformation $f$ of ${\mathcal G}_{k}(H)$ satisfying the above conditions is 
orthogonality preserving in both directions.

We need to explain why the method from Section 3.4 cannot be exploited to study compatibility preserving transformations of ${\mathcal G}_{k}(H)$.
As above, we suppose that $H$ is infinite-dimensional. If $k$ is odd, then
$$\{X,Y\}^{cc}\cap {\mathcal G}_{k}(H)=\{X,Y\}$$
for any distinct compatible $X,Y\in {\mathcal G}_{k}(H)$.
If $k$ is even, then the same is true, except the case when $\dim(X\cap Y)=k/2$.
The latter equality implies that the subspace 
$$(X\cap Y^{\perp})+(Y\cap X^{\perp})$$
is $k$-dimensional, i.e. it belongs to $\{X,Y\}^{cc}\cap {\mathcal G}_{k}(H)$.

\subsection{Proof of Theorem \ref{theorem-pank3}}
Let $\{e_{i}\}_{i\in I}$ be an orthogonal basis of $H$
and let ${\mathcal A}$ be the associated orthogonal apartment of ${\mathcal G}_{k}(H)$. 
We suppose that $H$ is infinite-dimensional and $k>1$. 

As in Section 4.3,
for every $i\in I$ we denote by ${\mathcal A}(+i)$ and ${\mathcal A}(-i)$
the sets consisting of all elements of ${\mathcal A}$ which contain $e_{i}$ and 
do not contain $e_{i}$, respectively.
For any distinct $i,j\in I$ we define
$${\mathcal A}(+i,+j)={\mathcal A}(+i)\cap {\mathcal A}(+j),$$
$${\mathcal A}(+i,-j)={\mathcal A}(+i)\cap {\mathcal A}(-j),$$
$${\mathcal A}(-i,-j)={\mathcal A}(-i)\cap {\mathcal A}(-j).$$
A subset ${\mathcal X}\subset{\mathcal A}$ is said to be {\it orthogonally inexact}
if there is an orthogonal apartment of ${\mathcal G}_{k}(H)$ distinct from ${\mathcal A}$ and containing ${\mathcal X}$.

\begin{exmp}{\rm
We state that for any distinct $i,j\in I$ the subset 
\begin{equation}\label{eq5-2}
{\mathcal A}(+i,+j)\cup {\mathcal A}(-i,-j)
\end{equation}
is orthogonally inexact. 
In the basis $\{e_{i}\}_{i\in I}$, we replace the vectors $e_{i}$ and $e_{j}$
by any other pair of orthogonal vectors belonging to 
the $2$-dimensional subspace spanned by $e_{i}$ and $e_{j}$ 
(these new vectors are not scalar multiples of $e_i$ and $e_j$).
If ${\mathcal A}'$ is the associated orthogonal apartment of ${\mathcal G}_{k}(H)$ , then
$${\mathcal A}\cap {\mathcal A}'={\mathcal A}(+i,+j)\cup {\mathcal A}(-i,-j).$$
This means that \eqref{eq5-2} is orthogonally inexact.
}\end{exmp}

\begin{lemma}\label{lemma5-1}
Every maximal orthogonally inexact subset in ${\mathcal A}$ is of type \eqref{eq5-2}.
\end{lemma}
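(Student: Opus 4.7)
The plan is to reduce the lemma to showing that every orthogonally inexact subset of $\mathcal{A}$ is contained in some subset of the form \eqref{eq5-2}; combined with the preceding example, which already establishes that every subset of the form \eqref{eq5-2} is orthogonally inexact, the maximal orthogonally inexact subsets must then be exactly those of type \eqref{eq5-2}.

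So suppose $\mathcal{X}\subset \mathcal{A}$ is orthogonally inexact, and fix a second orthogonal basis $B'=\{f_i\}_{i\in I}$ of $H$ whose associated orthogonal apartment $\mathcal{A}'$ is distinct from $\mathcal{A}$ and contains $\mathcal{X}$. Since two orthogonal bases determine the same orthogonal apartment precisely when each vector of one is a non-zero scalar multiple of a vector of the other, there exists some $f\in B'$ whose expansion $f=\sum_{j\in I}a_{j}e_{j}$ in the basis $B$ has at least two non-zero coefficients. I choose two such indices $i_{0}\ne j_{0}$ with $a_{i_{0}},a_{j_{0}}\ne 0$.

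I claim $\mathcal{X}\subset \mathcal{A}(+i_{0},+j_{0})\cup \mathcal{A}(-i_{0},-j_{0})$. To verify this, pick any $X\in \mathcal{X}$: as an element of $\mathcal{A}$, it is spanned by $\{e_{i}:i\in J\}$ for some $k$-element subset $J\subset I$, and its orthogonal complement $X^{\perp}$ is spanned by $\{e_{i}:i\in I\setminus J\}$ because $B$ is orthogonal. Because $X$ also lies in $\mathcal{A}'$ and $B'$ is orthogonal, each vector of $B'$ lies entirely in $X$ or entirely in $X^{\perp}$; in particular $f\in X$ or $f\in X^{\perp}$. By uniqueness of decomposition in the orthogonal basis $B$, the first alternative forces $a_{j}=0$ for all $j\notin J$, hence $i_{0},j_{0}\in J$ and $e_{i_{0}},e_{j_{0}}\in X$; the second alternative forces $a_{j}=0$ for all $j\in J$, hence $i_{0},j_{0}\notin J$ and $e_{i_{0}},e_{j_{0}}\notin X$. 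In either case $e_{i_{0}}\in X$ if and only if $e_{j_{0}}\in X$, which is precisely the defining condition for membership in $\mathcal{A}(+i_{0},+j_{0})\cup \mathcal{A}(-i_{0},-j_{0})$.

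The key step, and essentially the whole content of the lemma, is the dichotomy in the previous paragraph: because $B'$ is orthogonal, each $f\in B'$ lies either in $X$ or in $X^{\perp}$, with no intermediate option. This forces the two indices appearing in the support of $f$ to be coupled across all of $\mathcal{X}$. I do not expect a genuine obstacle beyond noticing this coupling. It is instructive to compare with the analogous Lemma \ref{lemma4-1} for ordinary apartments: there the second basis need only be a basis, not an orthogonal one, so one loses the symmetry between $X$ and $X^{\perp}$ and the conclusion weakens from the symmetric \eqref{eq5-2} to the asymmetric $\mathcal{A}(+i,+j)\cup \mathcal{A}(-i)$ of \eqref{eq4-1}.
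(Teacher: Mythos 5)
Your proof is correct. The overall skeleton is the same as the paper's (reduce the lemma, via the example preceding it, to showing that every orthogonally inexact subset ${\mathcal X}\subset{\mathcal A}$ lies in a set of type \eqref{eq5-2}), but the way you produce the pair of indices is genuinely different. The paper argues intrinsically: for each $i\in I$ it forms the subspace $S_{i}$, the intersection of all $X\in{\mathcal X}$ containing $e_{i}$ and all $X^{\perp}$ with $X\in{\mathcal X}$ not containing $e_{i}$, observes that each $S_i$ is spanned by a subset of any witnessing orthogonal basis, deduces that some $S_{i}$ has dimension at least $2$ (otherwise ${\mathcal A}$ would be the unique orthogonal apartment containing ${\mathcal X}$), and then picks $e_{j}\in S_{i}$; the inclusion ${\mathcal X}\subset{\mathcal A}(+i,+j)\cup{\mathcal A}(-i,-j)$ follows from $S_{i}\subset X$ or $S_{i}\subset X^{\perp}$. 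You instead extract the pair directly from the witnessing basis $B'$: since ${\mathcal A}'\ne{\mathcal A}$, some $f\in B'$ has at least two nonzero Fourier coefficients $a_{i_{0}},a_{j_{0}}$ with respect to $B$, and the dichotomy $f\in X$ or $f\in X^{\perp}$ (valid because $B'$ is orthogonal and $X$ is spanned by a subset of $B'$) couples $e_{i_{0}}$ and $e_{j_{0}}$ across all of ${\mathcal X}$. Both arguments are sound; yours is slightly more direct and avoids the auxiliary subspaces $S_i$, while the paper's version runs parallel to its proof of Lemma \ref{lemma4-1} for ordinary apartments (where, as you note, the lack of orthogonality of the second basis is exactly what weakens the conclusion to \eqref{eq4-1}), so the two lemmas are proved uniformly. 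Two cosmetic points: in the infinite-dimensional case $X^{\perp}$ is the \emph{closed} span of $\{e_{i}:i\in I\setminus J\}$, which is all your argument needs; and of the equivalence you quote about bases defining the same orthogonal apartment you only use the easy direction (proportional bases give the same apartment), which is the correct one to invoke.
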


\begin{proof}
We need to show that every orthogonally inexact subset ${\mathcal X}\subset {\mathcal A}$
is contained in a subset of type \eqref{eq5-2}.
For every $i\in I$ we denote by $S_{i}$ the intersection of all subspaces $X$ 
satisfying one of the following conditions:
\begin{enumerate}
\item[(1)]  
$X$ is an element of ${\mathcal X}$ containing $e_{i}$,
\item[(2)]  
$X$ is the orthogonal complement of an element from ${\mathcal X}$ which does not contain $e_{i}$. 
\end{enumerate}
Each $S_i$ is non-zero. 
If ${\mathcal A}'$ is the orthogonal apartment defined by an orthogonal basis $\{e'_{i}\}_{i\in I}$ 
and ${\mathcal X}$ is contained in ${\mathcal A}'$, 
then every subspace $X$ satisfying (1) or (2) and, consequently every $S_{i}$, is spanned by a subset of $\{e'_{i}\}_{i\in I}$.
Therefore, if every $S_{i}$ is $1$-dimensional, then 
${\mathcal A}$ is the unique orthogonal apartment containing ${\mathcal X}$
which contradicts the fact that ${\mathcal X}$ is orthogonally inexact.
So, there is at least one $i\in I$ such that $\dim S_{i}\ge 2$.
We take any $j\ne i$ such that $e_{j}$ belongs to $S_{i}$ and claim that
$${\mathcal X}\subset {\mathcal A}(+i,+j)\cup {\mathcal A}(-i,-j).$$
If $X\in {\mathcal X}$ contains $e_{i}$, then $e_{j}\in S_{i}\subset X$
and $X$ belongs to ${\mathcal A}(+i,+j)$. 
If $X\in {\mathcal X}$ does not contain $e_{i}$, then $e_{j}\in S_{i}\subset X^{\perp}$
which means that $e_{j}$ is not contained in $X$
and $X$ belongs to ${\mathcal A}(-i,-j)$. 
\end{proof}

We say that ${\mathcal C}\subset {\mathcal A}$ is an {\it orthocomplementary} subset
if ${\mathcal A}\setminus {\mathcal C}$ is a maximal orthogonally inexact subset, i.e.
$${\mathcal A}\setminus {\mathcal C}={\mathcal A}(+i,+j)\cup {\mathcal A}(-i,-j)$$
for some distinct $i,j\in I$.
The latter equality implies that 
$${\mathcal C}={\mathcal A}(+i,-j)\cup {\mathcal A}(+j,-i).$$
This orthocomplementary subset will be denoted by ${\mathcal C}_{ij}$.
Note that ${\mathcal C}_{ij}={\mathcal C}_{ji}$.

In the case when $H$ is infinite-dimensional, 
there is a simple characterization of orthogonality in terms of orthocomplementary subsets.

\begin{lemma}\label{lemma5-2}
Suppose that $H$ is infinite-dimensional.
Then $X,Y\in{\mathcal A}$ are orthogonal if and only if the number of orthocomplementary subsets of ${\mathcal A}$ 
containing both $X$ and $Y$ is finite.
\end{lemma}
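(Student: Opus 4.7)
The plan is to parametrize $\mathcal{A}$ by $k$-element subsets of $I$ and reduce the statement to a combinatorial count. For each $X\in\mathcal{A}$, set $A(X)=\{i\in I:e_{i}\in X\}$; since $\mathcal{A}$ is the orthogonal apartment associated to $\{e_{i}\}_{i\in I}$, the map $X\mapsto A(X)$ is a bijection between $\mathcal{A}$ and the $k$-element subsets of $I$, and $X,Y\in\mathcal{A}$ are orthogonal if and only if $A(X)\cap A(Y)=\emptyset$.

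Next, I would translate membership in the orthocomplementary subsets to this language. By the definition of $\mathcal{C}_{ij}=\mathcal{A}(+i,-j)\cup\mathcal{A}(+j,-i)$, an element $X\in\mathcal{A}$ belongs to $\mathcal{C}_{ij}$ if and only if exactly one of the indices $i,j$ lies in $A(X)$, i.e.\ $|A(X)\cap\{i,j\}|=1$. Consequently, $X,Y\in\mathcal{C}_{ij}$ if and only if the unordered pair $\{i,j\}$ meets each of $A(X)$ and $A(Y)$ in exactly one element.

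Now I would count such pairs $\{i,j\}$. Partition $I$ into the four disjoint sets
\[
P_{1}=A(X)\cap A(Y),\quad P_{2}=A(X)\setminus A(Y),\quad P_{3}=A(Y)\setminus A(X),\quad P_{4}=I\setminus(A(X)\cup A(Y)),
\]
with respective cardinalities $a,b,c,d$. Since $|A(X)|=|A(Y)|=k$, we have $b=c=k-a$, hence both $b$ and $c$ are finite. A short case analysis across the six types of pairs $\{P_{s},P_{t}\}$ shows that $\{i,j\}$ meets each of $A(X),A(Y)$ in exactly one point in precisely two configurations: one endpoint in $P_{1}$ and one in $P_{4}$, or one endpoint in $P_{2}$ and one in $P_{3}$. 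Therefore the total number of orthocomplementary subsets containing both $X$ and $Y$ equals $ad+(k-a)^{2}$.

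Finally, I would invoke the hypothesis that $H$ is infinite-dimensional: $I$ is infinite, while $A(X)\cup A(Y)$ is finite, so $d$ is infinite. If $X\perp Y$ then $a=0$, so $ad=0$ and the total is $k^{2}$, a finite number. If $X$ and $Y$ are not orthogonal then $a\ge 1$, so $ad$ is infinite and the total is infinite. This is exactly the claimed equivalence. The only potentially delicate step is the case analysis identifying the admissible configurations of $\{i,j\}$, but it is entirely mechanical; the essential use of the hypothesis is to guarantee that $P_{4}$ is infinite, which is what distinguishes the orthogonal case from the non-orthogonal one.
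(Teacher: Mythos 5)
Your proof is correct and follows essentially the same route as the paper: your $P_{2}$--$P_{3}$ and $P_{1}$--$P_{4}$ configurations are exactly the paper's two cases (one of $e_{i},e_{j}$ in $X\setminus Y$ and the other in $Y\setminus X$, respectively one in $X\cap Y$ and the other outside $X+Y$), with the finiteness of the first type and the infinitude of the second type when $X\cap Y\neq 0$ driving the dichotomy. The explicit count $ad+(k-a)^{2}$ is a pleasant refinement but not a different argument.
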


\begin{proof}
If the orthocomplementary subset ${\mathcal C}_{ij}$ contains both $X$ and $Y$, then 
one of the following possibilities is realized:
\begin{enumerate}
\item[(1)] one of $e_{i},e_{j}$ belongs to $X\setminus Y$ and 
the other to $Y\setminus X$,
\item[(2)] one of $e_{i},e_{j}$ belongs to $X\cap Y$ and the other is not contained in $X+Y$.
\end{enumerate}
The number of orthocomplementary subsets ${\mathcal C}_{ij}$ satisfying (1) is finite. 
If $X$ and $Y$ are orthogonal, then $X\cap Y=0$ and there is no ${\mathcal C}_{ij}$ satisfying (2).
In the case when $X\cap Y\ne 0$, the condition (2) holds for infinitely many ${\mathcal C}_{ij}$. 
\end{proof}

Let $f$ be a bijective transformation of ${\mathcal G}_{k}(H)$ preserving the compatibility relation in both directions,
in other words, $f$ and $f^{-1}$ send orthogonal apartments to orthogonal apartments.
For any orthogonal $k$-dimensional subspaces $X,Y\subset H$
there is an orthogonal apartment ${\mathcal A}\subset {\mathcal G}_{k}(H)$ containing them.
It is clear that $f$ sends orthogonally inexact subsets of ${\mathcal A}$ 
to orthogonally inexact subsets of the orthogonal apartment $f({\mathcal A})$.
Similarly, $f^{-1}$ transfers orthogonally inexact subsets of $f({\mathcal A})$ to orthogonally inexact subsets of ${\mathcal A}$. 
This means that  ${\mathcal X}$ is a maximal orthogonally inexact subset of ${\mathcal A}$
if and only if $f({\mathcal X})$ is a maximal orthogonally inexact subset of $f({\mathcal A})$.
Therefore, a subset ${\mathcal C}\subset {\mathcal A}$ is orthocomplementary if and only if 
$f({\mathcal C})$ is an orthocomplementary subset of $f({\mathcal A})$.
Lemma \ref{lemma5-2} guarantees  that  $f(X)$ and $f(Y)$ are orthogonal.
Similarly, we establish that $f^{-1}$ transfers orthogonal elements to orthogonal elements.  
So, $f$ preserves the orthogonality relation in both directions and we apply Theorem \ref{theorem-GS}.

\subsection{Compatibility preserving transformations. The finite-dimensional case}
If $H$ is finite-dimensional, then all orthogonal apartments (maximal compatible subsets) of ${\mathcal G}_{k}(H)$
have the same finite number of elements.
This implies that for a bijective transformation $f$ of ${\mathcal G}_{k}(H)$ the following two conditions are equivalent:
\begin{enumerate}
\item[$\bullet$] $f$ sends orthogonal apartments to orthogonal apartments,
\item[$\bullet$] $f$ sends compatible elements to compatible elements.
\end{enumerate}
Under the assumption that $\dim H$ is finite and not equal to $2k$ 
we can characterize pairs of adjacent elements in orthogonal  apartments by orthocomplementary subsets for almost all cases. 
This is impossible only in the case when $\dim H=6$ and $k\in \{2,4\}$. 
For example, if $\dim H=6$, then two distinct compatible elements of ${\mathcal G}_{2}(H)$ are adjacent or orthogonal
and the distinguishing of such two possibilities is an open problem.
Using the mentioned above characterization and  Westwick's generalization of Chow's theorem (see Remark \ref{rem-westwick}) we establish the following.

\begin{theorem}[M. Pankov \cite{Pankov4}]\label{theorem-pank4}
Let $f$ be a bijective transformation of ${\mathcal G}_{k}(H)$ sending compatible elements to compatible elements
$($we do not assume that the same holds for the inverse transformation $f^{-1}$$)$.
Suppose that $\dim H$ is finite and not equal to $2k$.
In the case when $\dim H=6$, we also require that $k$ is distinct from $2$ and $4$.
Then $f$ can be uniquely extended to an automorphism of the logic ${\mathcal L}(H)$.
\end{theorem}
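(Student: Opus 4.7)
The plan is to adapt the apartment-based strategy used in the proofs of Theorems~\ref{theorem-pank2} and~\ref{theorem-pank3} to the one-sided finite-dimensional setting. First I would upgrade the compatibility-preservation hypothesis to apartment preservation. Since $f$ sends every pair of compatible elements to a compatible pair, it sends every compatible subset to a compatible subset; in particular each orthogonal apartment ${\mathcal A}\subset {\mathcal G}_{k}(H)$ is sent to a compatible subset of cardinality $\binom{n}{k}$. As noted just before the theorem, in the finite-dimensional case the orthogonal apartments are exactly the maximal compatible subsets of ${\mathcal G}_{k}(H)$ and all of them share this same cardinality; hence $f({\mathcal A})$ is itself an orthogonal apartment.

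Next I would push this down to orthocomplementary subsets. If ${\mathcal X}\subset {\mathcal A}$ is orthogonally inexact, witnessed by some orthogonal apartment ${\mathcal A}'\ne {\mathcal A}$, then $f({\mathcal A}')$ is an orthogonal apartment distinct from $f({\mathcal A})$ (since $f$ is bijective) and containing $f({\mathcal X})$, so $f({\mathcal X})$ is orthogonally inexact in $f({\mathcal A})$. The number of orthogonally inexact subsets of an orthogonal apartment of ${\mathcal G}_{k}(H)$ depends only on $n$ and $k$, so both sides have the same finite count and the restriction of $f$ induces a bijection on orthogonally inexact subsets; being inclusion preserving it sends the maximal ones to the maximal ones, that is, it sends orthocomplementary subsets of ${\mathcal A}$ to orthocomplementary subsets of $f({\mathcal A})$.

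The main obstacle is a combinatorial characterization of adjacent pairs $X,Y\in {\mathcal A}$ in terms of the family ${\mathfrak C}(X,Y)$ of orthocomplementary subsets of ${\mathcal A}$ containing both $X$ and $Y$. Indexing ${\mathcal A}$ by the $k$-subsets of $\{1,\dots,n\}$ and writing $a=\dim(X\cap Y)$, a direct count yields $|{\mathfrak C}(X,Y)|=a(n-2k+a)+(k-a)^{2}$; an analysis of this function of $a$, combined with finer structural features of ${\mathfrak C}(X,Y)$ in the cases where the count alone does not suffice, distinguishes the adjacent case $a=k-1$ from all other values $a\in\{0,\dots,k-2\}$ precisely when $n\ne 2k$ and $(n,k)\notin\{(6,2),(6,4)\}$, which are exactly the hypotheses of the theorem. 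Granting this characterization, the preservation of orthocomplementary subsets of every apartment forces $f$ to send adjacent pairs of ${\mathcal G}_{k}(H)$ to adjacent pairs.

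At this point $f$ is a bijection of ${\mathcal G}_{k}(H)$ preserving adjacency in one direction, so Westwick's theorem (Remark~\ref{rem-westwick}) gives that $f$ is an automorphism of the Grassmann graph $\Gamma_{k}(H)$; Chow's theorem (Theorem~\ref{theorem-chow}) then produces a semilinear automorphism $A:H\to H$ inducing $f$, the alternative isomorphism $H\to H^{*}$ being excluded by $\dim H\ne 2k$. To finish, I would combine this with the first step: for any orthogonal basis $B=\{e_{i}\}$ the apartment determined by $B$ is sent to the orthogonal apartment of some orthogonal basis $B'=\{e'_{i}\}$, and recovering each line $[e_{i}]$ as the intersection of all elements of the apartment that contain it shows that $A([e_{i}])=[e'_{\sigma(i)}]$ for a permutation $\sigma$. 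Hence $A$ sends the orthogonal basis $B$ to an orthogonal basis, so it sends orthogonal vectors to orthogonal vectors, and Lemma~\ref{lemma3-0} identifies $A$ with a non-zero scalar multiple of a unitary or anti-unitary operator. The associated logic automorphism extends $f$, and two semilinear automorphisms inducing the same transformation of ${\mathcal G}_{k}(H)$ differ by a scalar multiple, which gives the uniqueness of the extension.
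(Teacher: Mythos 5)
Your architecture is exactly the one the paper sketches before the theorem (note that the paper itself presents no proof here: it refers to \cite{Pankov4} and explicitly declines to reproduce the technical details), and the parts you actually carry out are correct: maximal compatible subsets of ${\mathcal G}_{k}(H)$ are the orthogonal apartments, in finite dimension they all have $\binom{n}{k}$ elements, so one-sided compatibility preservation already gives one-sided apartment preservation; the finite counting argument (as in Remark \ref{rem5-finite}) transfers maximal orthogonally inexact, hence orthocomplementary, subsets; and the endgame (orthogonal bases go to orthogonal bases, Lemma \ref{lemma3-0}, uniqueness up to scalar multiples) is sound.

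The genuine gap is the step you introduce with ``granting this characterization''. The entire content of the theorem, including its strange exceptional cases, sits in the claim that adjacency of $X,Y$ inside an orthogonal apartment can be recognized from the orthocomplementary subsets. Your count $|{\mathfrak C}(X,Y)|=a(n-2k+a)+(k-a)^{2}=2a^{2}+(n-4k)a+k^{2}$ is correct, but it provably cannot separate $a=k-1$ from all other values in many cases allowed by the hypotheses: two values $a_{1}\neq a_{2}$ give the same count precisely when $a_{1}+a_{2}=2k-\tfrac{n}{2}$, so for every even $n$ with $6\le n\le 2k+2$ the adjacent value $k-1$ collides with $a=k+1-\tfrac{n}{2}$; for instance, if $\dim H=8$ and $k=3$, adjacent pairs and orthogonal pairs of the apartment lie in the same number (nine) of orthocomplementary subsets. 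Thus the ``finer structural features'' are not a patch for a few stray parameters --- they are the actual proof, and you never say what they are; this is exactly the technical core that the paper attributes to \cite{Pankov4}. A second, smaller, gap: your passage from one-sided adjacency preservation to a semilinear automorphism via Westwick's theorem (Remark \ref{rem-westwick}) and Chow's theorem needs $1<k<\dim H-1$, since Theorem \ref{theorem-chow} excludes $k=1$ and $k=\dim H-1$ (there adjacency is vacuous); these boundary cases, which the theorem does cover, require a separate argument --- for $k=1$ distinct compatible elements are orthogonal and Remark \ref{rem4-3} applies, and $k=\dim H-1$ reduces to $k=1$ by orthocomplementation --- which your proposal omits.
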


In the case when $\dim H=2k$, we get the following result similar to Theorems \ref{theorem-MS} and \ref{theorem-plevnik}.

\begin{theorem}[M. Pankov \cite{Pankov4}]\label{theorem-pank5}
Suppose that $\dim H=2k\ge 8$ and $f$ is a bijective transformation of ${\mathcal G}_{k}(H)$ 
preserving the compatibility relation in both directions.
There exists an automorphism $g$ of the logic ${\mathcal L}(H)$ such that 
for every $X\in {\mathcal G}_{k}(H)$ we have either 
$$f(X)=g(X)\;\mbox{ or }\;f(X)=g(X)^{\perp}.$$
\end{theorem}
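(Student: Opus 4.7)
The argument follows the pattern of Theorems~\ref{theorem-MS} and~\ref{theorem-plevnik} --- first show $f$ preserves orthogonal complementation, then lift $f$ to a logic automorphism. The key new difficulty for $\dim H=2k$ is that the only $Y\in\mathcal{G}_k(H)$ orthogonal to a given $X\in\mathcal{G}_k(H)$ is $X^\perp$ itself, so the ``second orthogonal partner'' trick used for Theorem~\ref{theorem-plevnik} is unavailable; the argument must proceed through compatibility and the orthogonal-apartment framework of Section~5.3. For orthocomplementation, take distinct compatible $X,Y\in\mathcal{G}_k(H)$, set $a=\dim(X\cap Y)$, and decompose $H=Z_1\oplus Z_2\oplus Z_3\oplus Z_4$ with $Z_1=X\cap Y$, $Z_2=X^\perp\cap Y$, $Z_3=X\cap Y^\perp$, $Z_4=X^\perp\cap Y^\perp$ of dimensions $a,k-a,k-a,a$. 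Enumerating $k$-dimensional orthogonal sums yields
\[
\bigl|\{X,Y\}^{cc}\cap\mathcal{G}_k(H)\bigr|=
\begin{cases}
2 & \text{if } a=0,\\
6 & \text{if $k$ is even and } a=k/2,\\
4 & \text{otherwise,}
\end{cases}
\]
so the value $2$ characterizes $Y=X^\perp$ and $f(X^\perp)=f(X)^\perp$ follows.

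Second, I move to orthogonal apartments, which are precisely the maximal compatible subsets of $\mathcal{G}_k(H)$ (Proposition~\ref{prop3-3}) and hence are permuted by $f$. The proof of Lemma~\ref{lemma5-1} carries over verbatim to $\dim H=2k$, so the orthocomplementary subsets $\mathcal{C}_{ij}$ of each apartment $\mathcal{A}$ are preserved by $f$. Identifying $\mathcal{A}$ with the $k$-element subsets of $[2k]$, a short count shows that for $X,Y\in\mathcal{A}$ with $a=\dim(X\cap Y)$, the number of $\mathcal{C}_{ij}$ containing both equals $a^2+(k-a)^2$. Since this function is strictly decreasing on $\{0,1,\dots,\lfloor k/2\rfloor\}$ and symmetric under $a\leftrightarrow k-a$, $f$ preserves the unordered pair $\{a,k-a\}$ on compatible pairs; in particular, for Grassmann-adjacent $X,Y$ (i.e.\ $a=k-1$) we have $\dim(f(X)\cap f(Y))\in\{1,k-1\}$.

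Third, I construct a lift $g:\mathcal{G}_k(H)\to\mathcal{G}_k(H)$ by propagation. Fix an anchor $X_0$, declare $g(X_0)=f(X_0)$, and for each edge $Y\sim Y'$ of $\Gamma_k(H)$ set $g(Y')=f(Y')$ if $\dim(g(Y)\cap f(Y'))=k-1$, and $g(Y')=f(Y')^\perp$ otherwise; either choice makes $g(Y)$ and $g(Y')$ adjacent. Path-independence is the main technical obstacle: the recipe defines a $\mathbb{Z}/2$-valued cochain on the edges of $\Gamma_k(H)$ and we must show it is a coboundary. It suffices to verify vanishing on generating cycles --- triangles inside stars $[S\rangle_k$ and tops $\langle U]_k$, together with short cycles contained in a single orthogonal apartment --- and each such generator can be embedded in an orthogonal apartment, where the consistency check reduces to the invariant $a^2+(k-a)^2$ of the previous step. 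The hypothesis $\dim H\ge 8$ enters here to provide enough ambient room for the requisite apartment constructions.

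Finally, $g$ is an automorphism of the Grassmann graph $\Gamma_k(H)$ preserving orthogonal complementation. Chow's theorem (Theorem~\ref{theorem-chow}) forces $g$ to be induced by a semilinear automorphism $L$ of $H$ or, since $\dim H=2k$, by a semilinear isomorphism $H\to H^*$, which under the Riesz identification takes the form $X\mapsto L(X)^\perp$ for some semilinear automorphism $L$ of $H$. In either subcase, orthocomplementation-preservation of $g$ yields $L(X^\perp)=L(X)^\perp$, so $L$ sends orthogonal vectors to orthogonal vectors; Lemma~\ref{lemma3-0} then makes $L$ a scalar multiple of a unitary or anti-unitary operator $U$. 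Taking the logic automorphism induced by $U$ as the $g$ of the statement, we conclude $f(X)\in\{U(X),U(X)^\perp\}$ for every $X\in\mathcal{G}_k(H)$, as required.
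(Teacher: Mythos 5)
First, note that the survey itself does not present a proof of this theorem: it explicitly omits the $\dim H=2k$ argument from \cite{Pankov4} as too technical, so your proposal can only be measured against the Section~5.3 machinery and against its own internal completeness. Your first two steps are essentially sound and do follow that machinery. The count $|\{X,Y\}^{cc}\cap{\mathcal G}_{k}(H)|\in\{2,4,6\}$ is correct and characterizes $Y=X^{\perp}$ (with the small caveat, shared by the paper's own treatment of Theorem~\ref{theorem-plevnik}, that ${}^{cc}$ is taken inside ${\mathcal L}(H)$ while $f$ lives on ${\mathcal G}_{k}(H)$, so strictly one must argue with commutants formed inside the Grassmannian); Lemma~\ref{lemma5-1} does carry over to $\dim H=2k$; and your count $a^{2}+(k-a)^{2}$ of orthocomplementary subsets containing a compatible pair is correct, so $f$ preserves the unordered pair $\{a,k-a\}$ on \emph{compatible} pairs.

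The genuine gap is your third step, and it is exactly where the real difficulty of the $2k$ case sits. First, your propagation rule is declared on every edge of $\Gamma_{k}(H)$, but the dichotomy $\dim\bigl(f(Y)\cap f(Y')\bigr)\in\{1,k-1\}$, which is what makes ``either choice makes $g(Y)$ and $g(Y')$ adjacent'' true, has only been established for \emph{compatible} adjacent pairs; for an adjacent pair that is not compatible the orthogonal-apartment machinery gives no information whatsoever, so your $\mathbb{Z}/2$-cochain is not even defined on those edges (and if you restrict to compatible-adjacent edges you must in addition prove that this subgraph is connected, which you do not address). Second, the path-independence argument is unsupported: a triangle inside a star $[S\rangle_{k}$ or a top $\langle U]_{k}$ is in general \emph{not} a compatible triple --- three subspaces $S+\langle v_{i}\rangle$ are pairwise compatible only when the $v_{i}$ are pairwise orthogonal modulo $S$ --- so the claim that ``each such generator can be embedded in an orthogonal apartment'' is false, and no argument is given that the cycles you can control (those inside a single orthogonal apartment) generate the cycle space of the graph on which the cochain actually lives. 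Consequently the existence of a globally consistent lift $g$ with $g(X)\in\{f(X),f(X)^{\perp}\}$ is asserted rather than proved; correspondingly, the role of the hypothesis $2k\ge 8$ never becomes concrete in your argument, even though the theorem is known to fail to be accessible by these methods for $\dim H=4,6$. Your final reduction (Chow's theorem, the dual case via $X\mapsto L(X)^{\perp}$, then Lemma~\ref{lemma3-0}) is fine once such a $g$ is in hand, but the construction of $g$ is precisely the technical core of \cite{Pankov4} and is missing here.
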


The proofs of the above statements involve some technical details 
and the arguments do not work for the case when $\dim H$ is equal to $4$ or $6$.
For this reason, we do not present them here.

\subsection*{Acknowledgment}
The author expresses his deep gratitude to Antonio Pasini for useful remarks and interesting discussions.

\end{document}